\documentclass[12pt, reqno]{amsart}
\usepackage{amssymb}
\usepackage{amsmath}
\usepackage[dvips]{graphicx}
\usepackage{amsthm}
\usepackage{hyperref}
\usepackage[normalem]{ulem}
\usepackage[capitalize,nameinlink,noabbrev,nosort]{cleveref}
\usepackage[dvipsnames]{xcolor}
\usepackage{mathtools}
\usepackage{enumitem}
\usepackage[headings]{fullpage}
\usepackage{ytableau}
\usepackage{yhmath}
\usepackage{mathdots}
\usepackage{calc}
\usepackage{ytableau}

\usepackage {tikz}
\usetikzlibrary {positioning, calc}
\definecolor {processblue}{cmyk}{0.96,0,0,0}

\usepackage{accents}

\usepackage{fullpage}

\usepackage{tikz}
\usetikzlibrary{arrows}
\usetikzlibrary{decorations.markings}
\usetikzlibrary{tikzmark,decorations.pathreplacing}
\usetikzlibrary{shapes.geometric}
\numberwithin{equation}{section}
\newcommand{\colsSep}{~\mathbin{\raisebox{-.18em}{$\vdots$}}~}
\newcommand{\matrixCols}[2]{#1 \colsSep #2}
\newlength{\firstlen}
\newlength{\secondlen}
\newlength{\maxlen}
\newcommand{\matrixRows}[2]{%
\begin{tikzpicture}
    \settowidth{\firstlen}{$#1$}
    \settowidth{\secondlen}{$#2$}

    \ifdim\firstlen>\secondlen
        \setlength{\maxlen}{\firstlen}
    \else
        \setlength{\maxlen}{\secondlen}
    \fi

    \node (first) at (0,0) {$#1$};
    \node (dots) at (0,.75em) {\customdotfill{\maxlen}};
    \node (second) at (0,1.5em) {$#2$};
\end{tikzpicture}%
}
\makeatletter
\newcommand{\customdotfill}[1]{%
  \leavevmode
  \leaders\hbox to .44em{\hss.\hss}\hskip #1\kern0pt
}
\makeatother

\renewcommand{\sp}{sp}
\newcommand{\oo}{so}

\renewcommand{\oe}{o^{\textup{even}}}
\newcommand{\Z}{\mathbb{Z}}

\DeclareMathOperator{\sgn}{sgn}
\DeclareMathOperator{\inv}{inv}

\newcommand{\x}{\bar{x}}

\newtheorem{thm}{Theorem}[section]

\newtheorem{cor}[thm]{Corollary}
\newtheorem{lem}[thm]{Lemma}
\newtheorem{defn}[thm]{Definition}
\newtheorem{rem}[thm]{Remark}

\crefname{thm}{Theorem}{Theorems}

\title[Affine Jacobi--Trudi identities]{Character expansions and\\ affine Jacobi--Trudi identities}

\author[]{Ilse Fischer}

\author[]{Moritz Gangl}

\author[]{{\'A}lvaro Guti\'errez}

\author[]{Nishu Kumari}

\address{University of Vienna}
\email[I.~Fischer]{ilse.fischer@univie.ac.at}
\email[M.~Gangl]{moritz.gangl@univie.ac.at}
\email[N.~Kumari]{nishu.kumari@univie.ac.at}
\address{University of Bristol}
\email[Á.~Gutiérrez]{a.gutierrezcaceres@bristol.ac.uk}

\thanks{IF, MG and NK acknowledge the financial support from the Austrian Science Fund (FWF) grant 10.55776/F1002. MG also acknowledges the financial support from the Austrian Science Fund (FWF) grant 10.55776/P34931. ÁG was supported by a University of Bristol Research Training Support Grant, and is grateful for the hospitality of the University of Vienna during the research visit where this project took place.}

\begin{document}
\begin{abstract}
    We provide character expansions of certain specialisations of multiparameter Hall--Littlewood polynomials of types $\mathrm{B}_n$, $\mathrm{C}_n$, and $\mathrm{BC}_n$ for rectangular shapes. We use these expansions to establish the six  Jacobi--Trudi-type identities conjectured by Ole Warnaar in~2025.
\end{abstract}
\maketitle

\section{Introduction}\label{sec:introduction}\label{sec:main results}

The classical Jacobi--Trudi identities \cite{Jacobi1841,Trudi1864} express
Schur polynomials as determinants whose entries are complete homogeneous
or elementary symmetric polynomials. For a partition $\lambda$ and an
alphabet $X=(x_1,\ldots,x_n)$, they take the form
\begin{equation}\label{eq: dual JT type A}
 s_\lambda(X)
 =
 \det_{1\leq i,j\leq\ell(\lambda)}
 \bigl(h_{\lambda_i-i+j}(X)\bigr)
 =
 \det_{1\leq i,j\leq\ell(\lambda')}
 \bigl(e_{\lambda'_i-i+j}(X)\bigr),
\end{equation}
where $\lambda'$ is the conjugate partition,
$h_r(X)=s_{(r)}(X)$, and $e_r(X)=s_{(1^r)}(X)$.
We use the conventions $h_0=e_0=1$ and $h_r=e_r=0$ for $r<0$.
The second equality in \eqref{eq: dual JT type A} is usually called
the dual Jacobi--Trudi identity.

Schur polynomials indexed by partitions of length at most $n$ are the
characters of the irreducible polynomial representations of
$\mathrm{GL}_n$. Analogous determinant formulas are
available for the symplectic and orthogonal characters
\cite{KoikeTerada,FulHar91,FulmekKrattenthaler}.
We denote the odd orthogonal, symplectic, and even orthogonal characters
by $\oo_\lambda$, $\sp_\lambda$, and $\oe_\lambda$, respectively, and set
$\overline X=(x_1^{-1},\ldots,x_n^{-1})$ and $\dot e_r=e_r(X,\overline X)$.
For $k\geq1$ and $\lambda\subseteq(k^n)$, the dual Jacobi--Trudi
identities are
\begin{subequations}\label{eq:intro-classical-characters}
\begin{align}
    \oo_\lambda(X) &= \det\limits_{1\le i,j\le k}(\dot e_{\lambda'_i-i+j}+ \dot e_{\lambda'_i-i-j+1}),\tag{dJT $\operatorname{B}_n$}\label{eq: dual JT type B}\\
    \sp_\lambda(X) &= \det\limits_{1\le i,j\le k}(\dot e_{\lambda'_i-i+j}-\dot  e_{\lambda'_i-i-j}),\tag{dJT $\operatorname{C}_n$}\label{eq: dual JT type C}\\
    \oe_\lambda(X) &= \frac{1}{2}\det\limits_{1\le i,j\le k}(\dot e_{\lambda'_i-i+j} +\dot e_{\lambda'_i-i-j+2}).\tag{dJT $\operatorname{D}_n$}\label{eq: dual JT type D}
\end{align}
\end{subequations}
Here $\dot e_0=1$ and $\dot e_r=0$ for $r<0$ or $r>2n$.
These identities provide a useful connection between symmetric functions,
classical characters, and determinant evaluations. Their combinatorial
proofs use tableaux and nonintersecting lattice paths; see
\cite{FulmekKrattenthaler,AFHS}.

In this paper, we establish six generalisations of these formulas for
specialisations of the multiparameter Hall--Littlewood polynomials of
types $\mathrm{B}_n$, $\mathrm{C}_n$, and $\mathrm{BC}_n$, indexed by
rectangular partitions $(k^n)$. Hall--Littlewood polynomials associated
with root systems generalise the corresponding classical characters
\cite{Macdonald00}. The three families needed here can all be obtained
from the $\mathrm{BC}_n$ Hall--Littlewood polynomials.

For a partition $\lambda=(\lambda_1,\ldots,\lambda_n)$, where zero parts
are allowed, let $m_r(\lambda)$ be the number of parts equal to $r$, so
that $m_0(\lambda)=n-\ell(\lambda)$, and put
\[
 v_\lambda(t)
 =
 \prod_{r\geq0}\prod_{j=1}^{m_r(\lambda)}
 \frac{1-t^j}{1-t}.
\]
Let $\mathbb Z_2\wr S_n$ act by permuting the variables and replacing any
subset of them by their inverses. The $\mathrm{BC}_n$ Hall--Littlewood
polynomial is given by \cite{Venkateswaran,Macdonald00,ole}
\begin{multline}\label{eq:intro-bc-definition}
 P_\lambda^{\mathrm{BC}_n}(X;t,s_1,s_2)
 =
 \frac{1}
 {v_\lambda(t)\prod_{i=1}^{n-\ell(\lambda)}
                  (1-s_1s_2t^{i-1})}
 \\
 {}\times
 \sum_{\sigma\in\mathbb Z_2\wr S_n}
 \sigma.\!\left(
   \prod_{i=1}^n
   x_i^{-\lambda_i}
   \frac{(1-s_1x_i)(1-s_2x_i)}{1-x_i^2}
   \prod_{1\leq i<j\leq n}
   \frac{(tx_i-x_j)(1-tx_ix_j)}
        {(x_i-x_j)(1-x_ix_j)}
 \right).
\end{multline}
The type $\mathrm{B}_n$ and $\mathrm{C}_n$ polynomials are the
specialisations
\begin{equation}\label{eq:intro-bc-specialisations}
\begin{aligned}
 P_\lambda^{\mathrm{B}_n}(X;t,s)
 &=P_\lambda^{\mathrm{BC}_n}(X;t,s,-1),\\
 P_\lambda^{\mathrm{C}_n}(X;t,s)
 &=P_\lambda^{\mathrm{BC}_n}(X;t,s^{1/2},-s^{1/2}).
\end{aligned}
\end{equation}
In particular, the classical characters are recovered by
\[
 P_\lambda^{\mathrm{B}_n}(X;0,0)=\oo_\lambda(X),
 \qquad
 P_\lambda^{\mathrm{C}_n}(X;0,0)=\sp_\lambda(X),
 \qquad
 P_\lambda^{\mathrm{B}_n}(X;0,1)=\oe_\lambda(X).
\]
Our convention for $\oe_\lambda$ is the even orthogonal character
appearing in \eqref{eq: dual JT type D}; its Weyl determinant formula,
together with those for $\oo_\lambda$ and $\sp_\lambda$, is recalled in
\S\ref{sec:preliminaries}.

Jacobi--Trudi-type formulas for parameter generalisations of Schur
polynomials have also been studied for rectangular Jack polynomials
\cite{Matsumoto} and near-rectangular Jack polynomials \cite{BBL}, where
hyperdeterminants replace determinants.
For the ordinary Hall--Littlewood polynomials, Warnaar
\cite[Theorem~1.1]{ole} proved the following identity for a rectangular
partition:
\begin{equation}\label{eq:intro-warnaar-a}
 P_{(k^r)}(X;t)
 =
 \sum_{\substack{y\in\mathbb Z^k\\y_1+\cdots+y_k=0}}
 \det_{1\leq i,j\leq k}
 \left(
   t^{k\binom{y_i}{2}+iy_i}
   e_{r-i+j-ky_i}(X)
 \right),
\end{equation}
where $k\geq1$ and $r\geq0$.
He further conjectured six identities of a similar form for the
specialisations of the $\mathrm{B}_n$, $\mathrm{C}_n$, and
$\mathrm{BC}_n$ Hall--Littlewood polynomials considered here, and proved
them for $k=1$.
In each case, the right-hand side is a sum over integer vectors of
determinants whose entries involve elementary symmetric polynomials and
quadratic powers of $t$.
We prove these conjectures in Corollaries~\ref{cor-b},
\ref{cor-type-c}, and~\ref{cor-type-bc}.
The character expansions in Theorems~\ref{thm-b-char},
\ref{main-type-c}, and~\ref{thm-type-bc-0-char-all} are the main
intermediate results and provide a common approach to the six proofs.

We next introduce the notation needed to state these results.
For $y=(y_1,\ldots,y_k)\in\mathbb Z^k$, write
$|y|=\sum_{i=1}^k y_i$. We use Iverson notation:
$[\text{statement}]$ is $1$ if the statement is true and $0$ otherwise.
For a vector $\nu=(\nu_1,\ldots,\nu_k)$, let $\mathrm{sort}(\nu)$ denote
its decreasing rearrangement and let
\[
 \inv(\nu)
 =
 \bigl|\{(i,j):1\leq j<i\leq k,\ \nu_i<\nu_j\}\bigr|
\]
be its number of inversions.
If the entries of $\nu$ are distinct elements of
$\{0,\ldots,n+k-1\}$, define $\lambda(\nu)\subseteq(k^n)$ by
\[
 \lambda(\nu)'_i
 =
 n-\mathrm{sort}(\nu)_{k+1-i}+i-1,
 \qquad 1\leq i\leq k.
\]
Equivalently, using the notation of \S\ref{sec:lemmas.complements},
\[
 \lambda(\nu)
 =
 \left(
   \bigl(\mathrm{sort}(\nu)-\rho_k\bigr)^{\square n^k}
 \right)',
\]
where $\rho_k=(k-1,k-2,\ldots,0)$ and
$\mu^{\square n^k}=(n-\mu_k,\ldots,n-\mu_1)$ for
$\mu\subseteq(n^k)$.
All sums in the character expansions below are restricted to integer
vectors $y$ for which the associated entries of $\nu$ are distinct and
belong to $\{0,\ldots,n+k-1\}$, in addition to any restriction explicitly
displayed. Thus these sums are finite. The determinant sums in the
corollaries are finite as well, since $\dot e_r=0$ outside $0\leq r\leq2n$.
The relation between these two descriptions of the summation ranges is
proved in \S\ref{sec: Handling the summation range}.

We begin with the three specialisations of type $\mathrm{B}_n$.

\begin{thm}\label{thm-b-char}
Let $n,k\geq1$. Then
\begin{subequations}
\begin{equation}
    \label{thm-type-b-0-char}
    P_{(k^n)}^{\mathrm{B}_n}(X;t,0)=\sum_{y \in \mathbb{Z}^k}
(-1)^{|y| + \inv(\nu)}
t^{\sum_{j=1}^k \left( (k+\frac{1}{2}) y_j^2 + (j-\frac{1}{2})y_j \right)} 
\oo_{\lambda(\nu)}(X),
\end{equation}
where $K=2k+1$ and $\nu_a = |Ky_a+a|-[y_a\ge0]$ for $a=1,\dots,k$;
\begin{equation}
\label{thm-type-b-t-char}
P_{(k^n)}^{\mathrm{B}_n}(X;t,-t^{1/2})=\sum_{y\in\Z^k}
(-1)^{\inv(\nu)}
t^{\sum_{j=1}^k \left( ky_j^2 + (j-\frac{1}{2})y_j \right)}
\oo_{\lambda(\nu)}(X),
\end{equation}
where $K=2k$ and $\nu_a = |Ky_a + a| - [y_a\ge0]$ for $a=1,\dots,k$;
and
    \begin{equation}
\label{thm-type-b-1-char}
     P_{(k^n)}^{\mathrm{B}_n}(X;t,1)=
\sum_{y \in \mathbb{Z}^k}
(-1)^{|y|+\mathrm{inv}(\nu)}
t^{\sum_{j=1}^k \left(k y_j^2 + (j-1)y_j \right)}
\oe_{\lambda(\nu)}(X),
    \end{equation}
\end{subequations}
where $K=2k$ and $\nu_a = |Ky_a + a-1|$ for $a = 1,\ldots,k$.
In each sum, the entries of $\nu$ are required to be distinct and to lie in $\{0,\ldots,n+k-1\}$.
\end{thm}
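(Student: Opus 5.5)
We will work directly from the symmetrisation formula \eqref{eq:intro-bc-definition} with $\lambda=(k^n)$, so that $v_\lambda(t)=\prod_{j=1}^n\frac{1-t^j}{1-t}$ and the factor $\prod(1-s_1s_2t^{i-1})$ is absent.

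\textbf{Step 1: reduce to a single antisymmetrisation.} The standard device is to write
\[
\prod_{i<j}\frac{(tx_i-x_j)(1-tx_ix_j)}{(x_i-x_j)(1-x_ix_j)} = \frac{\prod_{i<j}(tx_i-x_j)(1-tx_ix_j)}{\Delta_{BC}(X)}.
\]
Then $\sum_{\sigma}\sigma.(\cdots)$ becomes $\frac{1}{\Delta}\sum_\sigma \sgn(\sigma)\,\sigma.(F)$, where
\[
F=\prod_i x_i^{-k}\frac{(1-s_1x_i)(1-s_2x_i)}{1-x_i^2}\,(\text{numerator}) .
\]
The denominator $\prod(1-x_i^2)$ is absorbed into the type-B/C/D Weyl denominator. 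We write $\Delta$ as the Weyl denominator of the relevant classical group:
\begin{itemize}
\item $\oo$ for $s_2=-1$ with $s_1\in\{0,-t^{1/2}\}$;
\item $\oe$ for $s_1=1,s_2=-1$, where $(1-x_i)(1+x_i)/(1-x_i^2)=1$.
\end{itemize}
The resulting alternant is $\frac{1}{v_\lambda(t)}\sum_\sigma \sgn(\sigma)\,\sigma.\bigl(G(X)\bigr)$, with $G$ a Laurent polynomial. The $t$-dependent numerator $\prod_{i<j}(tx_i-x_j)(1-tx_ix_j)$ is not antisymmetric. Following Warnaar's type-A argument, we aim to replace it, modulo the kernel of antisymmetrisation and after dividing by $v_\lambda(t)$, by a product of one-variable factors. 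Concretely, we expect an identity of the form
\[
\frac{1}{v}\sum_\sigma \sgn\,\sigma.\Bigl(\prod x_i^{-k}\,\phi(x_i)\prod_{i<j}(tx_i-x_j)(1-tx_ix_j)\Bigr)=\sum_\sigma\sgn\,\sigma.\Bigl(\prod_i x_i^{n-i}\cdots\Bigr).
\]
For the rectangular shape this can be obtained from the $t$-binomial / Macdonald-type summation for Hall–Littlewood polynomials of rectangular shape. Alternatively, and this is our first choice, we use the known closed form of $P^{BC_n}_{(k^n)}$ at these specialisations as a ratio involving a $\mathbb Z_2\wr S_n$-antisymmetrisation of a product of theta-type functions.

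\textbf{Step 2: expand via a Macdonald identity.} The appearance of $t^{(k+\frac12)y^2}$ with modulus $K=2k+1$ (resp.\ $K=2k$) signals a Jacobi triple product / Macdonald identity for affine root systems of rank $k$, namely $B_k^{(1)}$, $C_k^{(1)}$ and $D_k^{(1)}$-type. So the plan is as follows. Write the rectangular Hall–Littlewood polynomial as a $k\times k$ object via the complementation $\lambda\mapsto(\lambda^{\square n^k})'$ of \S\ref{sec:lemmas.complements}; this exchanges rows and columns, so that $\lambda(\nu)'$ is a $k$-part object. Then apply a Cauchy-type duality
\[
\sum_\lambda \oo_\lambda(X)\,\chi_{\lambda^{*}}(Z)=\prod_{i,a}(1+x_iz_a)(1+x_i^{-1}z_a)\cdots
\]
(the dual Cauchy identity for orthogonal/symplectic characters with $\dot e_r$). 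This converts the problem into identifying coefficients in $k$ auxiliary variables $z_1,\dots,z_k$. In those variables the $t$-deformation should appear as a specialisation $z_a\mapsto t^{\cdot}$ of a Macdonald-identity product. Expanding that product as $\sum_{y\in\Z^k}\sum_{w}\sgn(w)\,t^{Q(y)}z^{w(Ky+\rho)}$ then gives characters indexed by $\nu_a=|Ky_a+a|-[y_a\ge0]$ (the absolute value reflecting the $\mathbb Z_2$ part of the Weyl group). Reordering $\nu$ produces the sign $(-1)^{\inv(\nu)}$, and terms with repeated or out-of-range entries vanish by the reflection/straightening rules for $\oo$ and $\oe$.

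\textbf{Step 3: bookkeeping.} We then need to match the signs $(-1)^{|y|}$, the exponents, and the summation range, using \S\ref{sec: Handling the summation range}. The three cases differ only in $(s_1,s_2)$, which change the one-variable factor $\phi$ and hence the affine type (odd modulus $2k+1$ for $s_1=0$; $2k$ otherwise), so they run in parallel.

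The main obstacle is Step 1: turning the non-antisymmetric Hall–Littlewood kernel into an expression that pairs with the Macdonald identity at the rectangular shape. If the direct route fails, we would instead prove the identity by induction on $n$ using a branching rule: the variable-removal recursion of $P^{BC}$, restricted to rectangles, compared with the $\oo$/$\oe$ branching rule applied to the right-hand side. The base case $n=1$ is essentially Warnaar's $k=1$ computation.
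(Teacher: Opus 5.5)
Your proposal has a genuine gap: the two steps that carry all the content are left as expectations, and the identity on which Step~2 rests is never formulated. Step~1 ends with ``we expect an identity of the form\dots'' and an appeal to an unspecified ``known closed form''. Yet the reduction you want is exact and elementary, and it is what makes everything else work.

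Since $\lambda=(k^n)$ has full length, every factor of the summand in \eqref{eq:intro-bc-definition} other than $\prod_{i<j}\frac{tx_i-x_j}{x_i-x_j}$ is $S_n$-invariant. Macdonald's identity $\sum_{\sigma\in S_n}\sigma.\prod_{i<j}\frac{tx_i-x_j}{x_i-x_j}=\prod_{i=1}^n\frac{1-t^i}{1-t}=v_{(k^n)}(t)$ therefore removes the $S_n$-symmetrisation together with the normalisation, leaving only the sum over $\{\pm1\}^n$. The one remaining non-separable factor is handled by the $\mathrm D_n$ Weyl denominator at $\sqrt t X$:
$\prod_{i<j}(1-tx_ix_j)(x_i-x_j)=\frac12(\sqrt t)^{\binom n2}\prod_i x_i^{n-1}\det_{1\le i,j\le n}\bigl((\sqrt t\,x_i)^{j-1}+(\bar x_i/\sqrt t)^{j-1}\bigr)$.
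Row-multilinearity then absorbs the $\{\pm1\}^n$-sum, and $P^{\mathrm B_n}_{(k^n)}$ becomes a \emph{single} $n\times n$ determinant divided by the classical Weyl denominator. Its columns have two terms; for $s=0$ column $j$ is $t^{-(j-1)/2}\eta_i(k+j-\frac12)+t^{(j-1)/2}\eta_i(k-j+\frac32)$ with $\eta_i(m)=x_i^m-\bar x_i^m$. For $s=-t^{1/2}$ this form is reached after the column operations $C_j\to C_j+C_{j-1}$, and for $s=1$ one uses $\zeta_i(m)=x_i^m+\bar x_i^m$ instead. This is the precise version of the ``product of one-variable factors'' you were hoping for.

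Step~2 is not an argument as it stands. You never write down an identity expressing $P^{\mathrm B_n}_{(k^n)}(X;t,s)$ as a pairing of a dual Cauchy kernel with a $t$-dependent function of $Z$, let alone one that specialises an affine Macdonald product. So the moduli $K$, the quadratic exponents, the shift $[y_a\ge0]$ and the signs are asserted rather than derived; a Howe-duality route might exist, but the decisive identity is missing.

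No affine Lie theory is needed. Expand the determinant above column by column, reduce arguments via $\eta_i(-a)=-\eta_i(a)$ (resp.\ $\zeta_i(-a)=\zeta_i(a)$), and discard choices with two equal columns. Encode column $j$ as an edge between its two arguments. The result is a disjoint union of paths, each running through vertices $|Ky+c|$, $y\in\Z$, and containing exactly one small vertex $c$ (its center). The surviving terms are exactly the orientations directed away from one root per component. The root $\pm(Ky+c)$ defines $y$, and the product of $p_-/p_+$ over the edges between center and root gives $t^{\frac12Ky^2+\cdots}$. The sign comes from a parity count, since moving a root across one edge exchanges two entries. The (shifted) roots are the omitted columns $\nu$, so the Weyl formula and Lemma~\ref{lem:flip} produce $\oo_{\lambda(\nu)}$ (resp.\ $\oe_{\lambda(\nu)}$).

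Your ``straightening'' remark does not cover two points:
\begin{enumerate}
\item For $s=0$ and $s=1$ there are $k+1$ roots, and the extra column $\eta_i(k+\frac12)$ (resp.\ $\zeta_i(k)$) forces one of them to sit at its center, leaving $k$.
\item For $s=1$ the component through $0$ is one-sided ($y_1\ge0$), while the $\mathrm D_n$ Weyl formula carries the factor $2/(1+\delta_{\lambda_n,0})$. This is reconciled only through the symmetry $y_1\mapsto-y_1$ of the summand, together with the fact that for $y_1\ge0$ one has $\lambda(\nu)_n=0$ if and only if $y_1\neq0$.
\end{enumerate}

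Your fallback does not close either. Branching a $\mathrm{BC}_n$ rectangle to $n-1$ variables produces non-rectangular shapes (already for the classical characters), so an induction within rectangles does not work. Also, Warnaar's verified case is $k=1$ for all $n$, not $n=1$ for all $k$.
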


Applying the odd and even orthogonal dual Jacobi--Trudi identities to Theorem~\ref{thm-b-char} gives the following three formulas, conjectured in \cite[Conjecture~1.2]{ole}.

\begin{cor}[{\cite[Conjecture 1.2]{ole}}] \label{cor-b}
Let $n,k\geq1$. Then
\begin{subequations}
\begin{equation}
    \label{thm-b-0}
    P_{(k^n)}^{\mathrm{B}_n}(X;t,0)=\sum_{y \in \mathbb{Z}^k}
\det_{1 \le i,j \le k}
\left(
(-1)^{y_i}
t^{\frac{1}{2} K y_i^2 - (j-\frac{1}{2})y_i}
\left(
\dot{e}_{\,n-i+j-K y_i}
+
\dot{e}_{\,n+i+j- Ky_i-1}
\right)
\right),
\end{equation}
where $K=2k+1$, as well as
\begin{equation}
\label{thm-type-b-t}
 P_{(k^n)}^{\mathrm{B}_n}(X;t,-t^{1/2})=\sum_{y \in \mathbb{Z}^k}
\det_{1 \le i,j \le k}
\left(
t^{\frac{1}{2} K y_i^2 - (j-\frac{1}{2})y_i}
\left(
\dot{e}_{\,n-i+j-K y_i}
+
\dot{e}_{\,n+i+j- Ky_i-1}
\right)
\right)\,,
\end{equation}
and
    \begin{equation}
\label{thm-type-b-1}
     P_{(k^n)}^{\mathrm{B}_n}(X;t,1)=
\frac{1}{2}
\sum_{y \in \mathbb{Z}^k}
\det_{1 \le i,j \le k}
\left(
(-1)^{y_i}
t^{\frac{1}{2}K y_i^2 - (j-1)y_i}
(
\dot{e}_{\,n-i+j-K y_i}
+
\dot{e}_{\,n+i+j- Ky_i-2})
\right),
    \end{equation}
\end{subequations}
where $K=2k$.
\end{cor}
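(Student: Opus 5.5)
We derive the corollary from Theorem~\ref{thm-b-char} by substituting the dual Jacobi--Trudi identities \eqref{eq: dual JT type B} and \eqref{eq: dual JT type D} for $\oo_{\lambda(\nu)}$ and $\oe_{\lambda(\nu)}$, and then rewriting each signed, sorted determinant as an unsorted determinant indexed directly by $y$. We treat \eqref{thm-b-0} in detail; the other two cases differ only in the values of $K$, the sign and $t$-exponents, and the offset in $\nu_a$.

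\emph{Step 1: express $\oo_{\lambda(\nu)}$ through $\nu$.} Write $\mu=\mathrm{sort}(\nu)$. By definition $\lambda(\nu)'_i=n-\mu_{k+1-i}+i-1$. After reversing the row order (index $i\mapsto k+1-i$) and the column order, the entries of \eqref{eq: dual JT type B} become functions of $n-\mu_a$ and $j$. Row reversal and column reversal each contribute the sign $(-1)^{\binom k2}$, so together they cancel. One obtains
\[
\oo_{\lambda(\nu)}=\det_{1\le a,j\le k}\bigl(\dot e_{n-\mu_a+j-1}+\dot e_{n-\mu_a-j}\bigr),
\]
up to a possible reindexing $j\mapsto k+1-j$, which should be fixed so that the dependence on $j$ is a clean shift. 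Since the row indexed by $a$ depends only on $\mu_a$, permuting the rows back to the order of $\nu$ introduces exactly the factor $(-1)^{\inv(\nu)}$. That sign cancels the $(-1)^{\inv(\nu)}$ in \eqref{thm-type-b-0-char}, leaving
\[
(-1)^{|y|}\,t^{\sum_a(\cdots)}\det_{a,j}\bigl(\dot e_{n-\nu_a+j-1}+\dot e_{n-\nu_a-j}\bigr).
\]
We then pull the factors $(-1)^{y_a}$ and $t^{(k+\frac12)y_a^2+(a-\frac12)y_a}$ into row $a$.

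\emph{Step 2: remove the absolute value.} Split according to the sign of $y_a$. If $y_a\ge0$, then $\nu_a=Ky_a+a-1$, and row $a$ reads $\dot e_{n-Ky_a-a+j}+\dot e_{n-Ky_a-a-j+1}$. If $y_a<0$, then $\nu_a=-Ky_a-a$, and row $a$ reads $\dot e_{n+Ky_a+a+j-1}+\dot e_{n+Ky_a+a-j}$. This differs from the target form. We use the symmetry $\dot e_r=\dot e_{2n-r}$ (which holds since $\dot e_r$ is an elementary symmetric polynomial in the palindromic alphabet $(X,\overline X)$), together with the sum of both terms, to rewrite the row. In addition, replacing $y_a$ by $-y_a$ or $-y_a-1$ for negative entries turns every row into the common shape $\dot e_{n-i+j-Ky_i}+\dot e_{n+i+j-Ky_i-1}$ after a reindexing of rows and columns. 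Concretely, we expect the change of variables $y_a\mapsto -y_a$ in the determinant's row form, combined with $\dot e_r=\dot e_{2n-r}$, to exchange the two families of rows.

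\emph{Step 3: match the $t$-exponents.} The exponent must move from $(k+\tfrac12)y_a^2+(a-\tfrac12)y_a$ in the row index to $\tfrac12Ky_i^2-(j-\tfrac12)y_i$ in the column index. This relocation is valid only inside a determinant whose rows are "shifted by $j$". The plan is to use multilinearity and the Weyl-denominator-type structure: $\sum_j$-dependent powers $t^{-jy_i}$ can be traded for $t^{a y_a}$-type factors only after summing over all $y$ in the same residue class. So the cleanest route is probably the reverse direction. We expand the right-hand side of \eqref{thm-b-0} by the Leibniz formula, $\sum_{y}\sum_{\sigma}\operatorname{sgn}\sigma\prod_i(\cdots)_{i,\sigma(i)}$, and substitute $y_i\mapsto y_{\sigma(i)}$ together with a shift making the indices $n-i+\sigma(i)$ depend on a single combined variable $\nu$. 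Terms with coinciding $\nu$-entries cancel in pairs via a sign-reversing involution, and the surviving terms reproduce the sum of Step 1.

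\emph{Main obstacle.} The hardest part is the combinatorial bookkeeping in Step 3. It involves the cancellation of terms with repeated or out-of-range $\nu$-entries, which is the content of \S\ref{sec: Handling the summation range} and which we take as given. It also involves checking that the $t$-exponents and signs agree after the change of variables. We would first verify the $k=1$ case against Warnaar's proof as a sanity check on the sign and $t$ conventions.
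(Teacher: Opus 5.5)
Your route is the paper's. You substitute the dual Jacobi--Trudi identities into Theorem~\ref{thm-b-char} and permute rows by the sorting permutation to cancel $(-1)^{\inv(\nu)}$; no row or column reversals are needed, since row $i$ already depends only on the $i$-th smallest entry of $\nu$. In the case \eqref{thm-b-0}, and analogously for the others, you apply $\dot e_{n+r}=\dot e_{n-r}$ to the rows with $y_a\ge0$ so every row reads $\dot e_{n+Ky_a+a-j}+\dot e_{n+Ky_a+a+j-1}$, flip $y\mapsto -y$ globally, transpose, and move $t^{-(j-\frac12)y}$ into the matrix by your Leibniz reindexing, which is exactly Lemma~\ref{lem:powery} (over $S_k$-orbits of $y$, not residue classes), with Lemma~\ref{lem: handling the summation range} extending the range. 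Drop the alternatives floated in Step~2: the substitution must be the global $y\mapsto -y$, not applied only to negative entries, and $y_a\mapsto -y_a-1$ would break the form of the quadratic $t$-exponent.
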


The next two character expansions concern type $\mathrm{C}_n$. Their Jacobi--Trudi consequences settle \cite[Conjecture~1.3]{ole}.

\begin{thm}\label{main-type-c}
Let $n,k\geq1$. Then
\begin{subequations}
\begin{equation}
        \label{thm-type-c-0-char}
    P_{(k^n)}^{\mathrm{C}_n}(X;t,0)=
\sum_{y \in \mathbb{Z}^{k}}
(-1)^{\mathrm{inv}(\nu)+\sum_{j=1}^k [y_j<0]}
t^{\sum_{j=1}^k \left(\frac{1}{2}Ky_j^2 + jy_j \right)}
\sp_{\lambda(\nu)}(X),
    \end{equation}
    where $K=2k+2$ and $\nu_a = |Ky_a + a| - 1$ for $a=1,\dots,k$, and
    \begin{equation}
\label{thm-type-c-t-char}
    P_{(k^n)}^{\mathrm{C}_n}(X;t,t)=
\sum_{\substack{y \in \mathbb{Z}^{k}\\ y_k\ge0}}
(-1)^{|y|+\mathrm{inv}(\nu)+\sum_{j=1}^k [y_j<0]}
t^{\sum_{j=1}^k \left( \frac{1}{2}Ky_j^2 + jy_j \right)}
\sp_{\lambda(\nu)}(X),
\end{equation}
\end{subequations}
where $K=2k$ and $\nu_a = |Ky_a + a|-1$ for $a=1,\dots,k$.
In each sum, the entries of $\nu$ are required to be distinct and to lie in $\{0,\ldots,n+k-1\}$.
\end{thm}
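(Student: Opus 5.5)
We will prove Theorem~\ref{main-type-c} by the same strategy as Theorem~\ref{thm-b-char}: write both sides as antisymmetrisations over $\mathbb Z_2\wr S_n$ divided by the type $\mathrm{C}_n$ Weyl denominator, and compare numerators. For $\lambda=(k^n)$ we have $m_0=0$ and $v_\lambda(t)=\prod_{j=1}^n\frac{1-t^j}{1-t}$. The specialisation $s_1=s^{1/2}$, $s_2=-s^{1/2}$ turns $(1-s_1x_i)(1-s_2x_i)$ into $1-sx_i^2$, which is $1$ at $s=0$ and $1-tx_i^2$ at $s=t$. So $P^{\mathrm{C}_n}_{(k^n)}$ is $\frac{1}{v_\lambda(t)}\sum_\sigma \sigma.(\cdots)$ with a fully explicit summand.

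The first step is to symmetrise away the $t$-dependent Vandermonde-like factor. We multiply and divide by the $\mathrm{C}_n$ Weyl denominator $\Delta_C(X)=\prod_i(1-x_i^2)\prod_{i<j}(x_i-x_j)(1-x_ix_j)$, up to a monomial. Then $\Delta_C(X)\,P^{\mathrm{C}_n}_{(k^n)}$ becomes an antisymmetrisation of
\[
\prod_i x_i^{-k}(1-sx_i^2)\prod_{i<j}(tx_i-x_j)(1-tx_ix_j).
\]
The $\sp_\mu$ are antisymmetrisations of $\prod_i x_i^{\mu_i+n-i+1}$ over $\Delta_C$. So we must expand the antisymmetrised product in the basis of $\mathbb Z_2\wr S_n$-alternants $\det(x_j^{m_i}-x_j^{-m_i})$.

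We plan to do this as in Warnaar's type $\mathrm{A}$ proof: we use a Macdonald-type (or Gustafson/Milne $\mathrm{C}_n$) denominator identity of Kac--Wakimoto/affine type. The product $\prod_{i<j}(tx_i-x_j)(1-tx_ix_j)$, antisymmetrised together with $\prod_i(1-sx_i^2)$ at $s\in\{0,t\}$, should be recognised as a specialisation of the affine $\mathrm{C}$ (resp.\ a twisted affine) Weyl–Kac denominator for a rank $n+k$ root system. The theta-function sum over $y\in\mathbb Z^k$ with quadratic exponent $\frac12Ky_j^2+jy_j$, $K=2k+2$ or $2k$, reflects the lattice of the affine Weyl group of type $\mathrm{C}_k$ (level $K$). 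Concretely, we introduce $k$ auxiliary variables $z_1,\dots,z_k$ and consider the $\mathrm{C}_{n+k}$ (resp.\ $\mathrm{BC}$) affine denominator in $(X,Z)$. We specialise $Z$ to a geometric progression in $t$, $z_a=t^{a}$-type powers, which kills all but the rectangular contribution. We then use the Cauchy–Laplace expansion along the $Z$-rows. Each surviving term gives an alternant in $X$ whose exponents are the complement, inside $\{0,\dots,n+k-1\}$, of $\{\nu_1,\dots,\nu_k\}$ with $\nu_a=|Ky_a+a|-1$. Here the absolute value comes from the $\mathbb Z_2$ sign reflection $x\mapsto x^{-1}$, and the factor $(-1)^{[y_a<0]}$ from the sign of that reflection. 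The complementation lemma of \S\ref{sec:lemmas.complements} converts a complement of exponents into $\lambda(\nu)$, conjugated, and produces the sign $(-1)^{\inv(\nu)}$ from sorting $\nu$. Terms with coinciding $\nu_a$ or $\nu_a$ outside the range vanish or cancel, which is the range statement of \S\ref{sec: Handling the summation range}.

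For \eqref{thm-type-c-t-char} the extra factor $\prod_i(1-tx_i^2)$ shifts the level to $K=2k$. It also creates a pairing between $y$ and a reflected vector when $y_k<0$. We expect to show that the involution $y_k\mapsto -y_k-1$ (after reordering) pairs terms so that only $y_k\ge0$ survives with the stated sign $(-1)^{|y|}$.

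The main obstacle is the first identification: proving that the antisymmetrisation of the $t$-deformed product, after division by $v_\lambda(t)$, equals the specialised affine denominator sum exactly. This is where the normalisation $v_\lambda(t)$ must be absorbed, via the $t$-analogue of the Weyl group order, $\sum_{w\in S_n}\prod_{i<j}\frac{tx_{w i}-x_{w j}}{x_{wi}-x_{wj}}=\prod_j\frac{1-t^j}{1-t}$. We would try first to prove it by induction on $k$, checking $k=1$ against Warnaar's proof, or alternatively by comparing both sides as Laurent polynomials with the same $\mathbb Z_2\wr S_n$-antisymmetry and matching leading and $t$-graded coefficients.
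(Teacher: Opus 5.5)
Your proposal has a genuine gap. Everything that makes the theorem true sits in the step you yourself call the main obstacle, and that step is never carried out. No specific affine or twisted affine denominator identity is written down, and the specialisation of the auxiliary variables is left as ``$t^a$-type powers''. There is no argument that this specialisation kills all but the rectangular contribution. Nothing in the plan determines the exact sign $(-1)^{\inv(\nu)+\sum_j[y_j<0]}$ (times $(-1)^{|y|}$ when $s=t$), the exponent $\frac12Ky_j^2+jy_j$, or why $K=2k+2$ for $s=0$ but $K=2k$ for $s=t$.

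The missing idea is that the rectangle needs no affine machinery. You mention $\sum_{\sigma\in S_n}\sigma.\prod_{i<j}\frac{tx_i-x_j}{x_i-x_j}=v_{(k^n)}(t)$ only as something to be absorbed, but it disposes of the whole $S_n$-sum at the outset. Indeed $\prod_i x_i^{-k}\frac{1-sx_i^2}{1-x_i^2}$ and $\prod_{i<j}\frac{1-tx_ix_j}{1-x_ix_j}$ are $S_n$-invariant, so only the $2^n$ sign changes remain. Recognising $\prod_{i<j}(1-tx_ix_j)(x_i-x_j)$ as the type $\mathrm D_n$ Weyl denominator at $x_i\mapsto\sqrt t\,x_i$, multilinearity turns that $2^n$-term sum into a single $n\times n$ determinant (Lemma~\ref{magical manipulation}). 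After dividing by the $\mathrm C_n$ denominator, and applying $C_j\to C_j-C_{j-2}$ when $s=t$, its columns are $(\sqrt t)^{-j}\eta_i(c+j)\pm(\sqrt t)^{j}\eta_i(c-j)$ with $\eta_i(m)=x_i^m-x_i^{-m}$. Here $c=k+1$ together with a fixed column $\eta_i(k+1)$ for $s=0$, and $c=k$ for $s=t$.

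Expanding multilinearly, a term survives only if the chosen arguments form a rooted orientation of a union of paths. The $j$-th path consists of the vertices $\equiv\pm j\pmod K$. The roots $\alpha_j(Ky_j+j)$ are the omitted exponents. Summing the reversed edge labels ($K\ell+k+1+\alpha_j j$ for $s=0$, $K\ell+k+\alpha_j j$ for $s=t$) gives the quadratic power of $t$. The forced values $y_0=0$ (zero column, as $\eta_i(0)=0$) and, for $s=0$, $y_{k+1}=0$ (repeated column $\eta_i(k+1)$) leave exactly $k$ free parameters. So the theta-type sum comes out of a finite determinant expansion, and your plan supplies neither this nor a worked-out substitute.

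Your plan for \eqref{thm-type-c-t-char} also points the wrong way. With $K=2k$ one has $\nu_k=k|2y_k+1|-1$, $\frac12Ky_k^2+ky_k=ky_k(y_k+1)$, and the $y_k$-dependent sign $(-1)^{y_k+[y_k<0]}$. All three are invariant under $y_k\mapsto-1-y_k$, and $\inv(\nu)$ is unchanged since $\nu$ is. Hence the summand is invariant and the sum over all $y_k\in\mathbb Z$ is exactly twice the stated one. The restriction $y_k\ge0$ therefore removes double counting rather than cancelling terms. In the paper this is because the center $k$ satisfies $k\equiv-k\pmod{2k}$, so its path has only one side and each root is parametrised once. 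Any involution argument along your lines must produce a factor $\tfrac12$, not a sign-reversing cancellation.
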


The symplectic dual Jacobi--Trudi identity gives the following corollary. The condition $y_k\geq0$ in the second character expansion corresponds to the condition that $|y|$ is even in the determinant sum.

\begin{cor}[{\cite[Conjecture 1.3]{ole}}]\label{cor-type-c}
Let $n,k\geq1$. Then
\begin{subequations}
\begin{equation}
     \label{cor-type-c-0}
P_{(k^n)}^{\mathrm{C}_n}(X;t,0)=\sum_{y \in \mathbb{Z}^k}
\det_{1 \le i,j \le k}
\left(
t^{\frac{1}{2} K y_i^2 - jy_i}
\left(
\dot{e}_{\,n-i+j-K y_i}
-
\dot{e}_{\,n+i+j- Ky_i}
\right)
\right),
\end{equation}
where $K = 2k+2$.
Furthermore,
\begin{equation}
\label{cor-type-c-t}
P_{(k^n)}^{\mathrm{C}_n}(X;t,t)=
    \sum_{\substack{y \in \mathbb{Z}^k,\\ |y| \text{ even }}}
\det_{1 \le i,j \le k}
\left(
t^{\frac{1}{2} K y_i^2 - jy_i}
\left(
\dot{e}_{\,n-i+j-K y_i}
-
\dot{e}_{\,n+i+j- Ky_i}
\right)
\right),
\end{equation}
\end{subequations}
    where $K=2k$.
\end{cor}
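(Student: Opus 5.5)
The plan is to derive Corollary~\ref{cor-type-c} from Theorem~\ref{main-type-c} by expanding each symplectic character $\sp_{\lambda(\nu)}$ with the dual Jacobi--Trudi identity \eqref{eq: dual JT type C}, and then matching the resulting alternating sum with the determinant sum through multilinearity. The natural direction is to start from the right-hand side of \eqref{cor-type-c-0}. Since the $i$-th row of the $k\times k$ matrix depends only on $y_i$, the sum over $y\in\mathbb Z^k$ of the determinant can be rewritten as a single determinant whose rows are sums over the shift $y_i$. Instead of collapsing it this way, I will keep the sum outside the determinant and identify each summand with a symplectic character.

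For fixed $y$, I will factor $t^{\frac12Ky_i^2}$ out of row $i$. The column-dependent factor $t^{-jy_i}$ cannot be pulled out of a row, so I will write the entry as $t^{-jy_i}$ times $\dot e_{m_i+j}-\dot e_{m_i'-j}$-type terms, where $m_i=n-i-Ky_i$. The key step is a reindexing trick: replace row $i$'s parameter $n-i-Ky_i$ by $\lambda'_{\sigma}-\sigma$ for a suitable permutation. I expect the factor $t^{-jy_i}$ to conflict with this, so I will first check whether the intended reading is that the determinant sum equals a sum of determinants $\det(\dot e_{\mu_i-i+j}-\dot e_{\mu_i-i-j})$ after using the symmetry $y\mapsto -y-$(something). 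Concretely, I will use the reflection $\dot e_r=\dot e_{2n-r}$, which holds because $X\cup\overline X$ is closed under inversion and $e_{2n}=1$. This converts the second term $\dot e_{n+i+j-Ky_i}$ into $\dot e_{n-i-j+Ky_i}$. Each row is then of the form $\dot e_{a_i+j}-\dot e_{b_i-j}$ with $a_i=n-i-Ky_i$ and $b_i=n-i+Ky_i$.

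Next I will show that such a row, as a function of $y_i$, combines with the row obtained from $-y_i-$(shift) to give a genuine symplectic row $\dot e_{c+j}-\dot e_{c-j}$ with $c=|Ky_i+i|-1-(\text{const})$. This matches $\nu_i=|Ky_i+i|-1$ and the sign $[y_i<0]$ in \eqref{thm-type-c-0-char}. After this, row $i$ corresponds to the value $\nu_i$. Permuting rows to sort $\nu$ produces $(-1)^{\inv(\nu)}$, and \eqref{eq: dual JT type C} identifies the determinant with $\sp_{\lambda(\nu)}$, via $\lambda(\nu)'_i=n-\mathrm{sort}(\nu)_{k+1-i}+i-1$. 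Vectors $y$ with repeated $\nu_i$ produce equal rows and vanish. Vectors with some $\nu_i\ge n+k$ or $\nu_i<0$ should vanish by the support of $\dot e$; this is the summation-range issue the paper defers to \S\ref{sec: Handling the summation range}, and I would assume that result. The $t$-exponents $\frac12Ky_j^2+jy_j$ must then match, and I would track them row by row.

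For \eqref{cor-type-c-t} the same argument applies with $K=2k$. The additional point is that $|y|$ even on the determinant side corresponds to the restriction $y_k\ge0$ together with the factor $(-1)^{|y|}$ on the character side. I would prove this by pairing $y$ with its image under the involution that flips the sign class of $y_k$. The main obstacle I expect is the bookkeeping of the $t^{-jy_i}$ column factors: making the powers of $t$ and the signs align exactly under the reflection $y_i\mapsto -y_i$ or $y_i\mapsto -y_i-1$ is delicate. I would first verify the case $k=1$, where Warnaar's result is known, to fix the exact reflection before treating general $k$.
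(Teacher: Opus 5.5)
Your outline has the right ingredients: dual Jacobi--Trudi applied to each character, the reflection $\dot e_r=\dot e_{2n-r}$, row sorting to produce $(-1)^{\inv(\nu)}$, the summation-range lemma, and a parity involution on $y_k$. But the decisive step is missing, and the route you commit to fails once $k\ge2$.

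\textbf{The termwise identification is impossible.} You keep the sum outside and try to identify each single summand with $\pm t^{(\cdot)}\sp_{\lambda(\nu)}$. The factor $t^{-jy_i}$ depends on row and column at once, so a single summand is generally not one power of $t$ times a $t$-free polynomial. For $k=2$ and $y_1\neq y_2$, the two Leibniz terms carry $t^{-y_1-2y_2}$ and $t^{-2y_1-y_2}$ respectively, beyond the common factor $t^{\frac12K(y_1^2+y_2^2)}$, and their $t$-free coefficients are nonzero in general. Every term of Theorem~\ref{main-type-c}, by contrast, is a single power of $t$ times a character.

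\textbf{The repair you sketch also fails.} After $\dot e_r=\dot e_{2n-r}$ the row is $\dot e_{a_i+j}-\dot e_{b_i-j}$ with $a_i\neq b_i$ whenever $y_i\neq0$. This is not of the form $\pm(\dot e_{c+j}-\dot e_{c-j})$, and reparametrising $y_i$ or pairing it with another value cannot change that within one determinant. The reason is structural: in the corollary the two terms differ in the sign of the row index $i$, whereas in \eqref{eq: dual JT type C} they differ in the sign of the column index $j$.

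\textbf{The missing idea is Lemma~\ref{lem:powery}.} Since the summation set is $S_k$-invariant,
$\sum_y\det(t^{-jy_i}a_{ij}(y_i))=\sum_y t^{-\sum_j jy_j}\det(a_{ij}(y_j))$.
This pulls the $t$-power out of the determinant at the price of making column $j$ depend on $y_j$. Only in this column form do transposing, reflecting and sorting rows give, for the summand at $-y$, exactly $(-1)^{\inv(\nu)+\sum_j[y_j<0]}t^{\sum_j(\frac12Ky_j^2+jy_j)}\sp_{\lambda(\nu)}$ with $\nu_a=|Ky_a+a|-1$.

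For \eqref{cor-type-c-0} you actually had the tool in hand. Collapsing the sum into one determinant by row multilinearity, then re-expanding it by column multilinearity, proves precisely this switch over $\mathbb Z^k$ — but you set it aside. Checking $k=1$ will not reveal the problem, because a $1\times1$ determinant has no row--column mixing.

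\textbf{The second identity inherits this gap and adds two points.} First, $\{y:|y|\text{ even}\}$ is not a product set, so multilinearity alone no longer gives the row-to-column switch. You need the $S_k$-invariance argument of Lemma~\ref{lem:powery}, or you can write the indicator as $\frac12(1+(-1)^{|y|})$, which factorises over coordinates.

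Second, the involution must be applied in the column form. The paper uses $y_k\mapsto1-y_k$ on the range $y_k\le0$, which is the image of your restriction $y_k\ge0$ under $y\mapsto-y$. With $K=2k$, this map:
\begin{itemize}
\item fixes $t^{\frac12Ky_k^2-ky_k}$;
\item negates the $k$-th column $\dot e_{n-i+k-Ky_k}-\dot e_{n+i+k-Ky_k}$, by $\dot e_{n+r}=\dot e_{n-r}$;
\item flips $(-1)^{|y|}$.
\end{itemize}
So the summands $S(y)$ are invariant, and $\sum_{y_k\le0}S(y)$ becomes $\sum_{|y|\text{ even}}S(y)$. In your row form this pairing does not preserve summands, since $t^{ky^2-jy}$ is invariant under $y\mapsto1-y$ only for $j=k$.
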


Finally, we consider the $\mathrm{BC}_n$ specialisation. Its expansion is again in symplectic characters.

\begin{thm}
\label{thm-type-bc-0-char-all}
Let $n,k\geq1$. Then
\begin{equation}
\label{thm-type-bc-0-char}
        P_{(k^n)}^{\mathrm{BC}_n}(X;t,-t^{1/2},0)=
\sum_{y \in \mathbb{Z}^{k}}
(-1)^{|y| + \mathrm{inv}(\nu)+ \sum_{j=1}^k [y_j<0]}
t^{\sum_{j=1}^k \left( \frac{1}{2}Ky_j^2 + jy_j \right)}
\sp_{\lambda(\nu)}(X),
    \end{equation}
    where $K=2k+1$ and $\nu_a = |Ky_a+a|-1$ for $a=1,\ldots,k$.
In the sum, the entries of $\nu$ are required to be distinct and to lie in $\{0,\ldots,n+k-1\}$.
\end{thm}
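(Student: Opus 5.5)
We would prove \eqref{thm-type-bc-0-char} by the route we use for Theorems~\ref{thm-b-char} and~\ref{main-type-c}: write both sides as ratios of antisymmetrised quantities over the symplectic Weyl denominator, then match numerators. Specialising \eqref{eq:intro-bc-definition} at $\lambda=(k^n)$, $s_1=-t^{1/2}$, $s_2=0$ gives $v_{(k^n)}(t)=[n]_t!$ and kills the factor $\prod(1-s_1s_2t^{i-1})$ (it is empty since $\ell(\lambda)=n$). Writing
\[
\frac{1+t^{1/2}x_i}{1-x_i^2}
=\frac{x_i^{-1}(1+t^{1/2}x_i)}{x_i^{-1}-x_i},
\qquad
(x_i-x_j)(1-x_ix_j)=x_j\bigl(x_ix_j^{-1}\bigr)\cdots,
\]
the summand becomes a fixed rational function divided by the $\mathrm{C}_n$ Weyl denominator $\Delta^C(X)=\prod_i(x_i-x_i^{-1})\prod_{i<j}(x_i-x_j)(1-x_ix_j)x_i^{-1}x_j^{-1}$. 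Hence
\[
\Delta^C(X)\,P=\frac{1}{[n]_t!}\sum_{\sigma\in\mathbb Z_2\wr S_n}\sgn(\sigma)\,
\sigma\Bigl(\prod_i x_i^{n-i+1-k}(1+t^{1/2}x_i)\prod_{i<j}(1-tx_i^{-1}x_j)(1-tx_ix_j)\Bigr)
\]
up to a fixed monomial normalisation. The right-hand side is $\Delta^C(X)^{-1}\det(x_i^{\mu_j+n-j+1}-x_i^{-(\mu_j+n-j+1)})$ summed with the stated weights. So the statement reduces to an identity of Laurent polynomials, antisymmetric under $\mathbb Z_2\wr S_n$, which we check by comparing coefficients of strictly dominant monomials $x^{\alpha}$ with $\alpha_1>\dots>\alpha_n>0$.

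\textbf{Step 1: the $t$-product.} We expand the product $\prod_{i<j}(1-tx_i^{-1}x_j)(1-tx_ix_j)$ after antisymmetrisation. For this we use the $\mathrm{BC}_n$ Macdonald-type denominator identity, a Milne/Gustafson $C_n$ Vandermonde-type evaluation already used for the type B and C cases, in the form in which the $n$ variables of length $n$ are traded for $k$ "complementary" variables. The rectangular shape $(k^n)$ is what makes this duality $n\leftrightarrow k$ available; this is how the $\mathbb Z^k$ summation and the complement map $\mu\mapsto\mu^{\square n^k}$ in the definition of $\lambda(\nu)$ arise. Concretely, we apply the complementation lemmas of \S\ref{sec:lemmas.complements} to rewrite
\[
\sp_\mu(X)\det\bigl(\cdots\bigr)\ \text{ over } n \text{ variables}
\quad\longrightarrow\quad
\text{an alternant over } k \text{ letters } \nu_1,\dots,\nu_k\in\{0,\dots,n+k-1\}.
\]
The factor $(-1)^{\inv(\nu)}$ is the sign of sorting $\nu$.

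\textbf{Step 2: the linear factor.} The extra factor $\prod(1+t^{1/2}x_i)$ comes from $s_1=-t^{1/2}$. Combined with the $x_i\mapsto x_i^{-1}$ symmetry, it should turn the theta-function sum of the $\mathrm{C}_n$ case (modulus $K=2k+2$ or $2k$) into one with odd modulus $K=2k+1$. We will use the affine $\mathrm{BC}_k$ (or $A^{(2)}_{2k}$) Macdonald identity to produce
\[
\sum_{y\in\mathbb Z^k} t^{\sum_j\left(\frac12Ky_j^2+jy_j\right)}
\]
with exponents $|Ky_a+a|-1$. The sign $(-1)^{|y|+\sum_j[y_j<0]}$ should arise from reflecting $Ky_a+a\mapsto-(Ky_a+a)$ when $y_a<0$, together with the shift by $1$ coming from the factor $x_i^{-1}$ in the symplectic denominator.

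\textbf{Step 3: the summation range.} The restriction that the $\nu_a$ be distinct and lie in $\{0,\dots,n+k-1\}$ is handled by \S\ref{sec: Handling the summation range}. Terms with a repeated $\nu_a$ cancel because the alternant vanishes. Terms with $\nu_a\ge n+k$ vanish by degree.

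\textbf{Main obstacle.} The crux is Step 1/2: establishing the right Macdonald identity with $t$-weights for the odd modulus and tracking all signs. First we would try to verify the $k=1$ case against Warnaar's proof, which is known. Then we would reduce general $k$ to it by a Cauchy–Binet factorisation into $k$ copies.
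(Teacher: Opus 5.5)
Your outline has a genuine gap exactly where you place the ``main obstacle''. Steps~1 and~2 are the entire content of the theorem. You defer them to a ``$\mathrm{BC}_n$ Macdonald-type denominator identity'' in which the $n$ variables are ``traded for $k$ complementary variables'', and to an ``affine $\mathrm{BC}_k$ (or $A^{(2)}_{2k}$) Macdonald identity''. You neither state these identities nor show that they apply to the antisymmetrisation of the $t$-dependent product. As a result, the $\mathbb Z^k$-sum, the odd modulus $K=2k+1$, the quadratic exponents and the signs are asserted rather than derived. Contrary to what you say, the paper's type $\mathrm{B}$ and $\mathrm{C}$ proofs use no Milne/Gustafson evaluation.

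Your fallback, proving $k=1$ and reaching general $k$ by a ``Cauchy--Binet factorisation into $k$ copies'', cannot work. The $k$ indices are coupled through the distinctness of the $\nu_a$, the sorting sign $(-1)^{\inv(\nu)}$, and the single character $\sp_{\lambda(\nu)}$, which depends on all of $\nu$ at once. So the sum does not split into independent one-index sums.

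Your displayed formula for $\Delta^C(X)P$ is also wrong, though this is minor. It is off by the non-invariant monomial $\prod_i x_i^{n+1}$ inside the antisymmetrisation: with your $\Delta^C$, the $i$-th factor is $x_i^{-i-k}(1+t^{1/2}x_i)$, up to a global sign. Such a monomial cannot be absorbed into a normalisation.

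The missing idea is the collapse that the rectangular shape allows, after which everything is an elementary determinant expansion.

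\textbf{Collapse to one determinant.} Because $\prod_i x_i^{-k}$ is $S_n$-invariant, the $S_n$-part of the symmetrisation is $\sum_{\sigma\in S_n}\sigma\bigl(\prod_{i<j}\frac{tx_i-x_j}{x_i-x_j}\bigr)=v_{(k^n)}(t)$. This cancels the normalisation and leaves only the $2^n$ sign changes. Next, write $\prod_{i<j}(1-tx_ix_j)(x_i-x_j)$, up to an explicit factor, as the $\mathrm{D}_n$-type determinant $\det\bigl((\sqrt t x_i)^{j-1}+(\sqrt t x_i)^{1-j}\bigr)$. The sum over sign changes is then absorbed row by row into a single $n\times n$ determinant (Lemma~\ref{magical manipulation}).

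\textbf{Specialisation.} At $s_1=-t^{1/2}$, $s_2=0$, divide by the $\mathrm{C}_n$ denominator and apply elementary column operations. The numerator becomes $2t^{n/4}\det\bigl(p_+(j)\eta_i(k+j)+p_-(j)\eta_i(k+1-j)\bigr)$, with $\eta_i(m)=x_i^m-x_i^{-m}$ and $p_\pm(j)=t^{\mp(2j-1)/4}$.

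\textbf{Expansion.} Expand multilinearly in the columns and organise the terms by orientations of a graph on $[0,k+n]$ made of $k+1$ paths (Lemma~\ref{lem: noodle-doodle expansion 2.5a}).
\begin{itemize}
\item Orientations with a vertex of indegree two repeat a column and vanish.
\item The survivors are indexed by one root $|Ky_j+j|$ per component, $0\le j\le k$.
\item The folding $\eta_i(-a)=-\eta_i(a)$ gives the sign $(-1)^{|y|+\sum_j[y_j<0]}$.
\item The edge weights give $t^{-n^2/4+\sum_j(\frac12Ky_j^2+jy_j)}$, and $t^{-n^2/4}$ cancels the overall prefactor.
\item The root of the component of $0$ must be $0$, since $\eta_i(0)=0$.
\item The remaining $k$ roots are the omitted columns. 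Reordering them gives $(-1)^{\inv(\nu)}$, and Corollary~\ref{cor:useful.complementary} turns the resulting minor into $\sp_{\lambda(\nu)}$.
\end{itemize}

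No Macdonald identity is needed. A Cauchy--Binet formulation does exist, but it applies to this one determinant, factored as $(\eta_i(m))_{i,m}$ times a sparse $(n+k)\times n$ matrix whose maximal minors the orientation argument evaluates.
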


The corresponding determinant identity is the remaining conjecture of Warnaar.

\begin{cor}[{\cite[Conjecture 1.4]{ole}}]
\label{cor-type-bc}
Let $n,k\geq1$. Then
\begin{equation}
\label{cor-type-bc-0}
        P_{(k^n)}^{\mathrm{BC}_n}(X;t,-t^{1/2},0)=
    \sum_{y \in \mathbb{Z}^k}
\det_{1 \le i,j \le k} 
\left(
(-1)^{y_i}
t^{\frac{1}{2} K y_i^2 - jy_i}
\left(
\dot{e}_{\,n-i+j-K y_i}
-
\dot{e}_{\,n+i+j- Ky_i}
\right)
\right),
    \end{equation}
    where $K=2k+1$.
\end{cor}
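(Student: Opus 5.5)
The plan is to derive the identity from Theorem~\ref{thm-type-bc-0-char-all} by running the symplectic dual Jacobi--Trudi identity \eqref{eq: dual JT type C} backwards. The main difficulty is that the determinant in \eqref{cor-type-bc-0} carries the column-dependent weight $t^{-jy_i}$, so it is not simply $\prod_i(\text{row factor})$ times a Jacobi--Trudi determinant. The first step is therefore to expand each determinant over permutations and absorb the permutation into the summation variable.

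Write $A_{i,j}(y)=\dot e_{n-i+j-Ky}-\dot e_{n+i+j-Ky}$. Then the right-hand side of \eqref{cor-type-bc-0} equals
\[
\sum_{y\in\Z^k}\sum_{\sigma\in S_k}\sgn\sigma\prod_{i=1}^k(-1)^{y_i}t^{\frac12Ky_i^2-\sigma(i)y_i}A_{i,\sigma(i)}(y_i).
\]
For fixed $\sigma$, substitute $z_a=-y_{\sigma^{-1}(a)}$, which is a bijection of $\Z^k$. The weight becomes $\prod_a(-1)^{z_a}t^{\frac12Kz_a^2+az_a}$, which is exactly the weight in Theorem~\ref{thm-type-bc-0-char-all}. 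Resumming over $\sigma$ for fixed $z$ gives
\[
\sum_{z\in\Z^k}(-1)^{|z|}t^{\sum_a(\frac12Kz_a^2+az_a)}\det_{1\le i,a\le k}\bigl(E_i(p_a)\bigr),
\qquad p_a=Kz_a+a,
\]
where $E_i(p)=\dot e_{n-i+p}-\dot e_{n+i+p}$. In this form each column depends on a single parameter $p_a$. Since the sum over $z$ is finite, all rearrangements are legitimate.

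The second step uses the palindromic symmetry $\dot e_r=\dot e_{2n-r}$, which holds because $e_r(X,\overline X)$ is invariant under $X\leftrightarrow\overline X$. It gives $E_i(p)=\dot e_{n+i-p}-\dot e_{n-i-p}$, and also $E(-p)=-E(p)$. After transposing, row $a$ is $\bigl(\dot e_{m+i}-\dot e_{m-i}\bigr)_{i}$ with $m=n-p_a$, up to an index shift. This is precisely the shape of a row in \eqref{eq: dual JT type C} with $m=\lambda'_a-a$. The antisymmetry in $p$ accounts for the sign $(-1)^{[z_a<0]}$ and lets one replace $p_a$ by $|p_a|$. After fixing the precise unit shift in the indices, one should find that $\nu_a=|Kz_a+a|-1$, as in the theorem, governs the row.

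Next, sort the rows into decreasing order of $\nu$. This produces the sign $(-1)^{\inv(\nu)}$. Comparing with $\lambda(\nu)'_i=n-\mathrm{sort}(\nu)_{k+1-i}+i-1$, the sorted determinant is then exactly $\sp_{\lambda(\nu)}(X)$ by \eqref{eq: dual JT type C}.

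Finally, the terms excluded in the theorem must vanish. Repeated entries of $\nu$ give equal rows up to sign, hence a zero determinant. Entries of $\nu$ outside $\{0,\dots,n+k-1\}$ should give a zero row or reduce to the degenerate cases, using $\dot e_r=0$ for $r<0$ or $r>2n$; I expect this to be handled by the summation-range discussion in \S\ref{sec: Handling the summation range}.

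With these steps, the right-hand side coincides term by term with \eqref{thm-type-bc-0-char}, and Theorem~\ref{thm-type-bc-0-char-all} finishes the proof. I expect the main obstacle to be the bookkeeping in the second step: pinning down the exact index shift and the sign produced by the reflection $p\mapsto-p$, so that the sign $(-1)^{|y|+\sum_j[y_j<0]}$ comes out exactly.
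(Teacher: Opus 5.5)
Your proposal is correct and is essentially the paper's argument run in reverse. The paper starts from Theorem~\ref{thm-type-bc-0-char-all}, applies \eqref{eq: dual JT type C}, undoes the row sorting (cancelling $(-1)^{\inv(\nu)}$), uses $\dot e_{n+r}=\dot e_{n-r}$ together with the sign for $y_i<0$, substitutes $y_i\mapsto -y_i$, transposes, and then invokes Lemma~\ref{lem:powery}; in your version that lemma's role is played by the per-permutation reindexing $z_a=-y_{\sigma^{-1}(a)}$.

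One small slip: since $\lambda(\nu)'_r-r=n-\mathrm{sort}(\nu)_{k+1-r}-1$, the rows must be sorted into \emph{increasing} order of $\nu$, not decreasing. That is the sort whose sign is $(-1)^{\inv(\nu)}$, and with it no index shift is needed beyond $\nu_a=|p_a|-1$.
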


These results also complete the derivation of the
$q,t$-Rogers--Ramanujan identities from the finite Hall--Littlewood
identities in \cite[\S5.1]{ole}. Warnaar already gave independent proofs
of those Rogers--Ramanujan identities in \cite[\S5.3]{ole}; the results
above establish the conjectured determinant identities used in the
alternative derivation.

We briefly describe the common idea of the proofs.
Starting from \eqref{eq:intro-bc-definition} for $\lambda=(k^n)$, we
first obtain a single determinant expression.
For each of the six parameter specialisations, its multilinear expansion
can be described by orientations of a labelled graph whose components
are paths. Orientations with two edges directed towards the same vertex
give determinants with repeated columns and hence contribute zero.
The remaining orientations are determined by one root in each component.
The roots specify the omitted columns and therefore the partition
indexing the corresponding character, while the edge weights and the
column order determine the power of $t$ and the sign.
Applying the classical dual Jacobi--Trudi identities to the resulting
character expansions and rearranging the sums then gives
Corollaries~\ref{cor-b}, \ref{cor-type-c}, and~\ref{cor-type-bc}.

\medskip
\noindent\textbf{Outline of the paper.}
In \S\ref{sec:preliminaries} we recall the character formulas and
Hall--Littlewood polynomials needed in the proofs.
Section~\ref{sec:type-a} treats the case $r=n$ of
\eqref{eq:intro-warnaar-a} as an elementary example.
The auxiliary results are established in \S\ref{sec:lemmas};
in particular, \S\ref{sec:noodle-doodle} gives the common determinant
expansion and its six specialisations.
The character expansions and their Jacobi--Trudi consequences are proved
in \S\!\S\ref{sec:type-b}, \ref{sec:type-c}, and~\ref{sec:type-bc},
for types $\mathrm{B}_n$, $\mathrm{C}_n$, and $\mathrm{BC}_n$,
respectively.

\section{Preliminaries}\label{sec:preliminaries}
\subsection{Classical, odd orthogonal, symplectic and even orthogonal Schur polynomials}\label{subsec: c,oo,s,eo schur polynomials}
For the classical Lie algebras $\mathfrak{sl}(n+1)$ (type $\mathrm{A}_n$), $\mathfrak{so}(2n+1)$  (type $\mathrm{B}_n$), $\mathfrak{sp}(2n)$ (type $\mathrm{C}_n$), and $\mathfrak{so}(2n)$ (type $\mathrm{D}_n$) their characters can be expressed via \emph{Weyl's character formula}. We point to \cite{Humphreys,FulHar91} for a general reference.

Let $\bar{x}_i$ be a shorthand for $x_i^{-1}$.
The character of the irreducible representation labelled by the partition $\lambda$ satisfies, for each of the four classical types, corresponding to the classical, odd orthogonal, symplectic and even orthogonal Schur polynomials, the following formula:
\begin{align}
    s_\lambda(X) &= \frac{\det\limits_{1\le i,j\le n}(x_j^{\lambda_i+n-i})}{\det\limits_{1\le i,j\le n}(x_j^{n-i})},\tag{Weyl $\operatorname A_n$}\label{eq: Weyl character formula type A}\\
    \oo_\lambda(X) &= \frac{\det\limits_{1\le i,j\le n}(x_j^{\lambda_i+n-i+1/2}-\bar{x}_j^{\lambda_i+n-i+1/2})}{\det\limits_{1\le i,j\le n}(x_j^{n-i+1/2}-\bar{x}_j^{n-i+1/2})},\tag{Weyl $\operatorname B_n$}\label{eq: Weyl character formula type B}\\
    \sp_\lambda(X) &= \frac{\det\limits_{1\le i,j\le n}(x_j^{\lambda_i+n-i+1}-\bar{x}_j^{\lambda_i+n-i+1})}{\det\limits_{1\le i,j\le n}(x_j^{n-i+1}-\bar{x}_j^{n-i+1})},\tag{Weyl $\operatorname C_n$}\label{eq: Weyl character formula type C}\\
    \oe_\lambda(X) &= 
    \frac{{2} \displaystyle\det\limits_{1\le i,j\le n}(x_i^{\lambda_j+n-j}+\x_i^{\lambda_j+n-j})}
    {\displaystyle (1+\delta_{\lambda_n,0})
    \det\limits_{1\le i,j\le n}
    (x_i^{n-j}+\x_i^{n-j})}
    ,\tag{Weyl $\operatorname D_n$}\label{eq: Weyl character formula type D}
\end{align}
where $\delta_{i,j}\coloneqq[i=j]$ is the Kronecker delta. Note that for $\lambda_n>0$ the displayed $ \oe_\lambda(X)$ is the sum of two irreducible $\mathfrak{so}(2n)$ characters.

The denominator, sometimes called \emph{Weyl denominator}, in each of the four cases factorises into a product formula, and we make use of the following formulae multiple times throughout the later chapters.
\begin{align}\det_{1\le i,j\le n}(x_j^{n - i})
&= \prod_{1\le i<j\le n}(x_i-x_j)\tag{$\operatorname{A}_n$}\label{glde}\\
    \det_{1\le i,j\le n}(x_i^{n-j+1/2}-\x_i^{n-j+1/2})
    &= (-1)^{\binom{n}{2}}\prod_{i=1}^n\x_i^{n-1}\prod_{i=1}^n\left(x_i^{1/2}-\x_i^{1/2}\right)\prod_{1\leq i<j\leq n}(1-x_ix_j)(x_i-x_j),\tag{$\operatorname{B}_n$}\label{eq: Denom type B}\\
    \det_{1\le i,j\le n}(x_i^{n-j+1}-\x_i^{n-j+1})
&= (-1)^{\binom{n}{2}}\prod_{i=1}^n\x_i^{n-1}\prod_{i=1}^n(x_i-\x_i)\,\prod_{1\leq i<j\leq n}(1-x_ix_j)(x_i-x_j),\tag{$\operatorname{C}_n$}\label{eq: Denom type C}\\
\det_{1\le i,j\le n}(x_i^{n-j}+\x_i^{n-j})
&= {2}(-1)^{\binom{n}{2}} \prod_{i=1}^n\x_i^{n-1}\prod_{1\leq i<j\leq n}(1-x_ix_j)(x_i-x_j).\tag{$\operatorname{D}_n$}\label{eq: Denom type D}
\end{align}

\begin{rem}
    Notice that 
\begin{equation}\label{too many rows}
s_\lambda(X) = \sp_\lambda(X)  = \oo_\lambda(X) = \oe_\lambda(X) = 0, \quad
\text{if } n < \ell(\lambda).
\end{equation}
We also use the convention that 
\begin{equation}\label{eq: convention not a partiton}
s_\lambda(X) = \sp_\lambda(X)  = \oo_\lambda(X) = \oe_\lambda(X) = 0, \quad
\text{if } \lambda \textrm{ is not a partition}.
\end{equation}
\end{rem}

\subsection{Hall--Littlewood polynomials}\label{subsec: Hall Littlewood polynomials}
We let $S_n$ act naturally on polynomials in $X=(x_1,\dots,x_n)$. The \emph{ordinary Hall--Littlewood polynomial} $P_\lambda(X,t)$ is defined as
\begin{equation}
\label{eqn: expansion A}
    P_\lambda(X,t)=\frac{1}{v_\lambda(t)} \sum_{\sigma \in S_n} \sigma.\! \left(
    \prod_{i=1}^nx_i^{\lambda_i}
    \prod_{1\leq i<j\leq n} \frac{x_i-t x_j}{x_i-x_j}
    \right).
\end{equation}
The ordinary Hall--Littlewood polynomials play a central role in algebraic
combinatorics and representation theory: they encode the structure of Hall
algebras, enter the character theory of finite general linear groups, realize
spherical functions for general linear groups over non-archimedean local
fields, and, through the Kostka--Foulkes polynomials, provide $q$-analogues
of weight multiplicities with important combinatorial and geometric
interpretations~\cite{Macdonald1995,Green1955,Lusztig1983,Kato1982}.

\section{An illustrative example}\label{sec:type-a}

In all of our main results, we have $\lambda = (k^n)$, where $n$ is the number of variables in $X$. As an illustrative easy example of some of the techniques to be used, we prove the special case $r=n$ of Warnaar's \cite[Theorem 1.1]{ole}.
\begin{thm}
The type $\mathrm{A}_n$ Hall--Littlewood polynomial indexed by a rectangular partition of length $n$ satisfies the following dual Jacobi--Trudi formula:
\label{thm-type-a}
\[
P_{(k^n)}(X;t) = \sum_{\substack{y\in\Z^k\\|y|=0}} \det_{1\le i,j\le k}(t^{k\binom{y_i}{2}+iy_i} e_{n-i+j-ky_i}(X))\,.
\]
\end{thm}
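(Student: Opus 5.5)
The left-hand side collapses to $e_n(X)^k$, and on the right-hand side every summand except the one with $y=0$ vanishes. The plan is to check both facts directly; the second uses the dual Jacobi--Trudi identity \eqref{eq: dual JT type A} and a minimality argument on the summation range.

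\emph{Left-hand side.} For $\lambda=(k^n)$ the monomial $\prod_i x_i^{\lambda_i}=(x_1\cdots x_n)^k$ is symmetric, so it factors out of the sum in \eqref{eqn: expansion A}. Since $v_{(k^n)}(t)=v_{(0^n)}(t)$, we get
\[
P_{(k^n)}(X;t)=(x_1\cdots x_n)^k\,P_{(0^n)}(X;t)=e_n(X)^k ,
\]
using $P_{(0^n)}(X;t)=1$, the standard normalisation of Hall--Littlewood polynomials. By \eqref{eq: dual JT type A} with $\lambda'=(n^k)$, i.e.\ $\lambda=(k^n)$, we have $e_n(X)^k=s_{(k^n)}(X)=\det_{1\le i,j\le k}(e_{n-i+j}(X))$. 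This is exactly the $y=0$ summand on the right-hand side, where the power of $t$ is $t^0$. So it remains to show that every summand with $y\ne0$ and $|y|=0$ vanishes.

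\emph{Vanishing for $y\neq0$.} Fix $y$ with $|y|=0$. The factor $t^{k\binom{y_i}{2}+iy_i}$ depends only on the row, so it can be pulled out of the determinant. What remains is $\det_{1\le i,j\le k}(e_{a_i-i+j})$ with $a_i=n-ky_i$. Put $b_i=a_i-i=n-(ky_i+i)$; this determinant vanishes in each of three cases.
\begin{itemize}
\item If two of the $b_i$ coincide, two rows of the matrix are equal.
\item If some $b_i<-k$, then $b_i+j<0$ for all $1\le j\le k$, so that whole row is zero.
\item If some $b_i\ge n$, then $b_i+j>n$ for all $j\ge1$, and again the row is zero, since $e_r(X)=0$ for $r>n$.
\end{itemize}
Hence a nonzero summand requires the numbers $c_i:=ky_i+i$ to be pairwise distinct and to lie in $\{1,\dots,n+k\}$. (Alternatively, one can sort the rows and recognise $\pm s_\mu$ with $\mu'_1\le n$ via \eqref{eq: dual JT type A}, but the direct row argument suffices.)

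\emph{The key step.} Under these conditions,
\[
\sum_i c_i = k|y|+\binom{k+1}{2}=1+2+\cdots+k .
\]
The $c_i$ are $k$ distinct positive integers, so their sum is at least $1+2+\cdots+k$, with equality only if $\{c_1,\dots,c_k\}=\{1,\dots,k\}$. Moreover $c_i\equiv i\pmod k$, and the residues $i=1,\dots,k$ are pairwise distinct modulo $k$. Therefore $c_i$ must be the unique element of $\{1,\dots,k\}$ congruent to $i$, namely $c_i=i$, which forces $y_i=0$ for all $i$.

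So only the $y=0$ term survives, and the right-hand side equals $\det(e_{n-i+j}(X))=e_n(X)^k$, which matches the left-hand side. The only delicate point is checking the vanishing conditions carefully, in particular the bounds $-k\le b_i\le n-1$, so that the minimality argument applies. No real obstacle is expected; this is the "elementary example" the section announces.
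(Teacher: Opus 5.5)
Your proof is correct and follows the paper's route: you collapse the left-hand side to $(x_1\cdots x_n)^k$ via the symmetrisation identity (your $P_{(0^n)}=1$ is exactly the Macdonald identity the paper cites), pull the row factors $t^{k\binom{y_i}{2}+iy_i}$ out of the determinant, show that only $y=0$ survives, and finish with \eqref{eq: dual JT type A}. The one difference is that your distinctness and minimal-sum argument is more than you need: your third bullet alone suffices, since $y\neq0$ with $|y|=0$ forces some $y_i\le-1$, hence $b_i=n-ky_i-i\ge n$ and that row vanishes, which is precisely the paper's argument.
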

We note that the polynomial in the above equation is just one monomial.
\begin{proof}
Begin by expanding the Hall--Littlewood polynomial of type $\mathrm{A}_n$,
\[
P_{(k^n)}(X;t) = \frac{1}{[n]_t!}\sum_{\sigma\in S_n} \sigma.\!\left( \prod_{i=1}^n x_i^{\lambda_i}
 \prod_{i<j} \frac{x_i-t x_j}{x_i-x_j}
\right).
\]
When $\lambda = (k^n)$, then note that $\prod_{i=1}^nx_i^k$ is invariant under the action of the symmetric group. We get
\[
P_{(k^n)}(X;t) =\frac{\prod_{i=1}^n x_i^{k}}{[n]_t!}\sum_{\sigma\in S_n} \sigma.\!\left(
 \prod_{i<j} \frac{x_i-t x_j}{x_i-x_j}
\right)= \frac{\prod_{i=1}^nx_i^k}{[n]_t!} [n]_t!= \prod_{i=1}^n x_i^k = s_{(k^n)}(X)\,, 
\]
where we have used \cite[p. 207, (1.4)]{Macdonald1995}.

Now consider the right-hand side of the identity we aim to show, and factor out the power of $t$ to get
\begin{equation}
    \label{eq:power of t A}
\sum_{\substack{y\in\Z^k\\|y|=0}} \det_{1\le i,j\le k}(t^{k\binom{y_i}{2}+iy_i} e_{n-i+j-ky_i}(X))
= 
\sum_{\substack{y\in\Z^k\\|y|=0}} 
\Big( \prod_{i=1}^k t^{k\binom{y_i}{2}+iy_i} \Big)
\det_{1\le i,j\le k}(e_{n-i+j-ky_i}(X))\,.
\end{equation}
If $y\neq 0$ and $|y|=0$ then there exists an $y_i<0$. We then have that
$$n-i+r-ky_i\geq n-i+r+k>n,$$
for every column index $1\leq r\le k$. Thus the $i$-th row of the matrix in the determinant is zero and subsequently the determinant vanishes and only $y=0$ survives.
An application of the 
dual Jacobi--Trudi identity \eqref{eq: dual JT type A} gives that the right-hand side is $s_{(n^k)'}(X)$, as claimed.
\end{proof}

Two critical steps of this proof do not work for general type and require a finer analysis. The first, the step in~\eqref{eq:power of t A} where the power of $t$ from the right-hand side factors out of the determinant. In the general case, this will be accomplished by Lemma~\ref{lem:powery}. The other subtle step is the analysis of which determinants of an infinite sum vanish. The general tool in this case is presented as six lemmas in \S\ref{sec:noodle-doodle}.

We also need to derive an expression for the Hall--Littlewood polynomials of other types which is more amenable to our proof strategy. This is done in Lemma~\ref{magical manipulation}.
Finally, in order to control the partitions labelling the character expansions, we introduce Lemma~\ref{lem:flip}.

\section{A handful of useful lemmas}\label{sec:lemmas}
In this section, we present various auxiliary lemmas which are the main ingredients of the proofs of our main results stated in \S\ref{sec:main results} presented in \S\!\S\ref{sec:type-b}, \ref{sec:type-c} and \ref{sec:type-bc}. In \S\ref{subsec: First Lemma} we provide an alternative expression of the $\mathrm{BC}_n$ Hall--Littlewood polynomials in terms of a determinant. In the special cases of our theorems presented in \S\ref{sec:main results}, this determinant can be expanded as a sum of simpler determinants using a combinatorial argument on certain graphs. This machinery is the core of our proof idea and is presented in \S\ref{sec:noodle-doodle}. To relate the character expansions of the specialisations of the $\mathrm{BC}_n$ Hall--Littlewood polynomials to the Jacobi--Trudi identities conjectured by Warnaar, we need a determinant trick provided in \S\ref{subsec: pulling out of det} and a way of identifying the partitions that come out of our calculations presented in \S\ref{sec:lemmas.complements} as well as a way of handling the respective summation ranges explained in \S\ref{sec: Handling the summation range}. 

\subsection{The $\mathrm{BC}_n$ Hall--Littlewood polynomials as determinants}\label{subsec: First Lemma}

\begin{lem}\label{magical manipulation}
Let $n,k\in\Z_{>0}$. The (multiparameter) Hall--Littlewood polynomial of type $\mathrm{BC}_n$ indexed by a rectangular partition satisfies
    \begin{multline*}
    P_{(k^n)}^{\mathrm{BC}_n}(X;t,s_1,s_2)= \frac{\sqrt{t}^{\binom{n}{2}}}{2}
    \prod_{i=1}^nx_i^{n-1}\prod_{1\leq i<j\leq n}\frac{1}{(1-x_ix_j)(x_i-x_j)}\\\times\det_{1\le i,j\le n}\left(\frac{(1-s_1x_i)(1-s_2x_i)}{1-x_i^2}
    ((\sqrt{t})^{j-1}x_i^{j-1-k}+
    (\sqrt{t})^{-j+1}x_i^{-j+1-k})\right.\\\left.-\frac{(s_1-x_i)(s_2-x_i)}{1-x_i^2}
    ((\sqrt{t})^{j-1}x_i^{-j+1+k}
    +(\sqrt{t})^{-j+1}x_i^{j-1+k})\right).
\end{multline*}
\end{lem}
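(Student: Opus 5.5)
The plan is to reduce the hyperoctahedral symmetrisation in \eqref{eq:intro-bc-definition} to an $S_n$-symmetrisation, where Macdonald's identity (already used in the proof of Theorem~\ref{thm-type-a}) applies, followed by a $\mathbb Z_2^n$-symmetrisation that can be packaged into a determinant via the type $\mathrm D_n$ Weyl denominator \eqref{eq: Denom type D}.

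\textbf{Step 1: simplify the prefactor.} For $\lambda=(k^n)$ we have $\ell(\lambda)=n$, so the factor $\prod_{i=1}^{n-\ell(\lambda)}(1-s_1s_2t^{i-1})$ is empty, and $v_\lambda(t)=[n]_t!$.

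\textbf{Step 2: split the group sum.} Write every $\sigma\in\mathbb Z_2\wr S_n$ as $\sigma=\varepsilon\pi$ with $\pi\in S_n$ and $\varepsilon\in\{\pm1\}^n$ acting by $x_i\mapsto x_i^{\varepsilon_i}$. Put $f(x)=x^{-k}\frac{(1-s_1x)(1-s_2x)}{1-x^2}$. Then $\prod_i f(x_i)$ is $S_n$-invariant, and the factor $\prod_{i<j}(1-tx_ix_j)$ is $S_n$-symmetric. Hence the inner $S_n$-sum is
\[
\prod_i f(x_i)\prod_{i<j}\frac{1-tx_ix_j}{1-x_ix_j}\sum_{\pi\in S_n}\pi.\!\left(\prod_{i<j}\frac{tx_i-x_j}{x_i-x_j}\right),
\]
and the last sum equals $[n]_t!$ by \cite[p.~207, (1.4)]{Macdonald1995}. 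This cancels $v_\lambda(t)$ and leaves
\[
P_{(k^n)}^{\mathrm{BC}_n}=\sum_{\varepsilon}\varepsilon.\!\left(\prod_i f(x_i)\prod_{i<j}\frac{1-tx_ix_j}{1-x_ix_j}\right).
\]

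\textbf{Step 3: introduce the $\sqrt t$-deformed Vandermonde.} Apply \eqref{eq: Denom type D} with the variables $\sqrt t\,z_i$ to get
\[
\det_{1\le i,j\le n}\bigl(u_j(z_i)\bigr)=2(-1)^{\binom n2}\,\sqrt t^{\binom n2-n(n-1)}\prod_i z_i^{-(n-1)}\prod_{i<j}(1-tz_iz_j)(z_i-z_j),
\]
where $u_j(z)=(\sqrt t)^{j-1}z^{j-1}+(\sqrt t)^{-j+1}z^{-j+1}$. Since $\prod_{i<j}(1-tz_iz_j)$ is, up to these explicit factors, equal to $\det(u_j(z_i))/\prod_{i<j}(z_i-z_j)$, I would multiply and divide by $\prod_{i<j}(z_i-z_j)$ with $z=x^{\varepsilon}$. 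This rewrites each $\varepsilon$-term as
\[
(\text{explicit factor})\cdot\prod_i f(x_i^{\varepsilon_i})\cdot\frac{\det(u_j(x_i^{\varepsilon_i}))}{\varepsilon.\!\left(\prod_{i<j}(x_i-x_j)(1-x_ix_j)\right)}.
\]

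\textbf{Step 4: extract a common denominator.} Show that $\varepsilon.\!\left(\prod_{i<j}(x_i-x_j)(1-x_ix_j)\prod_i x_i^{-(n-1)}\right)$ equals the unflipped expression times the sign $\prod_i\varepsilon_i$. This follows because the $\mathrm D_n$ Weyl denominator is anti-invariant under each single inversion $x_i\mapsto x_i^{-1}$, as can be read off from the determinant side of \eqref{eq: Denom type D}: inverting $x_i$ negates row $i$ of $\det(x_i^{n-j}-\bar x_i^{\,n-j})$. I would cross-check this against the product side directly. The result is that the denominator becomes the common factor
\[
\prod_i x_i^{n-1}\prod_{i<j}\frac{1}{(1-x_ix_j)(x_i-x_j)}
\]
times signs $\prod_i\varepsilon_i$.

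\textbf{Step 5: recombine by multilinearity.} The signed factor $\varepsilon_i f(x_i^{\varepsilon_i})$ attached to row $i$ equals
\[
\frac{(1-s_1x_i)(1-s_2x_i)}{1-x_i^2}x_i^{-k}\ \text{ if }\varepsilon_i=1,\qquad -\frac{(s_1-x_i)(s_2-x_i)}{1-x_i^2}x_i^{k}\ \text{ if }\varepsilon_i=-1,
\]
using $f(x^{-1})=-x^{k}\frac{(s_1-x)(s_2-x)}{1-x^2}$. Absorbing these into row $i$ of $\det(u_j(x_i^{\varepsilon_i}))$ and summing over $\varepsilon$ by multilinearity in rows yields exactly the determinant in the statement. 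The constants $\tfrac12$ and $\sqrt t^{\binom n2}$ then come from the $2$ and the powers of $\sqrt t$ in Step~3.

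\textbf{Main obstacle.} The hard part is the bookkeeping in Steps~3 and~4: tracking the signs $(-1)^{\binom n2}$, the monomials $x_i^{\pm(n-1)}$, and the powers of $\sqrt t$ under flips, so that all $\varepsilon$-dependence sits in the rows and the final constant is precisely $\sqrt t^{\binom n2}/2$. I would verify these constants first for $n=1$ and $n=2$.
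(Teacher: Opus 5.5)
Your route is the paper's: split $\mathbb Z_2\wr S_n$ as $\{\pm1\}^n\times S_n$, remove the $S_n$-sum with Macdonald's identity, rewrite $\prod_{i<j}(1-tx_ix_j)$ using \eqref{eq: Denom type D} at $\sqrt{t}\,x_i$, and recombine the $2^n$ terms by multilinearity in the rows. However, the sign analysis in Steps~4 and~5 is wrong, and you reach the stated determinant only because two errors cancel.

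\textbf{Step 4 is false.} The function $D(X)=\prod_i x_i^{-(n-1)}\prod_{i<j}(x_i-x_j)(1-x_ix_j)$ is \emph{invariant} under every inversion $x_i\mapsto x_i^{-1}$, not anti-invariant; for $n=1$ it is just $1$. Indeed
$(x_i^{-1}-x_j)(1-x_i^{-1}x_j)=x_i^{-2}(1-x_ix_j)(x_i-x_j)$ and $(x_i-x_j^{-1})(1-x_ix_j^{-1})=x_j^{-2}(1-x_ix_j)(x_i-x_j)$.
So inverting $x_i$ multiplies $\prod_{i<j}(x_i-x_j)(1-x_ix_j)$ by exactly $x_i^{-2(n-1)}$, with no sign. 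This is the identity the paper records at $X=X^\alpha$, and the monomial is cancelled by $x_i^{-(n-1)}\mapsto x_i^{n-1}$. Your justification misreads \eqref{eq: Denom type D}. The $\mathrm{D}_n$ denominator is $\det(x_i^{n-j}+\bar x_i^{\,n-j})$, whose rows are \emph{fixed} by inversion. The version $\det(x_i^{n-j}-\bar x_i^{\,n-j})$ has a zero last column and vanishes identically. Anti-invariance belongs to the $\mathrm{B}_n$ and $\mathrm{C}_n$ denominators, and comes from their extra factors $\prod_i(x_i^{1/2}-\bar x_i^{1/2})$ and $\prod_i(x_i-\bar x_i)$.

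\textbf{Step 5 contains the compensating slip.} By your own (correct) formula $f(x^{-1})=-x^k\frac{(s_1-x)(s_2-x)}{1-x^2}$, the factor $\varepsilon_if(x_i^{\varepsilon_i})$ for $\varepsilon_i=-1$ equals $+x_i^k\frac{(s_1-x_i)(s_2-x_i)}{1-x_i^2}$, not its negative. Carried out consistently, your Steps~4--5 would put the wrong sign in front of the second term of every matrix entry.

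\textbf{The repair.} Drop the $\varepsilon_i$ altogether. Since $\varepsilon.D=D$, the factor $1/D=\prod_ix_i^{n-1}\prod_{i<j}\frac{1}{(1-x_ix_j)(x_i-x_j)}$ comes out of the sum unchanged, and row $i$ carries $f(x_i^{\varepsilon_i})$ itself. The minus sign in the statement then comes entirely from $f(x^{-1})$.

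\textbf{A constant error in Step 3.} The factor $(-1)^{\binom{n}{2}}$ should not be there. With $u_j$ indexed by $j-1$, the columns of \eqref{eq: Denom type D} are reversed, which cancels that sign. The correct formula, as in the paper, is $\det(u_j(z_i))=2(\sqrt{t})^{-\binom{n}{2}}\prod_iz_i^{-(n-1)}\prod_{i<j}(1-tz_iz_j)(z_i-z_j)$. Keeping the sign would leave a stray $(-1)^{\binom{n}{2}}$ in the final constant, which your planned check at $n=2$ would have exposed.
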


\begin{proof}
The underlying set of $\Z_2\wr S_n$ is $\Z_2^n\times S_n $.
Split the sum in the expression \eqref{eq:intro-bc-definition}
over the wreath product accordingly,
\begin{multline*}
    P_{(k^n)}^{\mathrm{BC}_n}(X;t,s_1,s_2) = \frac{1}{ v_{(k^n)}(t)} \\
    \times \sum_{\alpha \in \{\pm 1\}^n}\left[\sum_{\sigma\in S_n}\sigma.\!\left(\prod_{i=1}^n\x_i^{k}\frac{(1-s_1x_i)(1-s_2x_i)}{1-x_i^2}\prod_{1\leq i<j\leq n}\frac{(tx_i-x_j)(1-tx_ix_j)}{(x_i-x_j)(1-x_ix_j)}\right)\right]_{X=X^\alpha},
\end{multline*}
where we denote $X^\alpha=(x_1^{\alpha_1},\dots,x_n^{\alpha_n})$ for $\alpha\in \{\pm1\}^n$ and note that $v_{(k^n)}(t)= \prod_{i=1}^n \frac{1-t^i}{1-t}$.
We rewrite the sum over $S_n$ by factoring out terms that are invariant by permutations. The sum becomes
\begin{multline*}
    \prod_{i=1}^n\x_i^{k}\frac{(1-s_1x_i)(1-s_2x_i)}{1-x_i^2}\prod_{1\leq i<j\leq n}\frac{1-tx_ix_j}{1-x_ix_j}\sum_{\sigma\in S_n}\sigma.\!\left(\prod_{1\leq i<j\leq n}\frac{tx_i-x_j}{x_i-x_j}\right)\\
    =
    \prod_{i=1}^n\x_i^{k}\frac{(1-s_1x_i)(1-s_2x_i)}{1-x_i^2}\prod_{1\leq i<j\leq n}\frac{1-tx_ix_j}{1-x_ix_j}v_{(k^n)}(t) \,,
\end{multline*}
where we have used~\cite[p.~207, (1.4)]{Macdonald1995}. The right-most term cancels the prefactor $1/v_{(k^n)}(t)$ and we obtain the following formula for the Hall--Littlewood polynomial:
\[
    P_{(k^n)}^{\mathrm{BC}_n}(X;t,s_1,s_2) = \sum_{\alpha \in \{\pm 1\}^n}\left[\prod_{i=1}^n\x_i^{k}\frac{(1-s_1x_i)(1-s_2x_i)}{1-x_i^2}\prod_{1\leq i<j\leq n}\frac{1-tx_ix_j}{1-x_ix_j} \right]_{X=X^\alpha}.
\]

Now observe that for any $\alpha\in \{\pm1\}^n$ we have
\begin{equation*}
    \left[\prod_{1\leq i<j\leq n}\frac{1}{(x_i-x_j)(1-x_ix_j)}\right]_{X=X^\alpha}=\frac{\prod_{i\in\alpha^-}x_i^{2(n-1)}}{\prod_{1\leq i<j\leq n}(x_i-x_j)(1-x_ix_j)}\,,
\end{equation*}
where $\alpha^-=\{i\in [n]\mid \alpha_i=-1\}$. Dividing and multiplying the Hall--Littlewood polynomial by this expression gives
\begin{align*}
    P_{k^n}^{\mathrm{BC}_n}(X;&t,s_1,s_2)=\frac{1}{\prod_{1\leq i<j\leq n}(x_i-x_j)(1-x_ix_j)}\\
    &\times \sum_{\alpha \in \{\pm 1\}^n}
    \prod_{i\in\alpha^-}x_i^{2(n-1)}
    \left[\prod_{i=1}^n\x_i^{k}\frac{(1-s_1x_i)(1-s_2x_i)}{1-x_i^2}\prod_{1\leq i<j\leq n}
    {(1-tx_ix_j)}{(x_i-x_j)}\right]_{X
    =X^\alpha}\\& =
    \frac{\prod_{i=1}^n x_i^{n-1}}{\prod_{1\leq i<j\leq n}(x_i-x_j)(1-x_ix_j)}
    \\&\times\sum_{\alpha \in \{\pm 1\}^n}
    \left[\prod_{i=1}^n\x_i^{k+n-1}
    \frac{(1-s_1x_i)(1-s_2x_i)}{1-x_i^2}
    \prod_{1\leq i<j\leq n}(1-tx_ix_j)(x_i-x_j)\right]_{X=X^\alpha}.
\end{align*}
In the last equality we have used the following simplification,
\begin{multline*}
    \prod_{i\in\alpha^-} x_i^{2(n-1)}
\left[
\prod_{i=1}^n\x_i^{k}
\right]_{X=X^\alpha}
= \left(\prod_{i=1}^nx_i^{n-1}\prod_{i=1}^nx_i^{-n+1}\right)\prod_{i\in\alpha^-} x_i^{n-1}\prod_{i\in\alpha^-} x_i^{n-1}\left[
\prod_{i=1}^n\x_i^{k}
\right]_{X=X^\alpha}\\= \prod_{i=1}^nx_i^{n-1}\left(\prod_{i=1}^nx_i^{-n+1}\prod_{i\in\alpha^-} x_i^{n-1}\right)\prod_{i\in\alpha^-} x_i^{n-1}\left[
\prod_{i=1}^n\x_i^{k}
\right]_{X=X^\alpha}\\=\prod_{i=1}^nx_i^{n-1}\prod_{i\in\alpha^+} x_i^{-n+1}\prod_{i\in\alpha^-} x_i^{n-1}\left[
\prod_{i=1}^n\x_i^{k}
\right]_{X=X^\alpha}=\prod_{i=1}^n x_i^{n-1} \left[\prod_{i=1}^n\x_i^{k+n-1}\right]_{X=X^\alpha},
\end{multline*}
where we first multiply the equation with $1=\prod_{i=1}^nx_i^{n-1}\prod_{i=1}^nx_i^{-n+1}$, then regroup terms and use that $\alpha^{-}$ and $\alpha^+$ partition $[n]=\alpha^-\sqcup \alpha^+$ to get that $\prod_{i=1}^nx_i^{-n+1}\prod_{i\in\alpha^-} x_i^{n-1}=\prod_{i\in\alpha^+} x_i^{-n+1}$ and then use it once again to distribute the two factors over the expression in the square brackets in the last equality.

Setting $x_i=x_i\sqrt{t}$ in the Weyl denominator of type \ref{eq: Denom type D} we obtain
\begin{equation}
   \det_{1\leq i,j\leq n}\left(
    (x_i\sqrt{t})^{j-1}+  (\x_i/\sqrt{t})^{j-1}\right)= 2 (\sqrt{t})^{-\binom{n}{2}}
    \prod_{i=1}^n\x_i^{n-1}\prod_{1\leq i<j\leq n}(1-tx_ix_j)(x_i-x_j).
\end{equation}
We use this factorisation to find that $P_{k^n}^{\mathrm{BC}_n}(X;t,s_1,s_2)$ equals
\begin{multline*}
\frac{\sqrt{t}^{\binom{n}{2}}}{2}
    \prod_{i=1}^nx_i^{n-1}\prod_{1\leq i<j\leq n}\frac{1}{(1-x_ix_j)(x_i-x_j)}\\\times
\sum_{\alpha \in \{\pm 1\}^n}
    \left[\prod_{i=1}^n\x_i^{k}
    \frac{(1-s_1x_i)(1-s_2x_i)}{1-x_i^2}
    \det_{1\le i,j\le n}\left(
    (x_i\sqrt{t})^{j-1}+  (\x_i/\sqrt{t})^{j-1}\right)\right]_{X=X^\alpha}.
\end{multline*}
By linearity of the determinant this is equal to 
\begin{multline*}
 \frac{\sqrt{t}^{\binom{n}{2}}}{2}
    \prod_{i=1}^nx_i^{n-1}\prod_{1\leq i<j\leq n}\frac{1}{(1-x_ix_j)(x_i-x_j)}\\\times
    \det_{1\le i,j\le n}\left(
    \frac{(1-s_1x_i)(1-s_2x_i)}{1-x_i^2}
    (x_i^{j-1-k}(\sqrt{t})^{j-1}+  
    \x_i^{j-1+k}(\sqrt{t})^{-j+1})\right.
    \\\left.-
    \frac{(x_i-s_1)(x_i-s_2)}{1-x_i^2}
    (\x_i^{j-1-k}(\sqrt{t})^{j-1}+  
    x_i^{j-1+k}(\sqrt{t})^{-j+1})\right)
\end{multline*}
and thus we are finished.
\end{proof}

\subsection{A determinant trick}\label{subsec: pulling out of det}
The following lemma shows that $t$ can be factored out of the determinant in Corollaries \ref{cor-b}, \ref{cor-type-c} and \ref{cor-type-bc}.

\begin{lem}
\label{lem:powery}
Let $A = (a_{ij}(x))_{1\le i,j\le k}$ be a matrix of univariate functions.
Let $R\subseteq\Z^k$ be a finite $S_k$-invariant subset of $\Z^k$.
Then,
   \[
    \sum_{y \in R} \det_{1 \le i,j \le k}
    (
    t^{jy_i} a_{i,j}(y_i)
    ) =  \sum_{y \in R} t^{\sum_{j=1}^k j y_j} \det_{1 \le i,j \le k} (
    a_{i,j}(y_j)).
    \]
\end{lem}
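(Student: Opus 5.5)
Expand the determinant on the left via the Leibniz formula and then reindex using the $S_k$-invariance of $R$. For fixed $y$,
\[
\det_{1\le i,j\le k}\bigl(t^{jy_i}a_{i,j}(y_i)\bigr)=\sum_{\pi\in S_k}\sgn(\pi)\prod_{i=1}^k t^{\pi(i)y_i}\,a_{i,\pi(i)}(y_i),
\]
so the power of $t$ attached to the term indexed by $\pi$ is $t^{\sum_i \pi(i)y_i}=t^{\sum_j j\,y_{\pi^{-1}(j)}}$. The aim is to move the dependence on $\pi$ out of the exponent and into the arguments of the $a_{i,j}$ by replacing $y$ with a permuted vector. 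Since $R$ is finite, all sums are finite and may be interchanged freely.

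For each fixed $\pi$, substitute $y=w\circ\pi$, i.e. $y_i=w_{\pi(i)}$, equivalently $w_j=y_{\pi^{-1}(j)}$. Because $R$ is $S_k$-invariant, $y\mapsto w$ is a bijection of $R$ onto itself. Under this substitution the exponent becomes $\sum_j j\,w_j$, which is independent of $\pi$. The remaining product becomes $\prod_i a_{i,\pi(i)}(w_{\pi(i)})$. Hence
\[
\sum_{y\in R}\det\bigl(t^{jy_i}a_{i,j}(y_i)\bigr)=\sum_{w\in R}t^{\sum_j jw_j}\sum_{\pi\in S_k}\sgn(\pi)\prod_{i=1}^k a_{i,\pi(i)}(w_{\pi(i)}).
\]

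The inner sum is the Leibniz expansion of $\det_{1\le i,j\le k}\bigl(a_{i,j}(w_j)\bigr)$, because the $(i,\pi(i))$ entry of that matrix is $a_{i,\pi(i)}(w_{\pi(i)})$. Renaming $w$ to $y$ gives the right-hand side of the lemma.

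The only delicate step is keeping the direction of the permutation straight, so that the column index $j$ in the exponent matches the argument $w_j$ of the column-$j$ entries. I will check this on $k=2$ with $\pi$ the transposition, where the term $t^{2y_1+y_2}a_{12}(y_1)a_{21}(y_2)$ should become $t^{w_1+2w_2}a_{12}(w_2)a_{21}(w_1)$. No other obstacle is expected, since finiteness of $R$ makes every interchange of sums legitimate.
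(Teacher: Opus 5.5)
Your proof is correct and uses the same idea as the paper: expand by Leibniz, then reindex the sum over $R$ using its $S_k$-invariance, so that the permutation moves from the exponent of $t$ into the arguments of the $a_{i,j}$. The only difference is bookkeeping: you apply the bijection $y_i=w_{\pi(i)}$ of $R$ separately for each $\pi$, which is a little cleaner than the paper's decomposition of $R$ into sorted orbit representatives weighted by $1/|\mathbf{Stab}(x)|$.
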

\begin{proof}
    Expand the left-hand side according to the Leibniz formula of the determinant,
    \[
     \sum_{y \in R} \sum_{\sigma \in S_k}  \mathrm{sgn}(\sigma)  \left( \prod_{j=1}^k
    t^{jy_{\sigma(j)}} a_{\sigma(j),j}(y_{\sigma(j)})
    \right).
    \]
Let $\mathrm{sort}(R)=\{y\in R\mid y_1\geq \dots\geq y_k\}$ and define an action of $S_k$ on $R$ by $(\mu\cdot y)_i=y_{\mu(i)}$. Observe that any $y\in R$ can be written as $y=\mu\cdot x$ for some $x\in \mathrm{sort}(R) $ and some $\mu\in S_k$. Conversely from one $x\in \mathrm{sort}(R) $ we can obtain the same $y\in R$ multiple times by acting with different $\mu$'s. The number of times this can happen is given by the size of the stabilizer of $x$ under this action and denoted by $m(x)=\vert\mathbf{Stab}(x)\vert$. Using this observation, we can rewrite the above sum as
    \[
     \sum_{x \in \mathrm{sort}(R)}\frac{1}{m(x)}\sum_{\mu\in S_k} \sum_{\sigma \in S_k}  \mathrm{sgn}(\sigma)  \left( \prod_{j=1}^k
    t^{jx_{\mu(\sigma(j))}} a_{\sigma(j),j}(x_{\mu(\sigma(j))})
    \right).
    \]
Summing over all $\rho=\mu\circ \sigma$ instead of $\mu\in S_k$, we obtain
    \[
     \sum_{x \in \mathrm{sort}(R)}\frac{1}{m(x)}\sum_{\rho\in S_k} \sum_{\sigma \in S_k}  \mathrm{sgn}(\sigma)  \left( \prod_{j=1}^k
    t^{jx_{\rho(j)}} a_{\sigma(j),j}(x_{\rho(j)})
    \right).
    \]
The inner sum now clearly is a determinant and this is equal to
\begin{multline*}
     \sum_{x \in \mathrm{sort}(R)}\frac{1}{m(x)}\sum_{\rho\in S_k} \prod_{j=1}^k
    t^{jx_{\rho(j)}} \det_{1\leq i,j\leq k}\left( a_{i,j}(x_{\rho(j)})
    \right)
    =\sum_{y\in R}\prod_{j=1}^k
    t^{jy_j} \det_{1\leq i,j\leq k}\left( a_{i,j}(y_j)
    \right)\\=\sum_{y \in R} t^{\sum_{j=1}^k j y_j} \det_{1 \le i,j \le k} (
    a_{i,j}(y_j)),
\end{multline*}
    where the first equality again follows from the definition of the group action. This proves the lemma.
\end{proof}

\subsection{Determinant expansions}\label{sec:noodle-doodle}

In this section, we establish six determinant expansion formulae that will
be used in the proofs of Theorems~\ref{thm-b-char}, \ref{main-type-c}, and
\ref{thm-type-bc-0-char-all}. Since the proofs follow essentially the same
strategy and differ only in the labelling of the underlying graphs, we
first prove a general expansion and then give the six specialisations.
Throughout the graph argument, $n,m,k$ are integers with $k\geq1$ and
$0\leq m\leq n$. The graphs with $m=0$ are allowed. All matrices and functions take values in a field of
characteristic zero. We use $[a,b]=\{a,a+1,\ldots,b\}$ when $b-a$ is a
nonnegative integer, and $[a,b]=\varnothing$ when $b<a$. Products over an
empty set are~$1$.

In its simplest form, the graph has vertex set $[0,k+m]$ and $m$ labelled
edges: $k+j$ is adjacent to $k-j$ for
$j=1,\ldots,\min(m,k-1)$, while $k+j$ is adjacent to $-k+j$ for
$j=k,\ldots,m$. The edges arising in this way are labelled with~$j$.
Note that the second type of edges only exists if $k\leq m$.
The graph is denoted by $\Lambda_{m,k,0,0}$ and illustrated in Figure~\ref{fig:example of Lambda graph}. Its connected components are
paths, each containing exactly one vertex in $[0,k]$, which we call its
\emph{center}.
\begin{figure}[h]
    \centering
      \begin{tikzpicture}[- ,auto ,node distance =0.85 cm and 2.1cm ,on grid ,
semithick ,
state/.style ={text=blue , minimum width =0.25 cm},
center/.style = {red, minimum width =0.25 cm}]
\node[center] (A) {$\mathbf{0}$};
\node[center] (B) [below =of A] {$\mathbf{1}$};
\node[center] (C) [below =of B] {$\mathbf{2}$};
\node[center] (D) [below =of C] {$\mathbf{3}$};
\node[state] (E) [right =of A] {$6$};
\node[state] (F) [left =of B] {$5$};
\node[state] (G) [left =of C] {$4$};
\node[state] (H) [right =of B] {$7$};
\node[state] (I) [right =of C] {$8$};
\node[state] (J) [right =of D] {$9$};
\node[state] (L) [left =of F] {$11$};
\node[state] (M) [left =of G] {$10$};
\node[state] (N) [right =of E] {$12$};
\node[state] (O) [right =of H] {$13$};
\node[state] (P) [right =of I] {$14$};
\node[state] (Q) [right =of J] {$15$};
\node[state] (R) [left =of M] {$16$};
\node[state] (S) [left =of L] {$17$};
\node[state] (T) [right =of N] {$18$};
\node[state] (U) [right =of O] {$19$};

\draw (A) -- (E) node [midway, fill=white, anchor=south] {\scriptsize $3$};
\draw[color=green!40!black, dashed, anchor=south] (B) -- (F) node [midway, fill=white] {\scriptsize $2$};
\draw[color=green!40!black, dashed, anchor=south] (C) -- (G) node [midway, fill=white] {\scriptsize $1$};
\draw (B) -- (H) node [midway, fill=white, anchor=south] {\scriptsize $4$};
\draw (C) -- (I) node [midway, fill=white, anchor=south] {\scriptsize $5$};
\draw (D) -- (J) node [midway, fill=white, anchor=south] {\scriptsize $6$};
\draw (F) -- (L) node [midway, fill=white, anchor=south] {\scriptsize $8$};
\draw (G) -- (M) node [midway, fill=white, anchor=south] {\scriptsize $7$};
\draw (E) -- (N) node [midway, fill=white, anchor=south] {\scriptsize $9$};
\draw (H) -- (O) node [midway, fill=white, anchor=south] {\scriptsize $10$};
\draw (I) -- (P) node [midway, fill=white, anchor=south] {\scriptsize $11$};
\draw (J) -- (Q) node [midway, fill=white, anchor=south] {\scriptsize $12$};
\draw (M) -- (R) node [midway, fill=white, anchor=south] {\scriptsize $13$};
\draw (L) -- (S) node [midway, fill=white, anchor=south] {\scriptsize $14$};
\draw (N) -- (T) node [midway, fill=white, anchor=south] {\scriptsize $15$};
\draw (O) -- (U) node [midway, fill=white, anchor=south] {\scriptsize $16$};
\end{tikzpicture} 
    \caption{The graph $\Lambda_{16,3,0,0}$. To help the reader, type-1 edges are green and dashed, and centers of the connected components are in red and bold.}
    \label{fig:example of Lambda graph}
\end{figure}

In its general form, the graph has two additional parameters
$h^+\in\{0,\frac12\}$ and $h^-\in\{0,-\frac12\}$.

\begin{defn}[The labelled graph]\label{def: labelled graph}
Set $\bar k=k+\lfloor h^++h^-\rfloor$ and $K=2(k+h^+)$.
The graph $\Lambda_{m,k,h^+,h^-}$ has vertex set
\[
 V=[0,\bar k+m]+|h^++h^-|
\]
and edge labels $E=[1,m]$. The edge labelled $j$ joins
\[
 k+j+h^++h^-\qquad\text{and}\qquad |k-j+h^+-h^-|.
\]
It is called a \emph{type-$1$ edge} if
$j<k+\lceil h^+-h^-\rceil$, and a \emph{type-$2$ edge} otherwise.
The vertices
\[
 c_j=j+|h^++h^-|,\qquad 0\leq j\leq\bar k,
\]
are the centers.
\end{defn}
    The graphs have a very simple structure and in order to see this, we first consider the subgraph of $\Lambda_{m,k,h^{+},h^{-}}$ consisting only of the type-$2$ edges. This is a collection of $K$ disjoint paths, the vertices of which are comprised by integers that have the same remainder modulo $K$.
In other words: the $j$-th path connects vertices in the order 
$${\color{orange} j}+{\color{orange} |h^{+}+h^{-}|},~
{\color{violet} K}+{\color{orange} j}+{\color{orange} |h^{+}+h^{-}|},~
{\color{violet} 2K}+{\color{orange} j}+{\color{orange} |h^{+}+h^{-}|},~
{\color{violet} 3K}+{\color{orange} j}+{\color{orange} |h^{+}+h^{-}|},\ldots$$ 
with edge labels
$${\color{orange} j+k}+{\color{orange} \lceil h^+-h^- \rceil},~
{\color{violet} K}+{\color{orange} j+k}+ {\color{orange} \lceil h^+-h^- \rceil},~
{\color{violet} 2K}+{\color{orange} j+k}+ {\color{orange} \lceil h^+-h^- \rceil},\ldots$$
for 
$0 \le j \le K-1$, that is the edge 
$$\{{{\color{violet} K l} +\color{orange} j}+  {\color{orange} |h^{+}+h^{-}|},~{\color{violet} K(l+1)}+{\color{orange} j}+
{\color{orange} |h^{+}+h^{-}|} \}$$ 
is labelled with ${\color{violet} Kl}+{\color{orange}j+ k} + {\color{orange} \lceil h^+-h^- \rceil}$. 

The role of the type-$1$ edges is as follows. First, note that, if $m\leq k+\lceil h^{+}-h^{-} \rceil-1$, there only exists 
type-$1$ edges. Otherwise, the role of the type-$1$ edges is that they connect two of the paths comprised of 
type-$2$ edges. Concretely, using the numbering of paths from above, we have that the $j$-th path is connected to the 
$(K-j-2|h^++h^-|)$-th path, for $j=1- \lceil |h^{+}+h^{-}| \rceil,\ldots,k-1+\lfloor h^{+}-h^{-} \rfloor$, with 
connecting edge 
$$
\{j+|h^{+}+h^{-}|,~K-|h^{+}+h^{-}|-j\},
$$
the label of which is $k-j+ \lfloor h^{+}-h^{-} \rfloor$. All type-$1$ edges are adjacent to a center.

From this, it is not difficult to check that $\Lambda_{m,k,h^{+},h^{-}}$ consists of $\bar k+1$ components, each of which contains precisely one center $c_j$, and this $j$ coincides with the numbering of components from above. We draw the centers $c_0,\ldots,c_{\bar k}$ centrally justified, from top to bottom. The respective components are drawn horizontally, with the type-$1$ edge of the centers placed to the left of the respective center. Centers are always incident with a type-$1$ edge, except for $c_0$ in the cases $(h^{+},h^{-})=(0,0), \left(\frac{1}{2},-\frac{1}{2}\right)$ and $c_{\bar k}$ in the cases
$(h^{+},h^{-})=(0,0),\left(\frac{1}{2},0\right)$. 
The four graphs for $m=8$ and $k=3$ are shown in
Figure~\ref{fig:graphs for various parameters}.

\begin{figure}[h]
        \centering
        \begin{tabular}{ccc}
            \begin{tikzpicture}[- ,auto ,node distance =0.8 cm and 1.6cm ,on grid ,
semithick ,
state/.style ={text=blue , minimum width =0.25 cm}]
\node[state, red] (A) {$\mathbf{0}$};
\node[state, red] (B) [below =of A] {$\mathbf{1}$};
\node[state, red] (C) [below =of B] {$\mathbf{2}$};
\node[state, red] (D) [below =of C] {$\mathbf{3}$};
\node[state] (E) [right =of A] {$6$};
\node[state] (F) [left =of B] {$5$};
\node[state] (G) [left =of C] {$4$};
\node[state] (H) [right =of B] {$7$};
\node[state] (I) [right =of C] {$8$};
\node[state] (J) [right =of D] {$9$};
\node[state] (L) [left =of F] {$11$};
\node[state] (M) [left =of G] {$10$};

\draw (A) -- (E) node [midway, fill=white, anchor=south] {\scriptsize $3$};
\draw (B) -- (F) node [midway, fill=white, anchor=south] {\scriptsize $2$};
\draw (C) -- (G) node [midway, fill=white, anchor=south] {\scriptsize $1$};
\draw (B) -- (H) node [midway, fill=white, anchor=south] {\scriptsize $4$};
\draw (C) -- (I) node [midway, fill=white, anchor=south] {\scriptsize $5$};
\draw (D) -- (J) node [midway, fill=white, anchor=south] {\scriptsize $6$};
\draw (F) -- (L) node [midway, fill=white, anchor=south] {\scriptsize $8$};
\draw (G) -- (M) node [midway, fill=white, anchor=south] {\scriptsize $7$};
\end{tikzpicture}  & 
\hspace{0.4cm} &
\begin{tikzpicture}[- ,auto ,node distance =0.8 cm and 1.6cm ,on grid ,
semithick, state/.style ={text=blue , minimum width =0.25 cm}]
\node[state, red] (A) {$\mathbf{\frac{1}{2}}$};
\node[state] (a) [right =of A] {$7+\frac{1}{2}$};
\node[state, red] (B) [below =of A] {$\mathbf{1\!+\!\frac{1}{2}}$};
\node[state, red] (C) [below =of B] {$\mathbf{2\!+\!\frac{1}{2}}$};
\node[state, red] (D) [below =of C] {$\mathbf{3\!+\!\frac{1}{2}}$};
\node[state] (E) [left =of A] {$6\!+\!\frac{1}{2}$};
\node[state] (F) [left =of B] {$5\!+\!\frac{1}{2}$};
\node[state] (G) [left =of C] {$4\!+\!\frac{1}{2}$};
\node[state] (H) [right =of B] {$8\!+\!\frac{1}{2}$};
\node[state] (I) [right =of C] {$9\!+\!\frac{1}{2}$};
\node[state] (J) [right =of D] {$10\!+\!\frac{1}{2}$};
\node[state] (K) [left =of G] {$11\!+\!\frac{1}{2}$};
\draw (A) -- (a) node [midway, fill=white, anchor=south] {\scriptsize $4$};
\draw (A) -- (E) node [midway, fill=white, anchor=south] {\scriptsize $3$};
\draw (B) -- (F) node [midway, fill=white, anchor=south] {\scriptsize $2$};
\draw (C) -- (G) node [midway, fill=white, anchor=south] {\scriptsize $1$};
\draw (B) -- (H) node [midway, fill=white, anchor=south] {\scriptsize $5$};
\draw (C) -- (I) node [midway, fill=white, anchor=south] {\scriptsize $6$};
\draw (D) -- (J) node [midway, fill=white, anchor=south] {\scriptsize $7$};
\draw (K) -- (G) node [midway, fill=white, anchor=south] {\scriptsize $8$};
\end{tikzpicture}\\[0.7cm]
\begin{tikzpicture}[- ,auto ,node distance =0.8 cm and 1.6cm ,on grid ,
semithick, state/.style ={text=blue , minimum width =0.25 cm}]
\node[state,red] (A) {$\mathbf{\frac{1}{2}}$};
\node[state] (a) [left =of A] {$5\!+\!\frac{1}{2}$};
\node[state,red] (B) [below =of A] {$\mathbf{1\!+\!\frac{1}{2}}$};
\node[state,red] (C) [below =of B] {$\mathbf{2\!+\!\frac{1}{2}}$};
\node[state] (E) [right =of A] {$6\!+\!\frac{1}{2}$};
\node[state] (F) [right =of B] {$7\!+\!\frac{1}{2}$};
\node[state] (G) [right =of C] {$8\!+\!\frac{1}{2}$};
\node[state] (H) [left =of B] {$4\!+\!\frac{1}{2}$};
\node[state] (I) [left =of C] {$3\!+\!\frac{1}{2}$};
\node[state] (N) [left =of H] {$10\!+\!\frac{1}{2}$};
\node[state] (O) [left =of I] {$9\!+\!\frac{1}{2}$};

\draw (A) -- (a) node [midway, fill=white, anchor=south] {\scriptsize $3$};
\draw (A) -- (E) node [midway, fill=white, anchor=south] {\scriptsize $4$};
\draw (B) -- (F) node [midway, fill=white, anchor=south] {\scriptsize $5$};
\draw (C) -- (G) node [midway, fill=white, anchor=south] {\scriptsize $6$};
\draw (B) -- (H) node [midway, fill=white, anchor=south] {\scriptsize $2$};
\draw (C) -- (I) node [midway, fill=white, anchor=south] {\scriptsize $1$};
\draw (H) -- (N) node [midway, fill=white, anchor=south] {\scriptsize $8$};
\draw (I) -- (O) node [midway, fill=white, anchor=south] {\scriptsize $7$};
\end{tikzpicture} &
\hspace{0.4cm}
&
\begin{tikzpicture}[- ,auto ,node distance =0.8 cm and 1.6cm ,on grid ,
semithick ,state/.style ={text=blue , minimum width =0.25 cm}]
\node[state, red] (A) {$\mathbf{0}$};
\node[state, red] (B) [below =of A] {$\mathbf{1}$};
\node[state, red] (C) [below =of B] {$\mathbf{2}$};
\node[state, red] (D) [below =of C] {$\mathbf{3}$};
\node[state] (a) [right =of A] {$7$};
\node[state] (E) [left =of B] {$6$};
\node[state] (F) [left =of C] {$5$};
\node[state] (G) [left =of D] {$4$};
\node[state] (H) [right =of B] {$8$};
\node[state] (I) [right =of C] {$9$};
\node[state] (J) [right =of D] {$10$};
\node[state] (M) [left =of G] {$11$};

\draw (A) -- (a) node [midway, fill=white, anchor=south] {\scriptsize $4$};
\draw (B) -- (E) node [midway, fill=white, anchor=south] {\scriptsize $3$};
\draw (C) -- (F) node [midway, fill=white, anchor=south] {\scriptsize $2$};
\draw (D) -- (G) node [midway, fill=white, anchor=south] {\scriptsize $1$};
\draw (B) -- (H) node [midway, fill=white, anchor=south] {\scriptsize $5$};
\draw (C) -- (I) node [midway, fill=white, anchor=south] {\scriptsize $6$};
\draw (D) -- (J) node [midway, fill=white, anchor=south] {\scriptsize $7$};
\draw (G) -- (M) node [midway, fill=white, anchor=south] {\scriptsize $8$};
\end{tikzpicture}     
        \end{tabular}  
        \caption{The graphs $\Lambda_{8,3,h^{+},h^{-}}$ for 
        $(h^{+},h^{-})=\left(0,0\right),\left(\frac{1}{2},0\right), \left(0,-\frac{1}{2}\right),\left(\frac{1}{2},-\frac{1}{2}\right)$ with the centers in bold and red. The parameter pairs are listed from left to right, first in the top row and then in the bottom row. }
        \label{fig:graphs for various parameters}
\end{figure}

To formulate the determinant expansion, we consider orientations of the
edges in which all edges of each component are directed away from one
chosen vertex, its \emph{root}. We denote the set of these orientations by
$\mathcal A(\Lambda_{m,k,h^+,h^-})$. An orientation in this set is uniquely
determined by the roots $z_0,\ldots,z_{\bar k}$, where $z_j$ lies in the
component containing~$c_j$. This includes an isolated vertex, which is
necessarily its own root.

For $A\in\mathcal A(\Lambda_{m,k,h^+,h^-})$, let $E_A(c_j,z_j)$ be the set
of edges between $c_j$ and $z_j$, and let $E_A^2(c_j,z_j)$ be its subset
of type-$2$ edges. We write
\[
 E_A=\bigcup_{j=0}^{\bar k}E_A(c_j,z_j),\qquad
 E_A^2=\bigcup_{j=0}^{\bar k}E_A^2(c_j,z_j).
\]
For any orientation $O$, let $t_O(j)$ and $s_O(j)$ be the target and source
of edge~$j$. The \emph{natural orientation} $\mathcal N$ is the one with
roots $z_j=c_j$. Its edges are directed towards their larger endpoints.
In the following determinants, the block $M$ is placed first, and the
remaining columns are ordered by increasing edge label or vertex label,
as indicated by the index below the determinant.

\begin{lem}[Expansion over orientations]\label{lem: Noodle-Doodle expansion first form}
Let $M$ be an $n\times(n-m)$ matrix, let $f_1,\ldots,f_n$ be functions
on $V$, and let $p_s,p_t$ be functions on $E$. Then
\begin{multline}\label{eq: to show pure graph}
 \det_{\substack{1\leq i\leq n\\ j\in E}}
 \bigl(\matrixCols{M}{p_t(j)f_i(t_{\mathcal N}(j))
                         +p_s(j)f_i(s_{\mathcal N}(j))}\bigr)
 \\
 =\sum_{A\in\mathcal A(\Lambda_{m,k,h^+,h^-})}
   \prod_{j\in E\setminus E_A}p_t(j)
   \prod_{j\in E_A}p_s(j)
   \det_{\substack{1\leq i\leq n\\ j\in E}}
     \bigl(\matrixCols{M}{f_i(t_A(j))}\bigr).
\end{multline}
\end{lem}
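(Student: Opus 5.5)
By multilinearity in the columns indexed by $E$, the left-hand side of \eqref{eq: to show pure graph} equals
\[
 \sum_{O}\prod_{j\in E:\,O(j)=t}p_t(j)\prod_{j\in E:\,O(j)=s}p_s(j)\,
 \det_{\substack{1\leq i\leq n\\ j\in E}}\bigl(\matrixCols{M}{f_i(t_O(j))}\bigr).
\]
Here $O$ runs over all $2^m$ orientations of the edges of $\Lambda_{m,k,h^+,h^-}$. For each edge $j$ we choose either the natural orientation, contributing the factor $p_t(j)$ and column $f_i(t_{\mathcal N}(j))$, or the reversed orientation, contributing $p_s(j)$ and column $f_i(s_{\mathcal N}(j))$. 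In both cases the column is $f_i$ evaluated at the head of edge $j$ under $O$. The plan is to show that every $O\notin\mathcal A(\Lambda_{m,k,h^+,h^-})$ contributes zero, and that the members of $\mathcal A$ are precisely the remaining orientations, with the stated weights.

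Vanishing: if two distinct edges $j\neq j'$ have the same head, then the columns $f_i(t_O(j))$ and $f_i(t_O(j'))$ coincide, so the determinant is zero. Hence only orientations in which every vertex has in-degree at most one survive.

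Structure: every component of $\Lambda_{m,k,h^+,h^-}$ is a tree; by the preceding description it is a path containing exactly one center. A component with $v$ vertices has $v-1$ edges. If every vertex has in-degree at most one, then exactly one vertex of the component has in-degree zero. Call it the root $z$. Following incoming edges backward from any vertex never revisits a vertex, since the component is a tree, and so must terminate at $z$. Therefore all edges point away from $z$. Conversely, every choice of roots gives such an orientation. So the surviving orientations are exactly $\mathcal A(\Lambda_{m,k,h^+,h^-})$, parametrised by $(z_0,\dots,z_{\bar k})$, and isolated vertices are trivially their own roots.

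Weights: it remains to check that the set of edges oriented against $\mathcal N$ in $A$ is exactly $E_A=\bigcup_j E_A(c_j,z_j)$. In the natural orientation the root is $c_j$, with all edges pointing away from $c_j$, i.e. toward their larger endpoints; I would verify this from the labelling in Definition~\ref{def: labelled graph}, since along a path vertex labels increase moving away from the center on either side. Moving the root from $c_j$ to $z_j$ reverses precisely the edges on the unique path between $c_j$ and $z_j$ and leaves all others unchanged. Consequently edges in $E_A$ receive $p_s$ and all other edges receive $p_t$, which is the claimed product.

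The main obstacle is this last verification. One must confirm, in all four parameter cases $(h^+,h^-)$, that $\mathcal N$ really is the orientation away from the centers: each path is monotone increasing outward from $c_j$ on both sides, including across the type-$1$ edge joining two type-$2$ chains. Note also that the column order is unaffected by reorientation, because columns stay indexed by edge label, so no sign changes arise.
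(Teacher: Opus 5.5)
Your proposal is correct and follows the paper's proof step for step: you expand multilinearly over all $2^m$ orientations, discard those with a vertex of indegree two because they give repeated columns, use the unique indegree-zero root in each path component, and observe that exactly the edges between $c_j$ and $z_j$ are reversed. The verification you flag as the main obstacle is not needed for this lemma, since $\mathcal N$ is defined as the orientation rooted at the centers; the fact that it directs edges towards their larger endpoints is only used later, to identify $t_{\mathcal N}(j)=k+j+h^++h^-$ when the lemma is applied in Lemma~\ref{lem: noodle-doodle expansion}.
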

\begin{proof}
Using multilinearity in columns $n-m+1,\ldots,n$, we expand the
left-hand side by choosing one endpoint of each edge. Orienting each edge
towards the chosen endpoint identifies the resulting terms with
orientations of the graph.

By the antisymmetry of the determinant, an orientation with a vertex of
indegree two gives a zero summand, since two columns are equal. In every
remaining orientation, a component with $r$ vertices has $r-1$ vertices
of indegree one and precisely one of indegree zero. Its edges are directed
away from the latter vertex. Hence it suffices to sum over
$\mathcal A(\Lambda_{m,k,h^+,h^-})$.

Changing the root from $c_j$ to $z_j$ reverses precisely the edges between
these two vertices. Thus the edges in $E_A$ contribute $p_s$, and all other
edges contribute $p_t$, proving~\eqref{eq: to show pure graph}.
\end{proof}

We next parametrise the roots. Each root can be written uniquely as
\begin{equation}\label{eq: parametrisation of roots}
 z_j=z(y_j)=\alpha_j(Ky_j+c_j),\qquad
 \alpha_j=\begin{cases}1,&y_j\geq0,\\-1,&y_j<0,\end{cases}
\end{equation}
\nopagebreak[4]
provided that
\begin{equation}\label{eq: admissible roots}
 \begin{gathered}
 (y_0,\ldots,y_{\bar k})\in\Z^{\bar k+1},\qquad
 z(y_j)\in V\quad(0\leq j\leq\bar k),\\
 y_j\geq0\quad\text{if }c_j\equiv-c_j\pmod K.
 \end{gathered}
\end{equation}
Indeed, on the two sides of a center the successive vertices are
$c_j+K,c_j+2K,\ldots$ and $K-c_j,2K-c_j,\ldots$, respectively, truncated
to~$V$. The second side is absent if $c_j\equiv-c_j\pmod K$; in that case
allowing negative $y_j$ would count vertices twice. These observations also
show that~\eqref{eq: admissible roots} gives precisely one choice of a root
in each component. In particular, the roots are distinct and the set of
admissible tuples is finite.

\begin{lem}\label{lem: EA and tA(j) are given by}
For an orientation $A\in \mathcal A(\Lambda_{m,k,h^+,h^-})$ determined by the roots $z_0,\dots,z_{\bar k}$ parametrised as in~\eqref{eq: parametrisation of roots}
and~\eqref{eq: admissible roots}, we have
\begin{equation}\label{eq: EA is given by}
 E_A=\bigcup_{i=0}^{\bar k}\Big\{K\ell+k+\alpha_i i+\lceil h^+-h^-\rceil
       -[y_i<0]\lceil|h^++h^-|\rceil~:~     0\leq\ell<|y_i|\Big\},
\end{equation}
where $\ell$ is an integer. Moreover,
\begin{equation}\label{eq: tA(j) is equal to}
 t_A(j)=\begin{cases}
 |k-j+h^+-h^-|,&j\in E_A,\\
 k+j+h^++h^-,&j\notin E_A,
 \end{cases}
 \qquad
 |E_A^2|=\sum_{i=0}^{\bar k}\bigl(|y_i|-[y_i<0]\bigr).
\end{equation}
\end{lem}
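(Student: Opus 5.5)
We work component by component, since both $E_A$ and $E_A^2$ are unions over components of the edges on the path from the center $c_i$ to the root $z_i$. The structural description of $\Lambda_{m,k,h^+,h^-}$ given before the statement is the main tool: each component consists of a center $c_i$, a ray $c_i, c_i+K, c_i+2K,\dots$ of type-$2$ edges on one side, and (if present) a type-$1$ edge from $c_i$ to $K-c_i$ followed by a ray $K-c_i, 2K-c_i,\dots$ of type-$2$ edges on the other side.

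First we treat $y_i\ge0$, so that $z_i=Ky_i+c_i$. The path from $c_i$ to $z_i$ consists of the $y_i$ type-$2$ edges $\{K\ell+c_i,K(\ell+1)+c_i\}$ for $0\le\ell<y_i$. By the labelling recorded above, with $c_i=i+|h^++h^-|$, the label of such an edge is $K\ell+i+k+\lceil h^+-h^-\rceil$, which is the claimed formula with $\alpha_i=1$. All these edges are type $2$, which contributes $|y_i|$ to $|E_A^2|$.

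Next we treat $y_i<0$, so that $z_i=K|y_i|-c_i$. The path starts with the type-$1$ edge $\{c_i,K-c_i\}$, whose label is $k-i+\lfloor h^+-h^-\rfloor$. It continues with $|y_i|-1$ type-$2$ edges $\{K\ell-c_i,K(\ell+1)-c_i\}$ for $1\le\ell<|y_i|$. The vertex $K\ell-c_i$ lies in residue class $K-c_i$ modulo $K$, which corresponds to path index $j'=K-i-2|h^++h^-|$. Its labels are therefore $K(\ell-1)+j'+k+\lceil h^+-h^-\rceil$, that is, $K\ell+k-i+\lceil h^+-h^-\rceil-2|h^++h^-|$.

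We then check that this matches $K\ell+k-i+\lceil h^+-h^-\rceil-\lceil|h^++h^-|\rceil$ for $1\le\ell<|y_i|$, and that the case $\ell=0$ reproduces the type-$1$ label. This requires the two identities
\[
2|h^++h^-|=\lceil|h^++h^-|\rceil
\qquad\text{and}\qquad
\lfloor h^+-h^-\rfloor=\lceil h^+-h^-\rceil-\lceil|h^++h^-|\rceil.
\]
Both are verified directly over the four pairs $(h^+,h^-)$. This bookkeeping of floors, ceilings and half-integer shifts is the main obstacle, so I would check it case by case against Figure~\ref{fig:graphs for various parameters}. The path in this case contains exactly $|y_i|-1$ type-$2$ edges, which gives the contribution $|y_i|-[y_i<0]$ to $|E_A^2|$.

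Finally, the formula for $t_A(j)$ follows from the observation in the proof of Lemma~\ref{lem: Noodle-Doodle expansion first form}. Edges outside $E_A$ keep their natural orientation, which points toward the larger endpoint $k+j+h^++h^-$. Edges in $E_A$ are reversed, so their target is the other endpoint $|k-j+h^+-h^-|$. To confirm that $k+j+h^++h^-$ is the larger endpoint, note that $|k-j+h^+-h^-|\le k+j+h^++h^-$, using $j\ge1$, $h^+\ge0$ and $h^-\le0$.
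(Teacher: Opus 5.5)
Your proposal is correct and follows essentially the same route as the paper: a split on the sign of $y_i$, reading off the labels of the right-hand ray of type-$2$ edges and of the type-$1$ edge followed by the left-hand ray (your path index $j'=K-i-2|h^++h^-|$ gives exactly the paper's shifted index $i^*=i+\lceil|h^++h^-|\rceil$), a case check of the floor/ceiling identities over the four pairs $(h^+,h^-)$, and the observation that exactly the edges of $E_A$ are reversed relative to $\mathcal N$. One minor slip: the inequality $|k-j+h^+-h^-|\le k+j+h^++h^-$ actually rests on $j+h^-\ge0$ (i.e.\ $h^-\ge-\tfrac12$) and $k+h^+\ge0$, not on $h^-\le0$.
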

\begin{proof}
    We claim that \begin{equation*}
        E_A(c_i,z_i)=\left\{2(k+h^+)\ell+k+\alpha_i i+\lceil h^+-h^-\rceil-[y_i< 0]\lceil |h^++h^-|\rceil~:~
    \ell=0,\dots,|y_i|-1\right\},
    \end{equation*}
    from which \eqref{eq: EA is given by} immediately follows. 
    
    Note that if $z_i$ is located weakly to the right side of $c_i$ then the $|y_i|$ edges between $c_i$ and $z_i$ are labelled with $$2(k+h^+)\ell+k+i+\lceil h^+-h^-\rceil,$$ for $\ell=0, \dots, |y_i|-1$ as claimed. 
    
Similarly, if $z_i$ is located strictly to the left side of $c_i$ then the $|y_i|$ edges between $c_i$ and $z_i$ consist of the one type-$1$ edge labelled by $$k-i+\lfloor h^+-h^-\rfloor$$ and the $|y_i|-1$ type-$2$ edges labelled with $$2(k+h^+)\ell+k-i^*+\lceil h^+-h^-\rceil$$ for $\ell=1,\dots,|y_i|-1$. Here we denote by $i^*=i$ if $(h^+,h^-)=(0,0),\left(\frac{1}{2},-\frac{1}{2}\right)$ and $i^*=i+1$ if $(h^+,h^-)=\left(\frac{1}{2},0\right),\left(0,-\frac{1}{2}\right)$.
Note that $\lceil |h^++h^-|\rceil=1$ if and only if $(h^+,h^-)=(\frac{1}{2},0),(0,-\frac{1}{2})$ and this proves the first claim. Note that this also immediately proves the formula for $|E_A^2|$ as it shows that $E_A$ contains $|y_i|$ type-$2$ edges if $y_i\geq 0$ and precisely one type-$1$ edge and $|y_i|-1$ type $2$ edges if $y_i<0$. 

An edge in $E_A$ is directed towards the center and therefore towards its
smaller endpoint. Every other edge is directed towards its larger
endpoint. This proves the formula for~$t_A(j)$.

\end{proof}

In our applications, the determinants on the right-hand side of the
expansion are to be recognised as character numerators. We must therefore
reorder their columns so that the vertex labels are increasing. The
following lemma gives the required sign.

\begin{lem}[Column order]\label{lem: column order}
For an orientation $A\in\mathcal A(\Lambda_{m,k,h^+,h^-})$ with roots $z_0,\ldots,z_{\bar k}$, set
\[
 b_j=\bigl|\{i:i>j\text{ and }z_i>z_j\}\bigr|.
\]
Then
\begin{equation}\label{eq: sign of column order}
 (-1)^{\inv(t_A(1),\ldots,t_A(m))}=(-1)^{\binom{\bar k+1}{2}
       +\sum_{j=0}^{\bar k}\bigl((K-1)y_j
            +2|h^++h^-|[y_j<0]+b_j\bigr)},
\end{equation}
where $\inv(a_1,\dots,a_n)\coloneqq|\{(i,j)\in[1,n]^2\mid i>j\textrm{ and }a_i<a_j\}|$ is the number of \emph{inversions} of the array $(a_1,\dots,a_n)$.
\end{lem}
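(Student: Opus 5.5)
We compare the sequence $(t_A(1),\ldots,t_A(m))$, read in increasing edge label, with the natural sequence $(t_{\mathcal N}(1),\ldots,t_{\mathcal N}(m))$. Under $\mathcal N$ every edge points to its larger endpoint $k+j+h^++h^-$, and these values increase with $j$, so $\inv(t_{\mathcal N})=0$. Since the target multiset of $A$ is $V\setminus\{z_0,\ldots,z_{\bar k}\}$ and that of $\mathcal N$ is $V\setminus\{c_0,\ldots,c_{\bar k}\}$, the parity of $\inv(t_A)$ equals the parity of the number of inversions of an explicit rearrangement. We therefore count this parity directly, changing one root at a time.

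\textbf{Step 1: reduction to moving one root.} Fix $A$ and move the roots from $c_j$ to $z_j$ one component at a time. By Lemma~\ref{lem: EA and tA(j) are given by}, reversing the path from $c_j$ to $z_j$ changes the target of each edge on that path from its larger to its smaller endpoint. Along the path, each target shifts one step towards $c_j$. In position space, the labels $c_j, c_j\pm K, c_j\pm 2K,\ldots$ are cyclically shifted: the vertex $z_j$ leaves the target set and $c_j$ enters it. Equivalently, we consider the permutation of $V$ that sends the sorted vertex list with the $c$'s removed to the edge-ordered target list. Its sign factors into one contribution per component, plus a correction from how the removed vertices $z_0,\ldots,z_{\bar k}$ interleave.

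\textbf{Step 2: one component.} For $y_j\ge0$, the edges on the path have labels $K\ell+k+j+\lceil h^+-h^-\rceil$ for $0\le\ell<y_j$. Their targets become $c_j+K\ell$ instead of $c_j+K(\ell+1)$. Each target therefore moves left past the $K-1$ vertices lying strictly between consecutive path vertices. Since those vertices are targets of the other edges with labels interleaved in the same order, this contributes $(K-1)y_j$ inversions modulo $2$. For $y_j<0$ the same count applies to the $|y_j|-1$ type-$2$ edges. The type-$1$ edge needs a separate treatment: its target jumps from $K-c_j$ (or the analogous vertex) down to $c_j$, crossing a block of length $K-2c_j$ or so. This produces the term $(K-1)$ for that edge, together with the correction $2|h^++h^-|[y_j<0]$ coming from the half-integer shift $|h^++h^-|$ in the vertex labels.

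\textbf{Step 3: interaction between components.} After these per-component moves, the target list is sorted except for how the holes $z_j$ are placed relative to one another. Comparing the sorted complement of $\{z_i\}$ with that of $\{c_i\}$ gives a shuffle sign. Because the centers $c_0<\cdots<c_{\bar k}$ are in increasing order, this sign is $(-1)^{\binom{\bar k+1}{2}+\sum_j b_j}$. Here $\sum_j b_j$ counts the pairs $i>j$ with $z_i>z_j$, so $\binom{\bar k+1}{2}-\sum_j b_j$ counts the pairs out of order.

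The main obstacle is Step 2 for $y_j<0$ across the four choices of $(h^+,h^-)$. There the path switches from the type-$1$ edge to a different residue class mod $K$, namely the $(K-j-2|h^++h^-|)$-th path, and I would verify the resulting parity case by case against the graph descriptions above. I would also check the formula on small examples, such as the graphs in Figure~\ref{fig:graphs for various parameters}, to pin down the sign conventions.
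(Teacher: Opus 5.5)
Your proposal has a genuine gap at the point where different components interact. In Step~2 you count $K-1$ parity changes per edge of the path from $c_j$ to $z_j$, on the grounds that the vertices strictly between consecutive path vertices are targets of other edges. That is false in general.

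When the root of component $j$ moves from $u$ to an adjacent $v$, the entry $v$ of the target list is replaced by $u$. Modulo $2$, $\inv$ then changes by the number of \emph{targets} strictly between $u$ and $v$. This is $|v-u|-1$ minus the number of other current roots in that interval, and such roots do occur there. The unmoved centers $c_{j+1},\ldots,c_{\bar k}$ all lie in $(c_j,c_j+K)$, and previously moved roots $z_i$ can lie in later intervals.

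For instance, take $\Lambda_{m,3,0,0}$ with $y_0=1$ and $y_1=y_2=y_3=0$. The target of edge $3$ drops from $6$ to $0$, but only $4,5$ of the five intermediate vertices are targets. So $\inv(t_A)=2$, whereas your Step~2 predicts $K-1=5$. Your remark about interleaved labels does not help: the parity change caused by altering one entry depends only on which values lie in between, not on positions.

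Step~3 is supposed to absorb these corrections, but it only asserts the answer. After all moves the list is $t_A$ itself, which is not sorted up to the placement of holes. The ``shuffle sign'' comparing the complements of $\{z_i\}$ and $\{c_i\}$ is never defined or computed. Showing that the root corrections total $\binom{\bar k+1}{2}+\sum_j b_j$ modulo $2$ is precisely the nontrivial content of the lemma. The type-$1$ count (``$K-2c_j$ or so'') is also left open, although it works out: $K-2c_j-1\equiv K-1+2|h^++h^-|\pmod 2$.

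The paper avoids this bookkeeping with an augmented list. It considers $(z_0,\ldots,z_{\bar k}\mid t_A(1),\ldots,t_A(m))$, in which every vertex occurs exactly once. For $\mathcal N$ this is the sorted list of $V$. Each root move swaps two entries, so the inversion number of the augmented list has parity $\sum_j|y_j|$. The paper then splits these inversions into three kinds:
\begin{itemize}
\item root--root pairs, numbering $\binom{\bar k+1}{2}-\sum_j b_j$;
\item root--target pairs, numbering $\sum_j(z_j-|h^++h^-|)-\binom{\bar k+1}{2}$, because roots precede targets and $V$ consists of consecutive numbers starting at $|h^++h^-|$;
\item target--target pairs, which give $\inv(t_A)$.
\end{itemize}
Substituting $z_j=\alpha_j(Ky_j+c_j)$ finishes the proof, with no case analysis on $(h^+,h^-)$ or on type-$1$ edges.

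If you want to keep your step-by-step route, you must prove that the root corrections sum to $\inv(z_0,\ldots,z_{\bar k})$ modulo $2$. You can do this by processing components in the order $j=0,1,\ldots,\bar k$. Use that each path increases in value from $c_j$ to $z_j$, and that $z_i>c_{\bar k}$ whenever $y_i\neq0$.
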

\begin{proof}
We first count inversions in a list containing all
vertices, and then subtract the inversions involving roots.

Consider $(z_0,\ldots,z_{\bar k}\mid t_A(1),\ldots,t_A(m))$.
The vertical bar merely separates the roots from the targets. The roots
remain in component order, and the targets remain in edge-label order.
Every vertex occurs exactly once: roots have indegree zero, and every
other vertex is the target of precisely one edge.

For the natural orientation, this list is $ (c_0,\ldots,c_{\bar k}\mid
 t_{\mathcal N}(1),\ldots,t_{\mathcal N}(m))$,
which is the increasing list of all vertices. Its inversion number is
therefore zero.

Now move a root along the path from its center to its prescribed position.
Suppose one step moves the current root from $u$ to an adjacent vertex $v$,
across edge $\ell$. Before this move, the edge points from $u$ to $v$, so
the root entry is $u$ and the entry corresponding to edge $\ell$ is $v$.
Afterwards, the root entry is $v$, while the reversed edge has target $u$.
Thus this move exchanges exactly two entries of the list.
All other entries remain unchanged.

Exchanging two entries reverses the parity of the inversion number, even
when their positions are not adjacent. Consequently, after moving all roots,
\begin{equation}\label{eq: sign of augmented list}
 \inv(z_0,\ldots,z_{\bar k},t_A(1),\ldots,t_A(m))
 \equiv |E_A|
 =\sum_{j=0}^{\bar k}|y_j|\pmod2.
\end{equation}
This is a parity statement: one move need not change the inversion number
by exactly one.

We now separate the inversions of this full list into three kinds:
\begin{itemize}
 \item inversions between two roots;
 \item inversions between a root and a target;
 \item inversions between two targets.
\end{itemize}
The last kind is precisely what the lemma seeks to count.

For the first kind, recall that $b_j=\bigl|\{i:i>j,\ z_i>z_j\}\bigr|$.
Thus $b_j$ counts the non-inversions whose first entry is $z_j$
in the root list. Since the roots are distinct, each of the
$\binom{\bar k+1}{2}$ pairs is either an inversion or a non-inversion.
Hence
\[
 \inv(z_0,\ldots,z_{\bar k})
 =\binom{\bar k+1}{2}-\sum_{j=0}^{\bar k}b_j.
\]

For the second kind, every root appears before every target. Therefore a
root $z_j$ and a target form an inversion exactly when the target is smaller
than $z_j$.
Since the vertices are consecutive with smallest vertex $|h^++h^-|$,
there are exactly $z_j-|h^++h^-|$
vertices smaller than $z_j$. Some are roots, so the number of targets
smaller than $z_j$ is $ z_j-|h^++h^-|-\bigl|\{i:z_i<z_j\}\bigr|$. Here, the last set includes all smaller roots, regardless of their
indices as these are never targets of an edge.
When we sum this expression over the components $j$, each unordered pair of roots contributes exactly once to the subtracted term: its smaller member is counted for its larger member.
Thus $ \sum_{j=0}^{\bar k}\bigl|\{i:z_i<z_j\}\bigr|
 =\binom{\bar k+1}{2}.$
The number of root--target inversions is consequently
\[
 \sum_{j=0}^{\bar k}(z_j-|h^++h^-|)-\binom{\bar k+1}{2}.
\]

Combining the three contributions gives the exact equality
\begin{equation*}
    \inv(z_0,\ldots,z_{\bar k},t_A(1),\ldots,t_A(m))=\inv(t_A(1),\ldots,t_A(m))
 +\sum_{j=0}^{\bar k}\bigl(z_j-|h^++h^-|-b_j\bigr).
\end{equation*}
The two binomial coefficients have cancelled. Reducing modulo $2$, and
using the root-move argument, yields
\[
 \inv(t_A(1),\ldots,t_A(m))
 \equiv\sum_{j=0}^{\bar k}
 \bigl(|y_j|+z_j-|h^++h^-|+b_j\bigr)\pmod2.
\]
The remaining step in the lemma is just substitution. From the definition
of $z_j$,
\[
 |y_j|+z_j-|h^++h^-|
 =\begin{cases}
 (K+1)y_j+j,&y_j\geq0,\\
 -(K+1)y_j-j-2|h^++h^-|,&y_j<0.
 \end{cases}
\]
Modulo $2$, this becomes
\[
 (K-1)y_j+j+2|h^++h^-|[y_j<0].
\]
Substituting into the above and using
$\sum_{j=0}^{\bar k}j=\binom{\bar k+1}{2}$ gives the asserted sign.
\end{proof}
We are ready to state the main result of this section. 
\begin{lem}[Determinant expansion]\label{lem: noodle-doodle expansion}
\label{thm: Noodle-Doodle expansion}
Let $M$ be an $n\times(n-m)$ matrix and let $\xi_1,\ldots,\xi_n$ be
functions on $\Z+|h^++h^-|$ satisfying
$\xi_i(a)=\epsilon\xi_i(-a)$ for a fixed $\epsilon\in\{\pm1\}$.
For arbitrary functions $p_+,p_-$ on $E$, we have
\begin{multline}\label{eq: general determinant expansion}
 \det_{\substack{1\leq i\leq n\\1\leq j\leq m}}
 \bigl(\matrixCols{M}{p_+(j)\xi_i(k+j+h^++h^-)
                         +p_-(j)\xi_i(k-j+h^+-h^-)}\bigr)\\
 =\sum_{(y_0,\ldots,y_{\bar k})}
 \epsilon^{\sum_{j=0}^{\bar k}(|y_j|-[y_j<0])}
 (-1)^{\binom{\bar k+1}{2}+
     \sum_{j=0}^{\bar k}((K-1)y_j+2|h^++h^-|[y_j<0]+b_j)}\\
 \times\prod_{j\in E\setminus E_A}p_+(j)\prod_{j\in E_A}p_-(j)
 \det_{\substack{1\leq i\leq n\\v\in V\setminus\{z_0,\ldots,z_{\bar k}\}}}
       \bigl(\matrixCols{M}{\xi_i(v)}\bigr).
\end{multline}
Here the sum ranges precisely over~\eqref{eq: admissible roots}, $E_A$ is
given by~\eqref{eq: EA is given by}, and $b_j$ is as in
Lemma~\ref{lem: column order}. The columns indexed by $v$ are in increasing
order.
\end{lem}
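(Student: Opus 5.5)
The plan is to derive the statement from Lemma~\ref{lem: Noodle-Doodle expansion first form}, then rewrite each term using Lemma~\ref{lem: EA and tA(j) are given by} and Lemma~\ref{lem: column order}.

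First we match the left-hand side of \eqref{eq: general determinant expansion} to the left-hand side of \eqref{eq: to show pure graph}. For an edge $j$, the two endpoints are $k+j+h^++h^-$ and $|k-j+h^+-h^-|$, and the natural orientation points towards the larger one. The first endpoint is always the larger, since $k+j+h^++h^-\ge|k-j+h^+-h^-|$. We therefore set
\[
 f_i(v)=\xi_i(v),\qquad p_t(j)=p_+(j).
\]
The second argument $k-j+h^+-h^-$ may be negative. This happens exactly for type-$2$ edges, where the endpoint is its absolute value. The parity $\xi_i(a)=\epsilon\xi_i(-a)$ gives
\[
 \xi_i(k-j+h^+-h^-)=\epsilon^{[j\text{ type }2]}\,\xi_i(|k-j+h^+-h^-|).
\]
So we set $p_s(j)=\epsilon^{[j\text{ type }2]}p_-(j)$. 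We should check that for type-$1$ edges $k-j+h^+-h^-\ge0$, so that no sign arises there. This follows from the threshold $j<k+\lceil h^+-h^-\rceil$ in Definition~\ref{def: labelled graph}, with a small case check on $h^\pm$.

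Lemma~\ref{lem: Noodle-Doodle expansion first form} then expresses the left-hand side as a sum over $A\in\mathcal A(\Lambda_{m,k,h^+,h^-})$. The factor in front of each determinant is
\[
 \epsilon^{|E_A^2|}\prod_{j\notin E_A}p_+(j)\prod_{j\in E_A}p_-(j).
\]
By \eqref{eq: tA(j) is equal to}, $|E_A^2|=\sum_j(|y_j|-[y_j<0])$, which is the claimed power of $\epsilon$. The parametrisation \eqref{eq: parametrisation of roots}--\eqref{eq: admissible roots} gives a bijection between orientations and admissible tuples $(y_0,\dots,y_{\bar k})$. Hence the sum over $A$ becomes the sum over admissible tuples, with $E_A$ given by \eqref{eq: EA is given by}.

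Next we reorder the columns of each determinant. The columns after $M$ are $\xi_i(t_A(1)),\dots,\xi_i(t_A(m))$ in edge order. Their set of arguments is exactly $V\setminus\{z_0,\dots,z_{\bar k}\}$, because every non-root vertex is the target of exactly one edge. Sorting these columns into increasing order of vertex multiplies the determinant by $(-1)^{\inv(t_A(1),\dots,t_A(m))}$. Lemma~\ref{lem: column order} evaluates this sign as the one displayed in \eqref{eq: general determinant expansion}, which completes the identity.

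The main obstacle is the sign bookkeeping in the first step. We must verify that the convention "type-$2$ edge" coincides exactly with $k-j+h^+-h^-<0$ in all four $(h^+,h^-)$ cases. If it does not, the $\epsilon$-count would be off by the boundary edges. The remaining steps are direct substitutions into the earlier lemmas.
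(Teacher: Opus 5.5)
Your proposal is correct and follows the paper's proof essentially verbatim. It makes the same substitution $f_i=\xi_i$, $p_t=p_+$, $p_s(j)=\epsilon^{[j\text{ type }2]}p_-(j)$ into Lemma~\ref{lem: Noodle-Doodle expansion first form}, reads off $|E_A^2|$ from Lemma~\ref{lem: EA and tA(j) are given by}, and takes the sign from Lemma~\ref{lem: column order}.

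The obstacle you flag resolves differently than you expect. Type-$2$ does not coincide with $k-j+h^+-h^-<0$: for $(h^+,h^-)=(0,0)$ and $(\frac12,-\frac12)$, the first type-$2$ edge ($j=k$, resp.\ $j=k+1$) has reflected argument exactly $0$. The $\epsilon$-count is still unaffected, because the hypothesis at $a=0$ gives $\xi_i(0)=\epsilon\xi_i(0)$. So your displayed identity holds for every type-$2$ edge, which is precisely the remark made in the paper.
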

\begin{proof}
Apply Lemma~\ref{lem: Noodle-Doodle expansion first form} with
\[
 f_i=\xi_i,\qquad p_t(j)=p_+(j),\qquad
 p_s(j)=\epsilon^{[j\text{ is type }2]}p_-(j).
\]
This is valid also when the reflected argument is zero, since
$\xi_i(0)=\epsilon\xi_i(0)$. The factor contributed by the reflected
arguments is $\epsilon^{|E_A^2|}$, with $|E_A^2|$ given
by~\eqref{eq: tA(j) is equal to}. Every non-root vertex has indegree one,
so the set of targets is $V\setminus\{z_0,\ldots,z_{\bar k}\}$.
Parametrise the roots by~\eqref{eq: parametrisation of roots} and apply
Lemma~\ref{lem: column order} to put the columns in increasing order.
\end{proof}
Using Lemma \ref{lem: EA and tA(j) are given by} the weight can equivalently be written in the following form used in the applications below:
\begin{multline}\label{eq: product of edge weights}
 \prod_{j\in E\setminus E_A}p_+(j)\prod_{j\in E_A}p_-(j)\\=\prod_{j=1}^m p_+(j)
 \prod_{i=0}^{\bar k}\prod_{\ell=0}^{|y_i|-1}
 \frac{p_-(K\ell+k+\alpha_i i+\lceil h^+-h^-\rceil
                      -[y_i<0]\lceil|h^++h^-|\rceil)}
      {p_+(K\ell+k+\alpha_i i+\lceil h^+-h^-\rceil
                      -[y_i<0]\lceil|h^++h^-|\rceil)}.
\end{multline}

With the general determinant expansion established, we now state the six
specialisations needed in the subsequent sections. Table~\ref{tab: six expansions}
gives the graph and the fixed columns in each case; ``empty'' means that
$M$ has no columns.
\begin{table}[h]
\centering
\renewcommand{\arraystretch}{1.3}
\begin{tabular}{c c c c c}
Expansion&Graph&$\xi_i$&$\epsilon$&$M$\\\hline
$1.6a$&$\Lambda_{n-1,k,1/2,0}$&$\eta_i$&$-1$&$(\eta_i(k+1/2))_{i=1}^n$\\
$1.6b$&$\Lambda_{n,k,0,-1/2}$&$\eta_i$&$-1$&empty\\
$1.6c$&$\Lambda_{n-1,k,0,0}$&$\zeta_i$&$1$&$(\zeta_i(k))_{i=1}^n$\\
$1.8a$&$\Lambda_{n-1,k+1,0,0}$&$\eta_i$&$-1$&$(\eta_i(k+1))_{i=1}^n$\\
$1.8b$&$\Lambda_{n,k,0,0}$&$\eta_i$&$-1$&empty\\
$1.10a$&$\Lambda_{n,k,1/2,-1/2}$&$\eta_i$&$-1$&empty\\
\end{tabular}\\\,
\caption{The six specialisations of Lemma~\ref{lem: noodle-doodle expansion}.}
\label{tab: six expansions}
\end{table}

In the following six lemmas, $n,k\geq1$, and $p_+,p_-$ are functions on
$[1,n]$. Only values on the edge labels of the specified
graph are used. In every sum the roots must lie in the indicated vertex
interval, in addition to the endpoint restrictions displayed under the
summation sign. These restrictions make each sum finite. This condition can also be formulated as the vanishing of the determinant of a rectangular matrix.

\begin{lem}[Expansion 1.6a]\label{lem: noodle-doodle expansion 2.1a}
    Let $\eta_1,\dots,\eta_n$ be functions on $\frac{1}{2}\Z$ with $\eta_i(a) = -\eta_i(-a)$. 
    Then
    \begin{multline*}
        \det_{\substack{1\le i \le n\\ 1\leq j\leq n-1}}\big(\matrixCols{\eta_i(k+1/2)}{p_+(j)\eta_i(k+1/2+j)+p_-(j)\eta_i(k+1/2-j)}\big)\\=\sum_{\substack{(y_0,\dots,y_{ k}) \in\Z^{k + 1} \\ y_k \geq 0}}
        (-1)^{\binom{k+1}{2}
        +\sum_{j=0}^{k}
        (y_j+b_j)} \prod_{j=1}^{n-1}p_+(j) 
        \prod_{i=0}^{k}
        \prod_{\ell=0}^{\alpha_iy_i-1}
        \frac{p_-(K\ell+k+\alpha_i i+[y_i\geq 0])}{p_+(K\ell+k+\alpha_i i+[y_i\geq 0])}\\\times
         \det_{\substack{1\le i\le n \\0 \le j\le k+n-1 \\ j \not=z(y_r)-1/2, r=0,\dots,k}}(\matrixCols{\eta_i(k+1/2)}{\eta_i(j+1/2)}),
    \end{multline*}
    where $K=2k+1$ and $z(y_j)=\alpha_j(Ky_j+j+1/2)$ 
    with $\alpha_j=1$ if $y_j \geq 0$ and $-1$ otherwise, and $b_j=\vert\{i| z(y_i)> z(y_j)\textrm{ and }i>j\}\vert$.
    The sum is restricted by $\tfrac12\leq z(y_r)\leq k+n-\tfrac12$ for $r=0,\ldots,k$.
\end{lem}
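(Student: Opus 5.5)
This is the specialisation of Lemma~\ref{lem: noodle-doodle expansion} to the graph $\Lambda_{n-1,k,1/2,0}$ with $\xi_i=\eta_i$, $\epsilon=-1$ and $M=(\eta_i(k+1/2))_{i=1}^n$, as in the first row of Table~\ref{tab: six expansions}. The plan is to compute every quantity appearing in \eqref{eq: general determinant expansion} for these parameters and then simplify.

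\textbf{Step 1: the graph data.} With $h^+=\tfrac12$, $h^-=0$ and $m=n-1$ we get
\[
\bar k=k+\lfloor\tfrac12\rfloor=k,\qquad K=2k+1,\qquad |h^++h^-|=\tfrac12,\qquad \lceil h^+-h^-\rceil=1 .
\]
Consequently:
\begin{itemize}
\item the vertex set is $V=[0,k+n-1]+\tfrac12$, so $j$ ranges over $0\le j\le k+n-1$ in the final determinant;
\item the edge labelled $j$ joins $k+j+\tfrac12$ and $|k-j+\tfrac12|$, which matches the column entries $\eta_i(k+1/2+j)$ and $\eta_i(k+1/2-j)$, using the oddness of $\eta_i$;
\item the centres are $c_j=j+\tfrac12$, and the roots are $z(y_j)=\alpha_j(Ky_j+j+\tfrac12)$, matching the statement.
\end{itemize}
The hypothesis $\xi_i(a)=\epsilon\xi_i(-a)$ with $\epsilon=-1$ is exactly the oddness of $\eta_i$ on $\tfrac12\Z$.

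\textbf{Step 2: admissibility.} The condition $y_j\ge0$ is forced when $c_j\equiv-c_j\pmod K$, that is, when $2j+1\equiv0\pmod{2k+1}$. For $0\le j\le k$ this happens only at $j=k$, which gives the displayed constraint $y_k\ge0$. The condition $z(y_j)\in V$ is the stated restriction $\tfrac12\le z(y_r)\le k+n-\tfrac12$.

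\textbf{Step 3: the sign.} Substitute into the sign of Lemma~\ref{lem: noodle-doodle expansion}.
\begin{itemize}
\item The factor $\epsilon^{\sum(|y_j|-[y_j<0])}$ has exponent congruent to $\sum(y_j+[y_j<0])$ modulo $2$.
\item The term $(K-1)y_j=2ky_j$ is even.
\item The term $2|h^++h^-|[y_j<0]=[y_j<0]$.
\end{itemize}
Adding up, the two copies of $[y_j<0]$ cancel modulo $2$. What remains is $(-1)^{\binom{k+1}{2}+\sum_j(y_j+b_j)}$, as claimed. The $b_j$ agree with the statement's definition, since the indices run over $i>j$ among $0,\dots,k$.

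\textbf{Step 4: the weight.} Use \eqref{eq: product of edge weights}. The shift in the edge labels is
\[
\lceil h^+-h^-\rceil-[y_i<0]\,\lceil|h^++h^-|\rceil=1-[y_i<0]=[y_i\ge0],
\]
and the range $0\le\ell<|y_i|$ is written $\ell\le\alpha_iy_i-1$. This reproduces the stated product.

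\textbf{Step 5: the remaining columns.} The surviving columns are indexed by $v=j+\tfrac12\in V$ with $v\neq z(y_r)$, in increasing order and placed after $M$. This is the displayed determinant.

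The main point needing care is Step~2, and in particular that $j=k$ is the only self-reflecting centre. For $j<k$ we have $2j+1<K$, so $2j+1\not\equiv0\pmod K$. Beyond that, the proof consists of bookkeeping the parity in Step~3.
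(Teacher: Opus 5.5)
Your proposal is correct and follows the paper's route: the paper obtains this lemma by specialising Lemma~\ref{lem: noodle-doodle expansion} with the data in the first row of Table~\ref{tab: six expansions}, and leaves the bookkeeping implicit. You carry out that bookkeeping correctly: $\bar k=k$ and $K=2k+1$; the constraint $y_k\ge0$, since only $c_k=k+\tfrac12$ satisfies $2c_j\equiv0\pmod K$; the cancellation of the two $[y_j<0]$ terms in the sign; and the label shift $1-[y_i<0]=[y_i\ge0]$ in \eqref{eq: product of edge weights}.
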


\begin{lem}[Expansion 1.6b]\label{lem: noodle-doodle expansion 2.1b}
    Let $\eta_1,\dots,\eta_n$ be functions on $\frac{1}{2}\Z$ with $\eta_i(a) = -\eta_i(-a)$. 
    Then
    \begin{multline*}
        \det_{1\le i,j \le n}\big({p_+(j)\eta_i(k+j-1/2)+p_-(j)\eta_i(k-j+1/2)}\big)\\=\sum_{(y_0,\dots,y_{ k-1}) \in\Z^{k}}
        (-1)^{\binom{k}{2}
        +\sum_{j=0}^{k-1}b_j} \prod_{j=1}^{n}p_+(j) 
        \prod_{i=0}^{k-1}
        \prod_{\ell=0}^{\alpha_iy_i-1}
        \frac{p_-(K\ell+k+\alpha_i i+[y_i\geq 0])}{p_+(K\ell+k+\alpha_i i+[y_i\geq 0])}\\\times
         \det_{\substack{1\le i\le n \\0 \le j\le k+n-1 \\ j \not=z(y_r)-1/2, r=0,\dots,k-1}}({\eta_i(j+1/2)}),
    \end{multline*}
    where $K=2k$ and $z(y_j)=\alpha_j(Ky_j+j+1/2)$ 
    with $\alpha_j=1$ if $y_j \geq 0$ and $-1$ otherwise, and where we let $b_j=\vert\{i| z(y_i)> z(y_j)\textrm{ and }i>j\}\vert$.
    The sum is restricted by $\tfrac12\leq z(y_r)\leq k+n-\tfrac12$ for $r=0,\ldots,k-1$.
\end{lem}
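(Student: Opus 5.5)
This is a direct specialisation of Lemma~\ref{lem: noodle-doodle expansion} to the graph $\Lambda_{n,k,0,-1/2}$, with $M$ empty (so $m=n$), $\xi_i=\eta_i$ and $\epsilon=-1$. With $(h^+,h^-)=(0,-\tfrac12)$ we have $h^++h^-=-\tfrac12$, $|h^++h^-|=\tfrac12$, $\bar k=k+\lfloor-\tfrac12\rfloor=k-1$, and $K=2(k+h^+)=2k$. Moreover $\lceil h^+-h^-\rceil=\lceil\tfrac12\rceil=1$ and $\lceil|h^++h^-|\rceil=1$.

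First we match the entries. Edge $j$ joins $k+j+h^++h^-=k+j-\tfrac12$ and $|k-j+h^+-h^-|=|k-j+\tfrac12|$. The column on the left of \eqref{eq: general determinant expansion} is $p_+(j)\eta_i(k+j-\tfrac12)+p_-(j)\eta_i(k-j+\tfrac12)$, which is exactly the entry in the statement. The vertex set is $V=[0,k-1+n]+\tfrac12$, so the surviving columns are $\eta_i(j+\tfrac12)$ for $0\le j\le k+n-1$, omitting the roots. The centers are $c_j=j+\tfrac12$, hence $z(y_j)=\alpha_j(2ky_j+j+\tfrac12)$ as claimed, with the range condition $\tfrac12\le z(y_r)\le k+n-\tfrac12$. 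The extra constraint $y_j\ge0$ when $c_j\equiv-c_j\pmod K$ never applies: that would require $2j+1\equiv0\pmod{2k}$, which is impossible by parity. This explains why, unlike Expansion~1.6a, there is no restriction such as $y_k\ge0$.

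Next we simplify the sign. The factor $\epsilon^{\sum(|y_j|-[y_j<0])}$ has parity $\sum_j(y_j+[y_j<0])$. The factor $(-1)^{\binom{k}{2}+\sum((K-1)y_j+2|h^++h^-|[y_j<0]+b_j)}$ has parity $\binom{k}{2}+\sum_j(y_j+[y_j<0]+b_j)$, since $K-1$ is odd and $2|h^++h^-|=1$. Multiplying the two, the terms $y_j+[y_j<0]$ cancel modulo $2$, leaving $(-1)^{\binom{k}{2}+\sum b_j}$, which is the sign in the statement.

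Finally we rewrite the weight using \eqref{eq: product of edge weights}. The edge labels in $E_A$ become $K\ell+k+\alpha_i i+1-[y_i<0]=K\ell+k+\alpha_i i+[y_i\ge0]$, with $\ell$ running from $0$ to $|y_i|-1=\alpha_iy_i-1$. This matches the statement.

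The only point needing care is this bookkeeping: checking the sign cancellation and that the centre condition is vacuous. No new argument is required.
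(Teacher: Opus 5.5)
Your proposal is correct and matches the paper, which obtains Expansion~1.6b directly as the specialisation of Lemma~\ref{lem: noodle-doodle expansion} to $\Lambda_{n,k,0,-1/2}$ with $\xi_i=\eta_i$, $\epsilon=-1$ and $M$ empty (Table~\ref{tab: six expansions}). Your explicit verification of the sign cancellation to $(-1)^{\binom{k}{2}+\sum b_j}$, of the labels $K\ell+k+\alpha_i i+[y_i\ge 0]$ in $E_A$, and of the vacuity of the condition $c_j\equiv -c_j \pmod K$ fills in exactly the bookkeeping the paper leaves implicit.
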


\begin{lem}[Expansion 1.6c]\label{lem: noodle-doodle expansion 2.1c}
    Let $\zeta_1,\dots,\zeta_n$ be functions on $\Z$ with $\zeta_i(a) =\zeta_i(-a)$. 
    Then
    \begin{multline*}
        \det_{\substack{1\le i \le n\\ 1\leq j\leq n-1}}\big(\matrixCols{\zeta_i(k)}{p_+(j)\zeta_i(k+j)+p_-(j)\zeta_i(k-j)}\big)=\sum_{\substack{(y_0,\dots,y_{ k}) \in\Z^{k +1} \\ y_0,y_k \geq 0}}
        (-1)^{\binom{k+1}{2}
        +\sum_{j=0}^{k}
        (b_j-y_j)}\\\times\prod_{j=1}^{n-1}p_+(j) 
        \prod_{i=0}^{k}
        \prod_{\ell=0}^{\alpha_iy_i-1}
        \frac{p_-(K\ell+k+\alpha_i i)}{p_+(K\ell+k+\alpha_i i)}
         \det_{\substack{1\le i\le n \\0 \le j\le k+n-1 \\ j \not=z(y_r), r=0,\dots,k}}(\matrixCols{\zeta_i(k)}{\zeta_i(j)}),
    \end{multline*}
    where $K=2k$ and $z(y_j)=\alpha_j(Ky_j+j)$ 
    with $\alpha_j=1$ if $y_j \geq 0$ and $-1$ otherwise, and where we let  $b_j=\vert\{i| z(y_i)> z(y_j)\textrm{ and }i>j\}\vert$.
    The sum is restricted by $0\leq z(y_r)\leq k+n-1$ for $r=0,\ldots,k$.
\end{lem}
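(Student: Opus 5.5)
The plan is to obtain Lemma~\ref{lem: noodle-doodle expansion 2.1c} as a direct specialisation of Lemma~\ref{lem: noodle-doodle expansion}. We take the graph $\Lambda_{n-1,k,0,0}$, that is $m=n-1$ and $h^+=h^-=0$. We set $\xi_i=\zeta_i$, $\epsilon=1$, and let $M$ be the $n\times 1$ matrix $(\zeta_i(k))_{i=1}^n$; this is admissible since $n-m=1$. First we record the graph data:
\begin{itemize}
\item $\bar k=k$ and $K=2k$;
\item the vertex set is $V=[0,k+n-1]$ and the centers are $c_j=j$ for $0\le j\le k$;
\item the edge labelled $j$ joins $k+j$ and $|k-j|$.
\end{itemize}
Because $\zeta_i$ is even, $\zeta_i(k-j)=\zeta_i(|k-j|)$. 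Hence the left-hand side of~\eqref{eq: general determinant expansion} is literally the determinant in the statement. The hypothesis $\xi_i(a)=\epsilon\xi_i(-a)$ holds with $\epsilon=1$, so the factor $\epsilon^{\sum(|y_j|-[y_j<0])}$ disappears.

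Next I would translate the summation range~\eqref{eq: admissible roots}. With $h^\pm=0$ the root parametrisation~\eqref{eq: parametrisation of roots} reads $z(y_j)=\alpha_j(2ky_j+j)$. The extra condition $y_j\ge0$ is imposed exactly when $j\equiv -j\pmod{2k}$ with $0\le j\le k$, which means $j=0$ or $j=k$. This gives the restriction $y_0,y_k\ge0$. The condition $z(y_j)\in V$ becomes $0\le z(y_r)\le k+n-1$, as stated.

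For the sign, the general exponent is $\binom{k+1}{2}+\sum_j\bigl((2k-1)y_j+0+b_j\bigr)$, since $2|h^++h^-|=0$. Modulo $2$ we have $(2k-1)y_j\equiv -y_j$, which yields $(-1)^{\binom{k+1}{2}+\sum_j(b_j-y_j)}$. The $b_j$ coincide with those of Lemma~\ref{lem: column order}. For the weight I would use~\eqref{eq: product of edge weights}: with $\lceil h^+-h^-\rceil=\lceil|h^++h^-|\rceil=0$, the labels in $E_A$ are $K\ell+k+\alpha_i i$ for $0\le\ell\le|y_i|-1=\alpha_iy_i-1$. This is exactly the displayed product, and the prefactor is $\prod_{j=1}^{n-1}p_+(j)$. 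Finally, the surviving columns are $\zeta_i(v)$ for $v\in V\setminus\{z_0,\dots,z_k\}$ in increasing order, placed after the column $M$. This is the determinant with $j\neq z(y_r)$.

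The only step needing care is the bookkeeping of the degenerate cases. One case is small $n$, where $m=n-1<k$ and only type-1 edges are present; there the general lemma still applies, since it allows $0\le m\le n$. The other case is the centers $c_0$ and $c_k$, which are not incident to a type-1 edge. Both cases are already handled inside Lemma~\ref{lem: noodle-doodle expansion}, so I expect the main (minor) obstacle to be verifying that the parity simplification and the index range $\ell<\alpha_iy_i$ match the general formula exactly. Repeated columns, for instance when the fixed column $\zeta_i(k)$ coincides with a non-root vertex $k$, need no special treatment: such determinants simply vanish on both sides.
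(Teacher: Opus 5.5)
Your proposal is correct and follows the paper's own route. The paper obtains Lemma~\ref{lem: noodle-doodle expansion 2.1c} by specialising Lemma~\ref{lem: noodle-doodle expansion} to $\Lambda_{n-1,k,0,0}$ with $\xi_i=\zeta_i$, $\epsilon=1$ and $M=(\zeta_i(k))_{i=1}^n$, as recorded in row $1.6c$ of Table~\ref{tab: six expansions}, and your bookkeeping ($K=2k$, $V=[0,k+n-1]$, the restriction $y_0,y_k\ge0$ from $c_j\equiv -c_j \pmod{2k}$, the parity $(2k-1)y_j\equiv -y_j$, and the edge labels $K\ell+k+\alpha_i i$) is exactly the verification the paper leaves implicit.
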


\begin{lem}[Expansion 1.8a]\label{lem: noodle-doodle expansion 2.3a}
    Let $\eta_1,\dots,\eta_n$ be functions on $\Z$ with $\eta_i(a) =-\eta_i(-a)$. 
    Then
    \begin{multline*}
        \det_{\substack{1\le i \le n\\ 1\leq j\leq n-1}}\big(\matrixCols{\eta_i(k+1)}{p_+(j)\eta_i(k+1+j)+p_-(j)\eta_i(k+1-j)}\big)
        \\=\sum_{\substack{(y_0,\dots,y_{k+1}) \in\Z^{k +2} \\ y_0,y_{k+1} \geq 0}}
        (-1)^{\binom{k+2}{2}
        +\sum_{j=0}^{k+1}
        (b_j-[y_j<0])}\prod_{j=1}^{n-1}p_+(j) 
        \prod_{i=0}^{k+1}
        \prod_{\ell=0}^{\alpha_iy_i-1}
        \frac{p_-(K\ell+k+1+\alpha_i i)}{p_+(K\ell+k+1+\alpha_i i)}
         \\\times\det_{\substack{1\le i\le n \\0 \le j\le k+n \\ j \not=z(y_r), r=0,\dots,k+1}}(\matrixCols{\eta_i(k+1)}{\eta_i(j)}),
    \end{multline*}
    where $K=2k+2$ and $z(y_j)=\alpha_j(Ky_j+j)$ 
    with $\alpha_j=1$ if $y_j \geq 0$ and $-1$ otherwise, and where we let $b_j=\vert\{i| z(y_i)> z(y_j)\textrm{ and }i>j\}\vert$.
    The sum is restricted by $0\leq z(y_r)\leq k+n$ for $r=0,\ldots,k+1$.
\end{lem}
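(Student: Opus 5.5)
This lemma should follow by direct specialisation of Lemma~\ref{lem: noodle-doodle expansion}. We take the graph $\Lambda_{n-1,k+1,0,0}$, that is, the general lemma with $k$ replaced by $k+1$, $m=n-1$ and $h^+=h^-=0$. We also take $\xi_i=\eta_i$, $\epsilon=-1$ and $M=(\eta_i(k+1))_{i=1}^n$. The plan is to compute every ingredient of \eqref{eq: general determinant expansion} for these parameters and compare with the claimed formula term by term.

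First we record the parameters. Here $\bar k=k+1$, $K=2(k+1)=2k+2$, $|h^++h^-|=0$, $\lceil h^+-h^-\rceil=0$, and the vertex set is $V=[0,k+1+n-1]=[0,k+n]$. The edge $j$ joins $k+1+j$ and $|k+1-j|$, and the centers are $c_j=j$ for $0\le j\le k+1$. The left-hand side of \eqref{eq: general determinant expansion} is then literally the determinant in the statement, with arguments $k+1+j$ and $k+1-j$. The oddness of $\eta_i$ is exactly the hypothesis $\xi_i(a)=\epsilon\xi_i(-a)$, and $\eta_i$ is defined on $\Z=\Z+|h^++h^-|$, so no rewriting is needed.

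Next we identify the summation range from \eqref{eq: admissible roots}. The roots are $z(y_j)=\alpha_j(Ky_j+j)$, and they must lie in $V$, which is the restriction $0\le z(y_r)\le k+n$. The extra condition $y_j\ge0$ is imposed precisely when $2j\equiv0\pmod{2k+2}$ with $0\le j\le k+1$, i.e.\ for $j=0$ and $j=k+1$. This gives the displayed constraint $y_0,y_{k+1}\ge0$. For the weights we use \eqref{eq: product of edge weights}. Since $\lceil h^+-h^-\rceil=0$ and $\lceil|h^++h^-|\rceil=0$, the edge labels in $E_A$ are $K\ell+(k+1)+\alpha_i i$ for $0\le\ell\le|y_i|-1=\alpha_iy_i-1$. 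This matches the product in the statement, together with the prefactor $\prod_{j=1}^{n-1}p_+(j)$. The determinant has $M$ first, followed by the columns $\eta_i(v)$ for $v\in[0,k+n]\setminus\{z(y_0),\dots,z(y_{k+1})\}$ in increasing order, exactly as claimed. The quantity $b_j$ is the same as in Lemma~\ref{lem: column order}.

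The only step needing care is the sign, which I expect to be the main (if mild) obstacle. By the general lemma it equals
\[
(-1)^{\sum_{j}(|y_j|-[y_j<0])}\,(-1)^{\binom{k+2}{2}+\sum_{j}\left((K-1)y_j+b_j\right)}.
\]
Since $K-1=2k+1$ is odd, $(-1)^{(K-1)y_j}=(-1)^{y_j}$. Since $|y_j|\equiv y_j\pmod 2$, the factors $(-1)^{|y_j|}$ and $(-1)^{y_j}$ cancel. What remains is $(-1)^{\binom{k+2}{2}+\sum_{j=0}^{k+1}(b_j-[y_j<0])}$, which is the stated sign. This completes the specialisation.
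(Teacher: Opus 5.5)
Your proposal is correct and follows exactly the paper's route: the paper obtains this lemma as row $1.8a$ of Table~\ref{tab: six expansions}, i.e.\ Lemma~\ref{lem: noodle-doodle expansion} applied to $\Lambda_{n-1,k+1,0,0}$ with $\xi_i=\eta_i$, $\epsilon=-1$ and $M=(\eta_i(k+1))_{i=1}^n$. The checks you list are exactly the bookkeeping this specialisation needs: the range (with $y_j\ge0$ forced precisely for $j=0,k+1$), the edge labels $K\ell+k+1+\alpha_i i$, and the sign simplification using that $K-1$ is odd and $|y_j|\equiv y_j \pmod 2$.
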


\begin{lem}[Expansion 1.8b]\label{lem: noodle-doodle expansion 2.3b}
    Let $\eta_1,\dots,\eta_n$ be functions on $\Z$ with $\eta_i(a) = -\eta_i(-a)$. 
    Then
    \begin{multline*}
        \det_{1\le i,j \le n}\big({p_+(j)\eta_i(k+j)+p_-(j)\eta_i(k-j)}\big)=\sum_{\substack{(y_0,\dots,y_{ k}) \in\Z^{k + 1} \\y_0,y_k \geq 0}}
        (-1)^{\binom{k+1}{2}
        +\sum_{j=0}^{k}
        (b_j-[y_j<0])} \\
        \times \prod_{j=1}^np_+(j) 
        \prod_{i=0}^{k}
        \prod_{\ell=0}^{\alpha_iy_i-1}
        \frac{p_-(K\ell+k+\alpha_i i)}{p_+(K\ell+k+\alpha_i i)}
         \det_{\substack{1\le i\le n \\0 \le j\le k+n \\ j \not=z(y_r), r=0,\dots,k}}({\eta_i(j)}),
    \end{multline*}
    where $K=2k$ and $z(y_j)=\alpha_j(Ky_j+j)$ 
    with $\alpha_j=1$ if $y_j \geq 0$ and $-1$ otherwise, and where we let $b_j=\vert\{i| z(y_i)> z(y_j)\textrm{ and }i>j\}\vert$.
    The sum is restricted by $0\leq z(y_r)\leq k+n$ for $r=0,\ldots,k$.
\end{lem}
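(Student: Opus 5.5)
This is a direct specialisation of Lemma~\ref{lem: noodle-doodle expansion} to the graph $\Lambda_{n,k,0,0}$ (row $1.8b$ of Table~\ref{tab: six expansions}), with $M$ empty, $m=n$, $\xi_i=\eta_i$ and $\epsilon=-1$. The plan is to substitute $h^+=h^-=0$ into the general statement and simplify each ingredient in turn.

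\textbf{Step 1: read off the parameters.} With $h^+=h^-=0$ we get $\bar k=k$ and $K=2k$. The vertex set is $V=[0,k+n]$ and the centers are $c_j=j$. Edge $j$ joins $k+j$ and $|k-j|$, so the left side of \eqref{eq: general determinant expansion} becomes $\det\bigl(p_+(j)\eta_i(k+j)+p_-(j)\eta_i(|k-j|)\bigr)$. When $j>k$ we have $\eta_i(|k-j|)=-\eta_i(k-j)$ by oddness, which would introduce a sign. So the cleanest route is to apply Lemma~\ref{lem: Noodle-Doodle expansion first form} directly with $f_i=\eta_i$ and $p_s(j)=-p_-(j)$ on type-$2$ edges. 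This is precisely what the proof of Lemma~\ref{lem: noodle-doodle expansion} does, so quoting that lemma with $\epsilon=-1$ is legitimate.

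\textbf{Step 2: the summation range.} The parametrisation is $z(y_j)=\alpha_j(2ky_j+j)$. The condition in \eqref{eq: admissible roots} forces $y_j\ge 0$ exactly when $j\equiv -j \pmod{2k}$, which for $0\le j\le k$ means $j=0$ or $j=k$. This gives the constraints $y_0,y_k\ge0$, and $z(y_r)\in V$ becomes $0\le z(y_r)\le k+n$.

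\textbf{Step 3: simplify the sign.} The factor $\epsilon^{\sum(|y_j|-[y_j<0])}$ equals $(-1)^{\sum(y_j+[y_j<0])}$, since $|y_j|\equiv y_j \pmod 2$. The general sign has exponent $\binom{k+1}{2}+\sum_j\bigl((2k-1)y_j+b_j\bigr)$, because $|h^++h^-|=0$. Since $(2k-1)y_j\equiv y_j$, the two $y_j$ contributions cancel mod $2$. What remains is $(-1)^{\binom{k+1}{2}+\sum_j(b_j+[y_j<0])}$, which agrees with the stated $(-1)^{\sum(b_j-[y_j<0])}$ modulo $2$.

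\textbf{Step 4: the weight.} By \eqref{eq: product of edge weights} with $\lceil h^+-h^-\rceil=0$ and $\lceil|h^++h^-|\rceil=0$, the reversed edges have labels $K\ell+k+\alpha_i i$ for $0\le\ell<|y_i|=\alpha_iy_i$. This gives the displayed ratio product $\prod_{j=1}^n p_+(j)\prod_i\prod_\ell p_-/p_+$. The remaining determinant runs over the columns $v\in[0,k+n]\setminus\{z(y_r)\}$ in increasing order, which is exactly the stated one.

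\textbf{Main obstacle.} The only delicate point is bookkeeping: the reflection sign $\epsilon$ and the term $(K-1)y_j$ must combine correctly. In addition, the vertex $0$ is a legitimate column even though $\eta_i(0)=0$, which the general lemma already covers. I expect no genuine difficulty beyond checking these parities.
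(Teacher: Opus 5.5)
Your proposal is correct and follows the paper's route: the lemma is the $\Lambda_{n,k,0,0}$, $\epsilon=-1$ row of Table~\ref{tab: six expansions} substituted into Lemma~\ref{lem: noodle-doodle expansion}, and your reductions of the summation range, sign and edge weights all check out. The detour in Step~1 is unnecessary: the general lemma's left-hand side is already written with $\xi_i(k-j+h^+-h^-)$ rather than $\xi_i(|k-j|)$, so with $h^\pm=0$ it matches the target left-hand side verbatim.
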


\begin{lem}[Expansion 1.10a]\label{lem: noodle-doodle expansion 2.5a}
    Let $\eta_1,\dots,\eta_n$ be functions on $\Z$ with $\eta_i(a) = -\eta_i(-a)$. 
    Then
    \begin{multline*}
        \det_{1\le i,j \le n}\big({p_+(j)\eta_i(k+j)+p_-(j)\eta_i(k+1-j)}\big)\\=\sum_{\substack{(y_0,\dots,y_{ k}) \in\Z^{k+1}\\ y_0\geq 0}}
        (-1)^{\binom{k+1}{2}
        +\sum_{j=0}^{k}(y_j+b_j-[y_j<0])} \prod_{j=1}^{n}p_+(j) 
        \prod_{i=0}^{k}
        \prod_{\ell=0}^{\alpha_iy_i-1}
        \frac{p_-(K\ell+k+\alpha_i i+1)}{p_+(K\ell+k+\alpha_i i+1)}\\\times
         \det_{\substack{1\le i\le n \\0 \le j\le k+n \\ j \not=z(y_r), r=0,\dots,k}}({\eta_i(j)}),
    \end{multline*}
    where $K=2k+1$ and $z(y_j)=\alpha_j(Ky_j+j)$ 
    with $\alpha_j=1$ if $y_j \geq 0$ and $-1$ otherwise, and where we let $b_j=\vert\{i| z(y_i)> z(y_j)\textrm{ and }i>j\}\vert$.
    The sum is restricted by $0\leq z(y_r)\leq k+n$ for $r=0,\ldots,k$.
\end{lem}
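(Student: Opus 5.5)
Lemma~\ref{lem: noodle-doodle expansion 2.5a} is the instance of Lemma~\ref{lem: noodle-doodle expansion} indexed by the last row of Table~\ref{tab: six expansions}. We take the graph $\Lambda_{n,k,1/2,-1/2}$ (so $m=n$), $\xi_i=\eta_i$, $\epsilon=-1$ and $M$ empty. The proof consists of computing every parameter of Definition~\ref{def: labelled graph} for this choice and substituting into \eqref{eq: general determinant expansion}.

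\emph{Graph data.} With $h^+=\frac12$ and $h^-=-\frac12$ we have $h^++h^-=0$ and $h^+-h^-=1$. Hence
\[
\bar k=k,\qquad K=2k+1,\qquad V=[0,k+n],\qquad c_j=j \ (0\le j\le k).
\]
Edge $j$ joins $k+j$ and $|k-j+1|$. Consequently the left-hand side of \eqref{eq: general determinant expansion} reads $\det\bigl(p_+(j)\eta_i(k+j)+p_-(j)\eta_i(k+1-j)\bigr)$, which is exactly the left-hand side of the lemma. The function $\xi_i=\eta_i$ is defined on $\Z+|h^++h^-|=\Z$ and satisfies $\eta_i(a)=-\eta_i(-a)$, as the general lemma requires.

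\emph{Summation range.} The root parametrisation \eqref{eq: parametrisation of roots} becomes $z(y_j)=\alpha_j(Ky_j+j)$. The extra condition in \eqref{eq: admissible roots} asks for $y_j\ge0$ whenever $2j\equiv0\pmod{2k+1}$. For $0\le j\le k$ this holds only when $j=0$, so the condition is exactly $y_0\ge0$. The requirement $z(y_r)\in V$ is the stated restriction $0\le z(y_r)\le k+n$. The surviving columns are indexed by $V\setminus\{z_0,\dots,z_k\}$ in increasing order, which is the displayed index range $0\le j\le k+n$ with $j\ne z(y_r)$.

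\emph{Sign.} The sign contributed by the general lemma has three pieces:
\begin{itemize}
\item $\epsilon^{\sum_j(|y_j|-[y_j<0])}=(-1)^{\sum_j(|y_j|-[y_j<0])}$;
\item the term $(K-1)y_j=2ky_j$, which is even;
\item the term $2|h^++h^-|[y_j<0]$, which vanishes.
\end{itemize}
Since $|y_j|\equiv y_j\pmod 2$, the total sign is $(-1)^{\binom{k+1}{2}+\sum_j(y_j+b_j-[y_j<0])}$, as claimed.

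\emph{Weight.} We use \eqref{eq: product of edge weights}. Here $\lceil h^+-h^-\rceil=1$ and $\lceil|h^++h^-|\rceil=0$, so the labels of the reversed edges are $K\ell+k+\alpha_i i+1$ for $0\le\ell<|y_i|=\alpha_iy_i$. This gives the stated product of ratios $p_-/p_+$ multiplied by $\prod_{j=1}^n p_+(j)$.

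There is no real obstacle in this argument. The only points needing care are the bookkeeping ones: checking that $j=0$ is the unique center equal to its own reflection modulo $2k+1$, which is what produces the single constraint $y_0\ge0$, and confirming that the parity reductions in the sign are legitimate.
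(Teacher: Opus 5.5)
Your proposal is correct and takes essentially the paper's route. The paper obtains this lemma as the last row of Table~\ref{tab: six expansions}, that is, as Lemma~\ref{lem: noodle-doodle expansion} specialised to $\Lambda_{n,k,1/2,-1/2}$ with $\xi_i=\eta_i$, $\epsilon=-1$ and empty $M$; your computation of $\bar k=k$, $K=2k+1$, $V=[0,k+n]$, the single constraint $y_0\ge0$, the parity reduction of the sign, and the reversed-edge labels $K\ell+k+\alpha_i i+1$ supplies the bookkeeping the paper leaves implicit.
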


\subsection{A partition complement lemma}
\label{sec:lemmas.complements}
Given a partition $\lambda$, let $[\lambda]$ denote its Young diagram, thought as a collection of points (cells). Using English notation, the cells are indexed in a matrix-like fashion
\[
[\lambda] = \{(i,j)\in\Z^2 \mid 1\le j \le \lambda_i\}.
\]
In what follows, we think of partitions as embedded in a prescribed rectangular partition, either $(n^k)$ or $(k^n)$. The \emph{flip} of a partition $\mu\subseteq(n^k)$ is the set of cells
\[
\mathrm{flip}_{k^n}\mu = \{
(i,j)\in\Z^2 \mid 1\leq j\leq k\textrm{ and } n-\mu_{k-j+1}<i\le n 
\}\subseteq(k^n).
\]
The \emph{box-complementary} of a partition $\nu\subseteq(n^k)$ is the partition $\nu^{\square n^k}\subseteq(n^k)$ such that $\nu^{\square n^k}_i = n-\nu_{k+1-i}$.
For instance,
\[
\ytableausetup{smalltableaux,centertableaux}
[(3,3,1)] = \ydiagram{4,4,4}*[\circ]{3,3,1}*[*(yellow)]{3,3,1}
\,,\quad
\mathrm{flip}_{4^3}(3,3,1)  = \ydiagram{4,4,4}*[\cdot]{2+2,2+2,1+3}*[*(blue!20)]{2+2,2+2,1+3}\,,
\quad\text{and}\quad
(3,3,1)^{\square 4^3}  = (3,1,1).
\]
Let $\rho_a$ be the partition $(a-1, a-2, \dots, 1)$. Partitions are added entry-wise, and their union is taken as the union of multisets. Part of the following useful lemma appears in~\cite{2-row}.
\begin{lem}\label{lem:flip}
    Given two partitions $\lambda \subseteq (k^n)$ and $\mu \subseteq (n^k)$, the following are equivalent:
    \begin{enumerate}
        \item $(\lambda+\rho_n)\sqcup(\mu + \rho_k) = \rho_{k+n}$,
        \item $[\lambda] \sqcup \mathrm{flip}_{k^n}\mu = (k^n)$,
        \item $\lambda = (\mu^{\square n^k})'$,
        \item $m_{k-i}(\lambda) = \mu_i - \mu_{i+1}$ for all $i=0,\ldots,k-1$, where $\mu_0 = n$,
        \item $m_{k-i}(\lambda) = (\mu + \rho_k)_i - (\mu+\rho_k)_{i+1} - 1$ for all $i=0,\ldots,k-1$, where $(\mu+\rho_k)_0 = n+k$.
    \end{enumerate}
\end{lem}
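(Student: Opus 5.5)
The plan is to take (3) as the hub: show that each of (1), (2), (4) and (5) is equivalent to (3). Throughout I use the conjugate description: (3) says exactly that $\lambda'_j=n-\mu_{k+1-j}$ for $1\le j\le k$. This makes sense because $\lambda\subseteq(k^n)$ forces $\ell(\lambda')\le k$, and $\mu\subseteq(n^k)$ makes every $n-\mu_{k+1-j}$ lie in $[0,n]$. Throughout, $\rho_a$ is read as the length-$a$ vector $(a-1,\dots,1,0)$, and the union in (1) is read as a disjoint union of sets.

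\textbf{(2)$\Leftrightarrow$(3).} This is a column-by-column comparison. In column $j$, the diagram $[\lambda]$ occupies rows $1,\dots,\lambda'_j$. By definition, $\mathrm{flip}_{k^n}\mu$ occupies rows $n-\mu_{k-j+1}+1,\dots,n$. These two intervals partition $[1,n]$ exactly when $\lambda'_j=n-\mu_{k+1-j}$. Doing this for every $j$ gives (3).

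\textbf{(1)$\Leftrightarrow$(3).} I will invoke the classical complementation lemma (Macdonald, Ch.~I, (1.7)). For $\lambda\subseteq(k^n)$, the sets
\[
\{\lambda_i+n-i: 1\le i\le n\}\qquad\text{and}\qquad \{n-1+j-\lambda'_j: 1\le j\le k\}
\]
partition $[0,n+k-1]$. The first set is exactly $\lambda+\rho_n$. Now substitute $j=k+1-i$ in the second set. If (3) holds, its elements become
\[
n+k-i-\lambda'_{k+1-i}=\mu_i+k-i=(\mu+\rho_k)_i,
\]
so the second set is $\mu+\rho_k$ and (1) follows. Conversely, the complement of $\lambda+\rho_n$ in $[0,n+k-1]$ is uniquely determined. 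A strictly decreasing vector of length $k$ with that underlying set is unique, and from it one reads off $\mu$. Hence (1) forces $\mu+\rho_k$ to equal the vector above, which is (3).

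\textbf{(3)$\Leftrightarrow$(4), and (4)$\Leftrightarrow$(5).} For $1\le r\le k$ we have $m_r(\lambda)=\lambda'_r-\lambda'_{r+1}$, with the convention $\lambda'_{k+1}=0$. Setting $r=k-i$ and substituting (3) gives
\[
m_{k-i}(\lambda)=(n-\mu_{i+1})-(n-\mu_i)=\mu_i-\mu_{i+1}.
\]
For $i=0$ this reads $m_k(\lambda)=\lambda'_k=n-\mu_1$, which is why the convention $\mu_0=n$ is the right one. For the converse, the values $m_k,\dots,m_1$ determine $\lambda'_k,\lambda'_{k-1},\dots,\lambda'_1$ by telescoping downward from $\lambda'_k=m_k$. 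Hence (4) recovers (3). Finally, $(\mu+\rho_k)_i-(\mu+\rho_k)_{i+1}=\mu_i-\mu_{i+1}+1$ for $1\le i\le k-1$. For $i=0$, the convention $(\mu+\rho_k)_0=n+k$ gives $n+k-\mu_1-(k-1)-1=n-\mu_1$. So (5) is a verbatim restatement of (4).

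The only delicate points are the index bookkeeping in (1) and the boundary conventions $\mu_0=n$, $(\mu+\rho_k)_0=n+k$ and $\lambda'_{k+1}=0$. I expect matching the reflection $j\mapsto k+1-j$ in Macdonald's lemma to be the one step needing care.
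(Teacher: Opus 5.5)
Your proof is correct, but it is organised differently from the paper's. The paper uses (2) as the hub. It proves (1)$\Leftrightarrow$(2) by an explicit algorithm that paints the staircase $\rho_{n+k}$ one row or column at a time. It reverses this by walking along the lattice path from $(0,0)$ to $(k,n)$ that separates $[\lambda]$ from $\mathrm{flip}_{k^n}\mu$, recording a part of $\lambda+\rho_n$ or of $\mu+\rho_k$ at each north or east step. It then reads (4) off the same path by counting north steps after each east step, and it declares (2)$\Leftrightarrow$(3) and (4)$\Leftrightarrow$(5) clear. You instead route everything through (3). You outsource (1)$\Leftrightarrow$(3) to Macdonald's complementation lemma I.(1.7), which is precisely the fact the paper's painting/lattice-path argument reproves from scratch. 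You obtain (3)$\Leftrightarrow$(4) algebraically from $m_r(\lambda)=\lambda'_r-\lambda'_{r+1}$ and telescoping. Your version buys brevity and checkability: every step is an explicit index computation. The converse of (1) is handled cleanly by the uniqueness of the strictly decreasing arrangement of the complement. You also verify conventions the paper leaves implicit: $\mu_0=n$, $(\mu+\rho_k)_0=n+k$, and reading $\rho_a$ as a length-$a$ vector ending in $0$. The paper's version is self-contained and gives one picture, the lattice path, that explains all five conditions at once, at the cost of being less formal.
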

\begin{defn}\label{de:lambda(nu)}
    Let $\nu$ be a composition and suppose that $\lambda$ and $\mathrm{sort}(\nu) = \mu+\rho_k$ are related in any of the above equivalent ways. Then, we write $\lambda = \lambda(\nu)$. See Figure~\ref{fig:flip}.
\end{defn}
\begin{proof}[Proof of Lemma~\ref{lem:flip}]
    Start by supposing $(\lambda+\rho_n) \sqcup (\mu+\rho_k) = \rho_{n+k}$.
    We are going to construct the following illustration step by step, where $\lambda$ is in yellow $\ytableaushort[*(yellow)]{{\circ}}$ and $\mathrm{flip}_{k^n}\mu$ in blue~$\ytableaushort[*(blue!20)]{{\cdot}}$. Once we are done, we will have established (1) implies (2).
    \[
    \ytableausetup{boxsize=.8em}
    \ydiagram[*(lightgray)]{1+8,2+7,3+6,4+5,5+4,6+3,7+2,8+1}
    \quad \leadsto \cdots \leadsto
    \ydiagram{1+8,2+7,3+6,4+5,5+4,6+3,7+2,8+1}*[*(lightgray)]{0,2+5,3+4,4+3,5+2,6+1}*[\cdot]{8+1,7+2,7+2,7+2}*[*(blue!20)]{8+1,7+2,7+2,7+2}*[\circ]{4+4,4+4}*[*(yellow)]{4+4,4+4}
    \quad \leadsto \cdots \leadsto
    \ydiagram{1+8,2+7,3+6,4+5,5+4,6+3,7+2,8+1}*[\cdot]{8+1,7+2,5+4,4+5}*[*(blue!20)]{8+1,7+2,5+4,4+5}*[\circ]{4+4,4+3,4+1}*[*(yellow)]{4+4,4+3,4+1}
    \]
    We start with a gray staircase $\rho_{n+k}$.

    In the first step, either $\mu_1+k-1 = n+k-1$ or $\lambda_1+n-1 = n+k-1$. In the first case, we have $\mu_1=n$, so we paint the first row with $k-1$ white and $n$ blue $\ytableaushort[*(blue!20)]{{\cdot}}$ cells. Otherwise, $\lambda_1=k$ and we color the last column with $k$ yellow $\ytableaushort[*(yellow)]{{\circ}}$ cells and $n-1$ white cells.

    In the $j$-th step, we have some $m$ rows and some $\ell$ columns with painted cells, and we are left with a gray staircase of size $n+k-m-\ell-1$.
    
    The biggest part of $\mu+\rho_k$ that we haven't used to color some row is $\mu_{m+1}+k-m-1$, and the biggest part of $\lambda+\rho_n$ not used in any column is $\lambda_{\ell+1}+n-\ell-1$. Either of these two quantities is equal to $n+k-m-\ell-1$, since this is the largest part of $\rho_{n+k}$ not still used in any row or column. Paint either the top row or the last column of the gray staircase accordingly. 
    
    After the $(n+k)$-th step, we get a $k\times n$ rectangle tiled by (translates of) $[\lambda]$ and $\mathrm{flip}_{k^n}\mu$.
    \medskip

    To go from (2) to (1), we reverse the algorithm: start with the full picture.
    Think of the boundary between blue $\ytableaushort[*(blue!20)]{{\cdot}}$  and yellow $\ytableaushort[*(yellow)]{{\circ}}$ cells as a lattice path from $(0,0)$ to $(k,n)$.
    Place a bullet at the $0$th coordinate of the lattice path, which is $(0,0)$.
    The first step is either north or east; move the bullet accordingly. 

    In the $j$-th step we have a gray staircase $\rho_j$ whose corner is touching the $j$-th coordinate of the lattice path. If the next step is north, then this means that $\lambda+\rho_n$ has a part of size~$j$; if the next step is east then this means that $\mu+\rho_k$ has a part of size~$j$. In either case, move the bullet point and grow the gray staircase accordingly.
    \[
    \ydiagram{1+8,2+7,3+6,4+5,5+4,6+3,7+2,8+1}*[*(lightgray)]{0,2+5,3+4,4+3,5+2,6+1}*[\cdot]{8+1,7+2,7+2,7+2}*[*(blue!20)]{8+1,7+2,7+2,7+2}*[\circ]{4+4,4+4}*[*(yellow)]{4+4,4+4}
    \begin{tikzpicture}[remember picture, overlay, x=.85em, y=.85em, xshift=0em, yshift=.2em]
        \fill[white, opacity=.3] (-8,-4) rectangle (0.1,4.1);
        \draw[ultra thick] (-5,0) -- (-5,1) -- (-4,1) -- (-4,2) -- (-2,2) -- (-2,3) -- (-1,3) -- (-1,4) -- (0,4);
        \filldraw[fill=red, thick] (-2,3) circle (3pt);
    \end{tikzpicture}
    \]
    When the bullet arrives to $(k,n)$, we have that seen that all the possible sizes of parts $\{1, 2, \ldots, n+k-1\}$ that form $\rho_{n+k}$ are either coming from $\lambda+\rho_n$ or from $\mu+\rho_k$.\medskip

    That (2) and (3) are equivalent is clear from the definitions. Similarly, (4) and (5) are clearly equivalent. It remains thus to show (2) and (4) are equivalent.\medskip

    To show that (2) and (4) are equivalent, we come back to the lattice path defined above, which separates $[\lambda]$ from $\mathrm{flip}_{k^n}\mu$. The multiplicity $m_i(\lambda)$ is given by the number of north steps after the $i$-th east step of this path. But this is also counted by the difference of two consecutive parts of $\mu$, as claimed.
\end{proof}
\begin{figure}[h]
    \centering
    \hfill
    \ytableausetup{boxsize=.5em}
    $\nu = \ydiagram[*(lightgray!15)]{6,3,13,14,8,2,15,10,9}$
    \hfill
    $\mathrm{sort}(\nu) - \rho_{10} = \ydiagram[*(lightgray!15)]{6,6,6,4,4,4,3,1,1}$
    \hfill
    $\lambda(\nu)=
    \ydiagram[{\raisebox{.1em}{\scalebox{.5}{$\circ$}}}]{10,7,7,4,3,3,1}*[*(yellow)]{10,7,7,4,3,3,1}*[*(lightgray!15)]{10,10,10,10,10,10,10}$
    \hfill{\,}
    \caption{Let $n=7$ and $k=10$. Then $\lambda(6,3,13,14,8,2,15,10,9,0) = (10,7^2,4,3^2,1)$.}
    \label{fig:flip}
\end{figure}

The main way we use this construction is via the next corollary.
\begin{cor}\label{cor:useful.complementary}
    Fix $c\in\Z+\frac{1}{2}\Z$.
    Let $\epsilon \in \{\pm1\}$ and consider a family $\{\xi_i(m) \mid i\in[n],~m\in\Z+c\}$ of elements of a ring $R$.
    Let $\lambda = (\lambda_1, \ldots, \lambda_n)$ be a partition. If $\lambda = \lambda(\nu)$ then
    \[
    (-1)^{\binom{n}{2}}\det_{1\le i,j \le n}(\xi_i(\lambda_j + n-j + c)) = \det_{\substack{1\le i\le n \\ j\in([0,n+k-1]+c)\setminus\{\nu_a+c \mid a=1,\dots,k\}}}(\xi_i(j))^{1\le i\le n}.
    \]
    The converse also holds if there are no non-trivial algebraic relations between the $\{\xi_i(m) \mid i\in[n],~m\in\Z+c\}$.
\end{cor}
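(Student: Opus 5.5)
The plan is to reduce everything to the set-theoretic characterisation~(1) of Lemma~\ref{lem:flip}, followed by a column reversal. Suppose $\lambda=\lambda(\nu)$. By Definition~\ref{de:lambda(nu)} and Lemma~\ref{lem:flip}, this means $\mathrm{sort}(\nu)=\mu+\rho_k$ for some $\mu\subseteq(n^k)$, and
\[
(\lambda+\rho_n)\sqcup(\mu+\rho_k)=\rho_{n+k}.
\]
Here $\rho_{n+k}$ is read as the $n+k$ distinct values $n+k-1,\dots,1,0$, padding with zero parts, and similarly for $\rho_n$ and $\rho_k$. The entries of $\lambda+\rho_n$ are $\lambda_j+n-j$ for $1\le j\le n$, and the entries of $\mu+\rho_k$ are exactly $\nu_1,\dots,\nu_k$. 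Hence the disjoint union above says
\[
\{\lambda_j+n-j: 1\le j\le n\}=[0,n+k-1]\setminus\{\nu_1,\dots,\nu_k\}.
\]
Shifting by $c$, the column indices on the left-hand side of the corollary are exactly those on the right-hand side.

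Next I compare column orders. On the right-hand side the columns are indexed in increasing order, following the paper's convention. On the left, $j\mapsto\lambda_j+n-j$ is strictly decreasing, since $\lambda$ is a partition. So the left matrix is the right matrix with its $n$ columns reversed. Reversing $n$ columns is a permutation with $\binom{n}{2}$ inversions, and contributes the sign $(-1)^{\binom n2}$. This proves the forward implication. The parity $\epsilon$ plays no role here; it is only part of the setting in which the corollary is applied.

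For the converse, assume the $\xi_i(m)$ are algebraically independent, $\lambda$ is a partition, and the identity holds. Both sides are then determinants of $n\times n$ matrices whose entries are distinct indeterminates, so both are nonzero polynomials. The left side is nonzero because $\lambda_j+n-j$ are distinct; the right side is nonzero because there are exactly $n$ remaining columns, given that the $\nu_a$ are distinct and lie in $[0,n+k-1]$, which is implicit in the statement. Each Leibniz monomial $\prod_i\xi_i(m_{\sigma(i)})$ determines the set of column indices used. Equality of the two polynomials therefore forces the two column-index sets to coincide, that is,
\[
\{\lambda_j+n-j\}=[0,n+k-1]\setminus\{\nu_a\}.
\]
This is condition~(1) of Lemma~\ref{lem:flip}, with $\mu=\mathrm{sort}(\nu)-\rho_k$. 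To use the lemma we must check that $\mu$ is a partition contained in $(n^k)$. This holds because $\mathrm{sort}(\nu)$ is strictly decreasing with entries in $[0,n+k-1]$. We must also check that $\lambda\subseteq(k^n)$, which holds because $\lambda_1+n-1\le n+k-1$. Therefore $\lambda=\lambda(\nu)$.

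The main obstacle is bookkeeping rather than substance: matching the "$\sqcup$ of partitions" in Lemma~\ref{lem:flip}(1) with sets of distinct values including $0$, and verifying the containment hypotheses in the converse. The sign computation itself is routine.
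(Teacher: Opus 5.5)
Your proposal is correct and follows the paper's proof. The forward direction is the set identity from Lemma~\ref{lem:flip}(1) together with the column reversal, which gives the sign $(-1)^{\binom n2}$. The converse uses algebraic independence to equate the column-index sets and then reads off condition~(1). Your version is slightly more careful than the paper's, because you explicitly check that the right-hand matrix is square, that $\mathrm{sort}(\nu)-\rho_k\subseteq(n^k)$ and that $\lambda\subseteq(k^n)$ before invoking the lemma.
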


\begin{proof}
    If $\lambda = \lambda(\nu)$ then the identity is clear after noting that in the left-hand side the arguments are in decreasing order in $j$, whereas the arguments on the right-hand side are in increasing order; reordering columns we obtain a sign $(-1)^{\binom{n}{2}}$.

    Let us now prove the reciprocal.
    Since there are no algebraic relations between the $\xi_i(m)$ the equality of determinants implies the following identity of the arguments on the left and right-hand sides:
    \[
    \{\lambda_j + n - j + c \mid j=1,\ldots,n\} = ([0,n+k-1]+c)\setminus\{\nu_a+c ~\mid~ a=1,\dots,k\}.
    \]
    Cancel the summand of $c$ from the three sets. If $\lambda = (\lambda_1, \ldots, \lambda_n)$ is a partition then the left-hand side becomes the multiset of parts of $\lambda + \rho_n$. 
    On the right-hand side, note that $[0,n+k-1]$ is the set of parts of $\rho_{n+k}$. Hence if $\nu = (\nu_1, \ldots, \nu_k)$ is a composition then we have
    \[
    (\lambda + \rho_n) \sqcup \nu = \rho_{n+k},
    \]
    which is to say $\lambda = \lambda(\nu)$ as claimed.
\end{proof}

\subsection{Handling the summation range}\label{sec: Handling the summation range}
The following lemma presents a subtle argument necessary for deducing Corollaries \ref{cor-b}, \ref{cor-type-c}, and \ref{cor-type-bc} from Theorems \ref{thm-b-char}, \ref{main-type-c}, and \ref{thm-type-bc-0-char-all}, respectively.
\begin{lem}\label{lem: handling the summation range}
Let $n,k\in \Z_{\geq 0}$, $\nu\in\Z_{\geq 0}^k$ then $\lambda(\nu)$ is a partition inside $(k^n)$ if and only if $\nu_1,\dots,\nu_k\in[0,n+k-1]$ are all different elements. For the determinant expressions below, we use $\lambda(\nu)'_i=n-\mathrm{sort}(\nu)_{k+1-i}+i-1$ even if this sequence is not a partition. Furthermore, we have that
\begin{align*}
     \det\limits_{1\le i,j\le k}(\dot e_{\lambda(\nu)'_i-i+j}+ \dot e_{\lambda(\nu)'_i-i-j+1})&=\oo_{\lambda(\nu)}(X)\\
    \det\limits_{1\le i,j\le k}(\dot e_{\lambda(\nu)'_i-i+j}-\dot  e_{\lambda(\nu)'_i-i-j})&=\sp_{\lambda(\nu)}(X)\\
    \frac{1}{2}\det\limits_{1\le i,j\le k}(\dot e_{\lambda(\nu)'_i-i+j} +\dot e_{\lambda(\nu)'_i-i-j+2})&=\oe_{\lambda(\nu)}(X)\qquad(k\geq1).
\end{align*}
if $\nu_1,\dots,\nu_k\in[0,n+k-1]$ are all different elements and otherwise the left hand side of each of these equations is zero. Note that the right hand side is then zero too by \eqref{eq: convention not a partiton} so equality still holds.
\end{lem}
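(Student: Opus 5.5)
Write $\mathrm{sort}(\nu)=(s_1\ge\dots\ge s_k)$ and define the integer vector $\beta=(\beta_1,\dots,\beta_k)$ by $\beta_i=\lambda(\nu)'_i=n-s_{k+1-i}+i-1$. The whole argument is organised around the strictly decreasing vector
\[
\beta_i-i+k=n+k-1-s_{k+1-i}.
\]
If the $\nu_a$ are distinct elements of $[0,n+k-1]$, then $s$ is strictly decreasing with entries in $[0,n+k-1]$. Setting $\mu=s-\rho_k$ gives a partition with $\mu\subseteq(n^k)$, since $\mu_i=s_i-(k-i)\le n$ and $\mu_k=s_k\ge0$. Then $\lambda(\nu)=(\mu^{\square n^k})'$ by Lemma~\ref{lem:flip}(3), so $\lambda(\nu)\subseteq(k^n)$ is a partition. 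Conversely, suppose $\beta$ is a partition with $\beta\subseteq(n^k)$, i.e.\ $\lambda(\nu)\subseteq(k^n)$. Then $\beta_i\ge\beta_{i+1}$ translates to $s_{k+1-i}>s_{k-i}$, which gives distinctness. Also $\beta_1\le n$ gives $s_k\ge 0$, and $\beta_k\ge0$ gives $s_1\le n+k-1$. This settles the first claim.

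For the determinant identities in the good case, I read each left-hand side as the dual Jacobi--Trudi determinant \eqref{eq: dual JT type B}, \eqref{eq: dual JT type C}, \eqref{eq: dual JT type D} with conjugate partition $\beta=\lambda(\nu)'$. Since $\lambda(\nu)\subseteq(k^n)$, these identities apply directly.

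The main work is the vanishing claim when $\nu$ fails the condition. Here I treat row $i$ of each $k\times k$ matrix as a function of the single quantity $\beta_i-i=n-1-s_{k+1-i}$. There are two ways the condition can fail.

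\emph{Repeated entries.} If $s_{k+1-i}=s_{k-i}$, then $\beta_i-i=\beta_{i+1}-(i+1)$, so rows $i$ and $i+1$ coincide and the determinant vanishes.

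\emph{An entry outside the range.} Since $\nu\in\Z^k_{\ge0}$, the only possibility is some $s_j\ge n+k$. After ruling out repeats, I may take $s_1\ge n+k$ (the largest entry), which gives $\beta_k-k\le -k-1$. For every $j\le k$, the indices in row $k$ then satisfy $\beta_k-k+j\le -1$. In type C the second index is $\beta_k-k-j<0$, and in type B it is $\beta_k-k-j+1<0$. So every entry of row $k$ is $0$ in types B and C. In type D the second index is $\beta_k-k-j+2$, which is $\le 0$ only when $j\ge1$; this is negative unless $\beta_k=-1$ and $j=1$, i.e.\ $s_1=n+k$. That boundary case is the delicate one.

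I expect the main obstacle to be exactly this boundary behaviour of the reflected terms: $\dot e_0=1$ entries appearing in types B and D when $s_1$ equals or slightly exceeds $n+k-1$. For those cases I would also use the upper cutoff $\dot e_r=0$ for $r>2n$, and if needed the symmetry $\dot e_r=\dot e_{2n-r}$, which holds because $(X,\overline X)$ is closed under inversion. The plan is to show that row $k$ either vanishes or equals a combination of other rows; for instance, in type D with $\beta_k=-1$, row $k$ reduces to a unit vector, and I would compare it with the rows coming from the other small $s$ values. Once the left-hand sides are shown to vanish, the equality with the right-hand sides follows from the convention \eqref{eq: convention not a partiton}, as the statement notes.
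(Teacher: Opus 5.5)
Your overall strategy coincides with the paper's. You check the equivalence directly from $\beta_i=n-s_{k+1-i}+i-1$, and in the good case you quote the dual Jacobi--Trudi identity. A repeated entry gives two equal rows because row $i$ depends only on $\beta_i-i$, and an entry $\ge n+k$ forces $\beta_k\le-1$ and kills row $k$. Types B and C are handled correctly. There is one small slip: $\beta_i\ge\beta_{i+1}$ is equivalent to $s_{k-i}>s_{k+1-i}$, not $s_{k+1-i}>s_{k-i}$, though your conclusion of distinctness is unaffected.

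The genuine problem is type D, which you leave unfinished because of an arithmetic error. You claim the index $\beta_k-k-j+2$ can fail to be negative when $\beta_k=-1$ and $j=1$. But there it equals $-1-k-1+2=-k$, which is negative since $k\ge1$. In general, for $1\le j\le k$ and $\beta_k\le-1$,
\[
\beta_k-k-j+2\le\beta_k-k+1\le -k\le -1 .
\]
So every entry of row $k$ vanishes in type D exactly as in types B and C, and this inequality is precisely what the paper uses.

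There is therefore no boundary case, and none of the extra tools you invoke are needed: the cutoff $\dot e_r=0$ for $r>2n$, the symmetry $\dot e_r=\dot e_{2n-r}$, and the comparison of a unit-vector row with other rows. The last of these would not by itself force the determinant to vanish in any case. Also, $s_1=n+k-1$ does not belong to the out-of-range case at all. Insert that one inequality and delete your final paragraph, and your proof is complete and matches the paper's.
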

\begin{proof}
    For $k=0$, the first assertion and the first two identities are immediate, with empty determinants equal to $1$. We may therefore assume that $k\geq1$.

    First assume that $\lambda(\nu)$ is a partition inside $(k^n)$, thus $\lambda(\nu)'$ is a partition inside $(n^k)$. By definition we have that $\lambda(\nu)'_i=n-\mathrm{sort}(\nu)_{k+1-i}+i-1\in[0,n],$
    thus $\mathrm{sort}(\nu)_{k+1-i}\in[i-1,n+i-1]$. Therefore, all parts $\nu_1,\dots,\nu_k$ are inside $[0,n+k-1]$. Assume two of them were equal, then there exists $i_1\in[1,k-1]$ such that $\mathrm{sort}(\nu)_{k+1-i_1}=\mathrm{sort}(\nu)_{k-i_1}$. But then we have that $$\lambda(\nu)'_{i_1}=n-\mathrm{sort}(\nu)_{k+1-i_1}+i_1-1<n-\mathrm{sort}(\nu)_{k-i_1}+i_1=\lambda(\nu)'_{i_1+1},$$
    and thus $\lambda(\nu)'$ and subsequently $\lambda(\nu)$ would not be partitions. This contradicts our assumption and thus $\nu_1,\dots\nu_k$ all have to be different elements. 

    Now assume that $\nu_1,\dots,\nu_k\in[0,n+k-1]$ are all different elements. Then $i-1\leq \mathrm{sort}(\nu)_{k+1-i}\leq n+i-1$ and hence $n\geq \lambda(\nu)_i'=n-\mathrm{sort}(\nu)_{k+1-i}+i-1\geq 0$ and is weakly decreasing as 
    $$\lambda(\nu)_i'=n-\mathrm{sort}(\nu)_{k+1-i}+i-1\geq n-\mathrm{sort}(\nu)_{k-i}+i=\lambda(\nu)_{i+1}',$$
    for $1\leq i<k$, as $\mathrm{sort}(\nu)_{k+1-i}+1\leq \mathrm{sort}(\nu)_{k-i}$ because all the $\nu_i$ are different. Therefore $\lambda(\nu)'$ is a partition inside $(n^k)$ and so $\lambda(\nu)$ is a partition inside $(k^n)$.

    To prove the claim about the determinants, first note that if $\nu_1,\dots,\nu_k\in[0,n+k-1]$ are all different elements then $\lambda(\nu)$ is a partition as we just proved and then the formulae are just the dual Jacobi--Trudi identities from \eqref{eq: dual JT type B}, \eqref{eq: dual JT type C} and \eqref{eq: dual JT type D}. If the condition is not satisfied then there either exist $i,i'\in[1,k]$ with $i\neq i'$ such that $\nu_i=\nu_{i'}$ or there exists an $i\in[1,k]$ such that $\nu_i\geq n+k$. 
    
    In the first case we then have that there exists an $\ell\in[1,k-1]$ such that $\mathrm{sort}(\nu)_{k+1-\ell}=\mathrm{sort}(\nu)_{k-\ell}$. Therefore
    $$\lambda(\nu)'_{\ell}=n-\mathrm{sort}(\nu)_{k+1-\ell}+\ell-1=n-\mathrm{sort}(\nu)_{k-\ell}+\ell-1=\lambda(\nu)'_{\ell+1}-1,$$
    and thus $\lambda(\nu)'_\ell-\ell=\lambda(\nu)'_{\ell+1}-(\ell+1)$. Therefore the $\ell$-th and $(\ell+1)$-st rows of each of the three matrices agree. Hence the determinant vanishes and the left hand side is zero in each of the three cases.

    In the second case we get that $\mathrm{sort}(\nu)_1\geq n+k$. Therefore, we have $\lambda(\nu)'_k=n-\mathrm{sort}(\nu)_{1}+k-1\leq -1$. For $1\leq j\leq k$, we have
    \[
    \lambda(\nu)'_k-k+j\leq\lambda(\nu)'_k\leq-1,
    \qquad
    \lambda(\nu)'_k-k-j+2\leq\lambda(\nu)'_k-k+1\leq-1.
    \]
    Thus all indices in the $k$-th row of each of the three matrices are negative, and so this row vanishes since $\dot e_r=0$ for $r<0$. Then also the determinant vanishes and the left hand side is again zero in all three cases. 
\end{proof}
\section{Proofs of Theorem \ref{thm-b-char} and Corollary \ref{cor-b}}
\label{sec:type-b}
In this section, we provide proofs of all six formulae related to type $\mathrm{B}_n$ presented in Theorem \ref{thm-b-char} and Corollary \ref{cor-b}. We present the proofs of the formulae grouped by the parameter specialisations of $t,s$ in the $\mathrm{B}_n$ Hall--Littlewood polynomials. 
\subsection{The case \texorpdfstring{$s=0$}{s=0}.}
We begin with the first of three formulae \eqref{thm-type-b-0-char} of Theorem~\ref{thm-b-char}, namely
\[
    P_{(k^n)}^{\mathrm{B}_n}(X;t,0)=\sum_{y \in \mathbb{Z}^k}
(-1)^{|y|+\mathrm{inv}(\nu)}
t^{\sum_{j=1}^k \left( \frac{1}{2}Ky_j^2 + (j-\frac{1}{2})y_j \right)}
\oo_{\lambda(\nu)}(X)\,,
\]
where $K=2k+1$, $\nu_a = |Ky_a+a|-[y_a\ge0]$ for $a=1,\dots,k$ and $\lambda(\nu)$ as in Definition~\ref{de:lambda(nu)}.

\begin{proof}[Proof of \cref{thm-type-b-0-char}]
    Setting $s_1=0$ and $s_2=-1$ in the expression of Lemma~\ref{magical manipulation} we get
\begin{multline*}
    P^{\mathrm{B}_n}_{(k^n)}(X;t,0)= 
        \frac{\sqrt{t}^{\binom{n}{2}}}{2}
        \prod_{i=1}^nx_i^{n-1}\prod_{1\leq i<j\leq n}\frac{1}{(1-x_ix_j)(x_i-x_j)}\\\times\det_{1\le i,j\le n}\left(\frac{1}{1-x_i}\left((\sqrt{t})^{j-1}
        x_i^{j-1-k} +
        (\sqrt{t})^{-j+1}x_i^{-j+1-k}\right)\right.\\
        \left.-\frac{x_i}{1-x_i}\left((\sqrt{t})^{j-1}
        x_i^{-j+1+k}+(\sqrt{t})^{-j+1}x_i^{j-1+k}\right)\right)\\
        = \frac{\sqrt{t}^{\binom{n}{2}}}{2}\prod_{i=1}^n \frac{x_i^{n-1}}{x_i-1}
        \prod_{1\leq i<j\leq n}\frac{1}{(1-x_ix_j)(x_i-x_j)} 
        \\\times\det_{1\le i,j\le n} \left(
        (\sqrt{t})^{-j+1}\left(
        x_i^{k+(j-1)+1}
        - \x_i^{k+(j-1)}\right) +
        (\sqrt{t})^{j-1}\left(
        x_i^{k-(j-1)+1}
        - \x_i^{k-(j-1)}\right)
        \right).
\end{multline*}
We identify in the prefactor the Weyl denominator of type \ref{eq: Denom type B} up to a factor of $\prod_{i=1}^n x_i^{-1/2}$, which we can distribute over the determinant, and a sign $(-1)^{\binom{n}{2}}$ to obtain
\begin{multline*}
            \frac{(-\sqrt{t})^{\binom{n}{2}}}{2}\\\times 
        \frac{\det\limits_{1\leq i,j\leq n}\left(
        (\sqrt{t})^{-j+1}\left(
        x_i^{k+(j-1)+1/2}
        - \x_i^{k+(j-1)+1/2}\right)
        +
        (\sqrt{t})^{j-1}\left(
        x_i^{k-(j-1)+1/2}
        - \x_i^{k-(j-1)+1/2}\right)
        \right)}{\det\limits_{1\leq i,j\leq n}\left(x_i^{n-j+1/2}-\x_i^{n-j+1/2}\right)}\,.
\end{multline*}
Using the shorthand $\eta_j(m)$ for the expression $x_j^m-\x_j^m$, we observe that in the first column of the determinant, the $i$-th entry is $2\eta_i(k+1/2)$. We pull the $2$ out of the first column by linearity, and cancel it with the $1/2$ in the prefactor above to rewrite our expression as
\begin{multline*}
    P^{\mathrm{B}_n}_{(k^n)}(X;t,0)\\=(-\sqrt{t})^{\binom{n}{2}}
\frac{\det\limits_{\substack{1\le i\le n \\1\le j < n}}\left(
\matrixCols{
\eta_i(k+1/2)
}{
(\sqrt{t})^{-j}\eta_i(k+j+1/2) +
(\sqrt{t})^{j}\eta_i(k-j+1/2)
}\right)}{
\det\limits_{1\le i,j\le n }\left(\eta_i(n-j+1/2)\right)
}\,.
\end{multline*}
We invoke Lemma~\ref{lem: noodle-doodle expansion 2.1a} with $\eta_j(a)=x_j^a-\x_j^a$, $p_\pm(j) = (\sqrt{t})^{\mp j}$ and $K=2k+1$ to obtain
\begin{multline*}
    \det\limits_{\substack{1\le i\le n \\1\le j < n}}\left(
\matrixCols{
\eta_i(k+1/2)
}{
(\sqrt{t})^{-j}\eta_i(k+j+1/2) +
(\sqrt{t})^{j}\eta_i(k-j+1/2)
}\right)\\= \sum_{\substack{(y_0,\ldots,y_{k})\in\Z^{k+1}\\y_k\geq 0}}
(-1)^{|y|+\inv(\vec z_{0,k})}
t^{- \frac{1}{2}
 \binom{n}{2}
  + \sum_{i=0}^k \left( \frac{1}{2}Ky_i^2 + y_i (i+ \frac{1}{2}) \right)}\\\times\det_{\substack{1\le i\le n \\0 \le j\le k+n-1 \\ j \not=z(y_r)-1/2, r=0,\dots,k}}(\matrixCols{\eta_i(k+1/2)}{\eta_i(j+1/2)}),
\end{multline*}
where $z(y_j)=\alpha_j(Ky_j+j+1/2)$ and we denote $\vec{z}_{0,k}\coloneqq(z(y_0), z(y_1), \ldots, z(y_k))$. This is because $$(-1)^{\binom{k+1}{2}
        +\sum_{j=0}^{k}
        (y_j+b_j)}=(-1)^{|y|+\inv(\vec z_{0,k})},$$
as $ \inv(\vec z_{0,k})= |\{(i,j)\mid i>j,~ z(y_i) < z(y_j)\}|=\binom{k+1}{2} - \sum_{j=0}^{k} b_j$
is the number of inversions of $\vec z_{0,k}$ and we know that
\begin{equation*}
    \prod_{j=1}^{n-1}p_+(j) 
        \prod_{i=0}^{k}
        \prod_{\ell=0}^{\alpha_iy_i-1}
        \frac{p_-(K\ell+k+\alpha_i i+[y_i\geq 0])}{p_+(K\ell+k+\alpha_i i+[y_i\geq 0])}\\=t^{- \frac{1}{2}
 \binom{n}{2}
  + \sum_{i=0}^k \left( \frac{1}{2}Ky_i^2 + y_i (i+ \frac{1}{2}) \right)},
\end{equation*}
by the following simple calculation:
\begin{multline*}
-\sum_{j=1}^{n-1}
\frac{j}{2} +
\sum_{i=0}^k
\sum_{\ell=0}^{|y_i|-1}
\left(K\ell + k + \alpha_ii + [y_i\ge0] \right)
\\ =
- \frac{1}{2}\binom{n}{2} + 
 \sum_{i=0}^k
 \left(K\binom{|y_i|}{2} + y_ii + |y_i|\Big( k+[y_i\ge0]\Big)\right)
\\
=  - \frac{1}{2}
 \binom{n}{2}
  + \sum_{i=0}^k \left( \frac{1}{2}Ky_i^2 + y_i (i+{\textstyle \frac{1}{2}}) \right),
\end{multline*}
where the last equality holds summand-by-summand by distinguishing cases on the sign of~$y_i$. Therefore, inserting into our expression from before we have
\begin{multline*}
     P^{\mathrm{B}_n}_{(k^n)}(X;t,0) 
 =     
 \sum_{\substack{(y_0,\ldots,y_{k})\in\Z^{k+1}\\y_k\geq 0}}
(-1)^{\binom{n}{2}+|y|+\inv(\vec z_{0,k})}
t^{\sum_{i=0}^{k} \left( \frac{1}{2}Ky_i^2 + y_i (i+{\frac{1}{2}}) \right)}\\\times
\frac{\det\limits_{\substack{1\le i\le n \\0 \le j\le k+n-1 \\ j \not=z(y_r)-1/2, r=0,\dots,k}}(\matrixCols{\eta_i(k+1/2)}{\eta_i(j+1/2)})}{\det\limits_{1 \le i,j \le n}(\eta_i(n-j+1/2))}\,.
\end{multline*}

Now consider the determinant
$$\det_{\substack{1\le i\le n \\0 \le j\le k+n-1 \\ j \not=z(y_r)-1/2, r=0,\dots,k}}(\matrixCols{\eta_i(k+1/2)}{\eta_i(j+1/2)}),$$
in the numerator. The first column is $\eta_i(k+1/2)$. However, if $z(y_i)\ne k+1/2$ for all $i$, then there is a second column of the same form, and hence the determinant vanishes. Note that $z(y_i) = k+1/2$ if and only if $i=k$ and $y_k\in\{0,-1\}$. Since $y_k\ge0$, we have only one option and may thus assume $y_k=0$ from now on. Note that then $$ \inv (\vec z_{0,k})=\inv((z(y_0),\dots,z(y_{k-1}),k+1/2)=\inv(\vec z_{0,k-1})+|\{j\in[0,k-1]\mid k+1/2<z(y_j)\}|.$$
We reorder the columns of the determinant to be in increasing order with respect to the arguments of $\eta_i$. The only misplaced column is the first. This introduces a sign of $$(-1)^{k-|\{j\in[0,k-1]\mid z(y_j)< k+1/2\}|}=(-1)^{|\{j\in[0,k-1]\mid k+1/2<z(y_j)\}|},$$
as there are $k-|\{j\in[0,k-1]\mid z(y_j)< k+1/2\}|$ many columns after the first column of our matrix where the argument of $\eta_i$ is smaller than $k+1/2$. Thus, we have arrived at
\begin{multline*}
     P^{\mathrm{B}_n}_{(k^n)}(X;t,0) 
 =     
 \sum_{(y_0,\ldots,y_{k-1})\in\Z^{k}}
(-1)^{\binom{n}{2}+|y|+\inv(\vec z_{0,k-1})}
t^{\sum_{i=0}^{k-1} \left( \frac{1}{2}Ky_i^2 + y_i (i+{\frac{1}{2}}) \right)}\\\times
\frac{\det\limits_{\substack{1\le i\le n \\0 \le j\le k+n-1 \\ j \not=z(y_r)-1/2, r=0,\dots,k-1}}(\eta_i(j+1/2))}{\det\limits_{1 \le i,j \le n}(\eta_i(n-j+1/2))}\,.
\end{multline*}

To be able to use the Weyl character formula of type $\mathrm{B}_n$ \eqref{eq: Weyl character formula type B} and Corollary~\ref{cor:useful.complementary}, we define a vector $\nu\coloneqq (\nu_1,\dots, \nu_k)$ by
\[
\nu_i := z(y_{i-1})-\frac{1}{2}=\alpha_{i-1}(Ky_{i-1}+i-1+1/2)-1/2 = |Ky_{i-1}+i| - [y_{i-1}\ge0]
\]
for $i=1,\ldots,k$
and use $\inv(\vec z_{0,k-1}) = \inv(\nu),$
to reindex the sum by simultaneously setting $y_i\rightarrow y_{i+1}$ for $i=0,\dots k-1$ and write it as a sum over tuples $(y_1,\ldots,y_k)\in \Z^k$, to obtain
\[
 P^{\mathrm{B}_n}_{(k^n)}(X;t,0) 
 =     
 \sum_{(y_1,\ldots,y_k)\in\Z^{k}}
(-1)^{\binom{n}{2}+|y|+\inv(\nu)}
t^{\sum_{i=1}^k \left( \frac{1}{2}Ky_i^2 + y_i (i-{\frac{1}{2}}) \right)}
\frac{\det\limits_{\substack{1\le i\le n \\0 \le j\le k+n-1 \\ j \not={\nu}_r, r=1,\dots,k}}(\eta_i(j+1/2))}{\det\limits_{1 \le i,j \le n}(\eta_i(n-j+1/2))}\,.
\]

By the Weyl character formula of type $\mathrm{B}_n$ \eqref{eq: Weyl character formula type B} and Corollary~\ref{cor:useful.complementary}, we know that
\[
\frac{\det\limits_{\substack{1\le i\le n \\0 \le j\le k+n-1 \\ j \not={\nu}_r, r=1,\dots,k}}(\eta_i(j+1/2))}{\det\limits_{1 \le i,j \le n}(\eta_i(n-j+1/2))}= 
(-1)^{\binom{n}{2}}\oo_{\lambda(\nu)}(X).
\]
Note that the condition for the matrix in the numerator to be a square matrix is precisely the condition that the $y_a$'s corresponding $\nu_a$'s are inside $\{0,\dots,n+k-1\}$ and are all different. Plugging this into our expression for $P^{\mathrm{B}_n}_{(k^n)}(X;t,0)$ from above yields
\[
    P^{\mathrm{B}_n}_{(k^n)}(X;t,0)= 
 \sum_{(y_1,\ldots,y_k)\in\Z^{k}}
 (-1)^{|y|+\inv(\nu)}
t^{\sum_{i=1}^k \left( \frac{1}{2}Ky_i^2 + y_i (i-{\frac{1}{2}}) \right)}\oo_{\lambda(\nu)}(X),
\]
where we sum over all $y\in\Z^k$ such that the corresponding $\nu_a$'s are inside $\{0,\dots,n+k-1\}$ and are all different, as claimed. 
\end{proof}
Having finished the proof of \eqref{thm-type-b-0-char}, we are ready to prove \eqref{thm-b-0}.
Recall that the dual Jacobi--Trudi formula for $P^{\mathrm{B}_n}_{(k^n)}(X;t,0)$ stated in \eqref{thm-b-0} is
\[
\sum_{y \in \mathbb{Z}^k}
\det_{1 \le i,j \le k}
\left(
(-1)^{y_i}
t^{\frac{1}{2} K y_i^2 - (j-\frac{1}{2})y_i}
\left(
\dot{e}_{\,n-i+j-K y_i}
+
\dot{e}_{\,n+i+j- Ky_i-1}
\right)
\right)\,
\]
where $K=2k+1$.
\begin{proof}[Proof of ~\cref{thm-b-0}]
Let $\nu_a = |Ky_a + a| - [y_a\ge0]$ as in~\eqref{thm-type-b-0-char}. 
Unravelling the Definition~\ref{de:lambda(nu)}, we have
\begin{multline*}
\lambda(\nu)'_i = (\mathrm{sort}(\nu) - \rho_k)^{\square n^k}_i = 
n - (\mathrm{sort}(\nu) - \rho_k)_{k+1-i} =
n - \mathrm{sort}(\nu)_{k+1-i} + i -1\\
= n - |Ky_{\sigma(i)} + {\sigma(i)}| + [y_{\sigma(i)}\ge0] + i-1\,,
\end{multline*}
where $\sigma$ is the permutation that sorts $\nu$ into increasing order.

We apply the dual Jacobi--Trudi identity of type $\mathrm{B}_n$ \eqref{eq: dual JT type B} to the formula~\ref{thm-type-b-0-char} of Theorem \ref{thm-b-char} to get
\[
P_{(k^n)}^{\mathrm{B}_n}(X;t,0)=\sum_{y \in \mathbb{Z}^k}
(-1)^{|y|+\inv(\nu)}
t^{\sum_{j=1}^k \left( \frac{1}{2}K y_j^2 + (j-\frac{1}{2})y_j \right)}
\det_{1\leq i,j\leq k}(\dot{e}_{\lambda(\nu)'_i-i+j}
+ 
\dot{e}_{\lambda(\nu)'_i-i-j+1}).
\]
By the assumption in Theorem \ref{thm-b-char}, the sum ranges over all vectors $y\in\Z^k$ such that $\nu_1,\dots,\nu_k\in[0,n+k-1]$ are all different elements. By Lemma \ref{lem: handling the summation range} the determinant on the right hand side of this sum vanishes outside of this range of summation, hence we can extend the sum, to a sum over all $y\in\Z^k$. 

Plugging in the expression for $\lambda(\nu)'_i$ that we got before, the determinant is given by
\[
\det_{1\leq i,j\leq k}\left(
\dot{e}_{
n - |Ky_{\sigma(i)} + {\sigma(i)}| + [y_{\sigma(i)}\ge0]-1 + j
} +
\dot{e}_{
n - |Ky_{\sigma(i)} + {\sigma(i)}| + [y_{\sigma(i)}\ge0] - j 
}\right).
\]
Now we rearrange the rows of the matrix by the permutation $\sigma^{-1}$. This cancels the factor $\sgn(\sigma) = (-1)^{\mathrm{inv}(\nu)}$, since $\sigma$ is the permutation that sorts $\nu$ in increasing order and hence the sign is given by the number of inversions of $\nu$.  
The determinant becomes
\[
\det_{1\leq i,j\leq k}\left(
\dot{e}_{
n - |Ky_{i} + {i}| + [y_{i}\ge0] -1+ j
} +
\dot{e}_{
n - |Ky_{i} + {i}| + [y_{i}\ge0] - j 
}\right).
\]
We distinguish cases according to the sign of $y_i$ and fix some $1\leq j\leq k$.
If $y_i$ is non-negative, we use the formula
$\dot{e}_{n+r} = \dot{e}_{n-r}$ to get 
\begin{multline*}
\dot{e}_{
n - |Ky_{i} + {i}| + [y_{i}\ge0] -1+ j
} +
\dot{e}_{
n - |Ky_{i} + {i}| + [y_{i}\ge0] - j 
}\\=
\dot{e}_{
n - Ky_{i} - i + j 
} +
\dot{e}_{
n - Ky_{i} - i - j + 1
}=
\dot{e}_{
n + Ky_{i} + i - j 
} +
\dot{e}_{
n + Ky_{i} + i + j - 1
}.
\end{multline*}
If $y_i$ is negative, we immediately get that
\begin{equation*}
   \dot{e}_{
n - |Ky_{i} + {i}| + [y_{i}\ge0] -1+ j
} +
\dot{e}_{
n - |Ky_{i} + {i}| + [y_{i}\ge0] - j 
}=
\dot{e}_{
n + Ky_{i} + i + j-1
} +
\dot{e}_{
n + Ky_{i} + i - j}.
\end{equation*}
Replacing $y_i$ by $-y_i$ and transposing our matrix, we arrive at
    \[
\sum_{y \in \mathbb{Z}^k}
(-1)^{\sum_{j=1}^k y_j}
t^{\sum_{j=1}^k \left( \frac{1}{2} K y_j^2 - (j-\frac{1}{2})y_j \right)}
\det_{1 \le i,j \le k}
\left(
\dot{e}_{\,n-i+j-K y_j}
+
\dot{e}_{\,n+i+j- Ky_j-1}
\right).\]
By Lemma \ref{lem: handling the summation range}, the support of this sum is finite, so extending the sum to a finite $S_k$ invariant subset containing the support, by Lemma~\ref{lem:powery}, we get that this is equal to the right-hand side of~\eqref{thm-b-0}.
\end{proof}

\subsection{The case \texorpdfstring{$s=-t^{1/2}$}{s=-t^1/2}.}
We continue with the second of three formulae \eqref{thm-type-b-t-char} of Theorem~\ref{thm-b-char}, that is
\[
P_{(k^n)}^{\mathrm{B}_n}(X;t,-t^{1/2})=\sum_{(y_1, \ldots, y_k)\in\Z^k}
(-1)^{\inv(\nu)}
t^{\sum_{j=1}^{k} \left( \frac{1}{2}Ky_j^2 + (j-\frac{1}{2})y_j \right)}
\oo_{\lambda(\nu)}(X),\]
where $K=2k$, $\nu_a = |Ky_a + a| - [y_a\ge0]$ for $a=1,\dots,k$ and $\lambda(\nu)$ as in Definition~\ref{de:lambda(nu)}.
\begin{proof}[Proof of~\cref{thm-type-b-t-char}]
We set $s_1=-\sqrt{t}$ and $s_2=-1$ in the expression of Lemma~\ref{magical manipulation} to obtain
\begin{multline*}
    P_{(k^n)}^{\mathrm{B}_n}(X;t,-t^{1/2})= \frac{\sqrt{t}^{\binom{n}{2}}}{2}
    \prod_{i=1}^nx_i^{n-1}\prod_{1\leq i<j\leq n}\frac{1}{(1-x_ix_j)(x_i-x_j)}\\\times\det_{1\le i,j\le n}\left(\frac{(1+\sqrt{t}x_i)}{1-x_i}
    \left((\sqrt{t})^{j-1}x_i^{j-1-k}+
    (\sqrt{t})^{-j+1}x_i^{-j+1-k}\right)\right.\\\left.-\frac{(\sqrt{t}+x_i)}{1-x_i}
    \left((\sqrt{t})^{j-1}x_i^{-j+1+k}
    +(\sqrt{t})^{-j+1}x_i^{j-1+k}\right)\right)\\=
    \frac{\sqrt{t}^{\binom{n}{2}}}{2}
    \prod_{i=1}^n\frac{x_i^{n-1}}{x_i-1}\prod_{1\leq i<j\leq n}\frac{1}{(1-x_ix_j)(x_i-x_j)}\\\times\det_{1\le i,j\le n}\left(-(1+\sqrt{t}x_i)
    \left((\sqrt{t})^{j-1}x_i^{j-1-k}+
    (\sqrt{t})^{-j+1}x_i^{-j+1-k}\right)\right.\\\left.+(\sqrt{t}+x_i)
    \left((\sqrt{t})^{j-1}x_i^{-j+1+k}
    +(\sqrt{t})^{-j+1}x_i^{j-1+k}\right)\right).
\end{multline*}
Similarly as in the proof of \eqref{thm-type-b-0-char}, we identity the Weyl denominator of type \ref{eq: Denom type B} up to a factor of $\prod_{i=1}^nx_i^{-1/2}$, which we distribute over the determinant, and a sign $(-1)^{\binom{n}{2}}$ to obtain
\begin{multline*}
    \frac{(-\sqrt{t})^{\binom{n}{2}}}{2}
    \det_{1\le i,j\le n}\left(-(1+\sqrt{t}x_i)\left((\sqrt{t})^{j-1}x_i^{j-3/2-k}+(\sqrt{t})^{-j+1}x_i^{-j+1/2-k}\right)\right.\\\left.+
  {( \sqrt{t}+x_i)}
  \left((\sqrt{t})^{j-1}x_i^{-j+1/2+k}+(\sqrt{t})^{-j+1}x_i^{j-3/2+k}\right)\right)/\det_{1\leq i,j\leq n}\left(x_i^{n-j+1/2}-\x_i^{n-j+1/2}\right)
\end{multline*}
Using our shorthand notation $\eta_j(m)$ for the expression $x_j^m-\x_j^m$ again, we can rewrite this expression as
\begin{multline*}
    \frac{(-\sqrt{t})^{\binom{n}{2}}}{2}\det\limits_{1\leq i,j\leq n}\left((\sqrt{t})^{j-1} \eta_i(k-j+3/2)
 +(\sqrt{t})^{j} \eta_i(k-j+1/2) \right.\\\left.
 +(\sqrt{t})^{-j+1} \eta_i(k+j-1/2) +(\sqrt{t})^{-j+2}\eta_i(k+j-3/2)\right)/\det\limits_{1\leq i,j\leq n}(\eta_i(n-j+1/2)),
\end{multline*}
Now observe that the determinant in the numerator equals the following determinant, where we factored out the $2$ from the first column, since the first matrix can be obtained from the latter by applying elementary column operations $C_j \rightarrow C_j+C_{j-1}$ for $j=n,\dots, 2$, 
\begin{multline*}
2 \det\limits_{1\leq i,j\leq n}\left((\sqrt{t})^{-j+1} \eta_i(k+j-1/2)
+(\sqrt{t})^j \eta_i(k-j+1/2)\right)
\\ = 2(\sqrt{t})^{\frac{n}{2}}\cdot
\det\limits_{1\leq i,j\leq n}\left((\sqrt{t})^{-(j-\frac{1}{2})} \eta_i(k+j-1/2)
+
(\sqrt{t})^{j-\frac{1}{2}} \eta_i(k-j+1/2)\right)\,.
\end{multline*}
So our expression is equal to
\begin{multline*}
    P_{(k^n)}^{\mathrm{B}_n}(X;t,-t^{1/2})\\=(-1)^{\binom{n}{2}}(\sqrt{t})^{\binom{n}{2}+\frac{n}{2}}\frac{\det\limits_{1\leq i,j\leq n}\left((\sqrt{t})^{-(j-\frac{1}{2})} \eta_i(k+j-1/2)
+
(\sqrt{t})^{j-\frac{1}{2}} \eta_i(k-j+1/2)\right)}{\det\limits_{1\leq i,j\leq n}(\eta_i(n-j+1/2))}.
\end{multline*}
We use Lemma~\ref{lem: noodle-doodle expansion 2.1b} with $\eta_j(a)=x_j^a-\x_j^a$, $p_{\pm}(j) = (\sqrt{t})^{\mp(j-\frac{1}{2})}$ and $K=2k$ to obtain
\begin{multline*}
    \det\limits_{1\leq i,j\leq n}\left((\sqrt{t})^{-(j-\frac{1}{2})} \eta_i(k+j-1/2)
+
(\sqrt{t})^{j-\frac{1}{2}} \eta_i(k-j+1/2)\right)\\= \sum_{(y_0,\dots,y_{ k-1}) \in\Z^{k}}
        (-1)^{\inv(\vec z_{0,k-1})}t^{-\frac{n^2}{4} + \sum_{i=0}^{k-1} \left( \frac{1}{2}Ky_i^2 + (i+\frac{1}{2})y_i \right)}\\\times
         \det_{\substack{1\le i\le n \\0 \le j\le k+n-1 \\ j \not=z(y_r)-1/2, r=0,\dots,k-1}}({\eta_i(j+1/2)}),
\end{multline*}
where $z(y_j)=\alpha_j(Ky_j+j+1/2)$ and we denote $\vec z_{0,k-1}\coloneqq(z(y_0),\dots,z(y_{k-1}))$. This is because as in the proof of \eqref{thm-type-b-0-char}, we have $\binom{k}{2} - \sum_{j=0}^{k-1} b_j = \inv(\vec z_{0,k-1})$, and furthermore we get that
\begin{equation*}
    \prod_{j=1}^{n}p_+(j) 
        \prod_{i=0}^{k-1}
        \prod_{\ell=0}^{\alpha_iy_i-1}
        \frac{p_-(K\ell+k+\alpha_i i+[y_i\geq 0])}{p_+(K\ell+k+\alpha_i i+[y_i\geq 0])}=t^{-\frac{n^2}{4} + \sum_{i=0}^{k-1} \left( \frac{1}{2}Ky_i^2 + (i+\frac{1}{2})y_i \right)},
\end{equation*}
from the following simple calculation:
\begin{multline*}
-\frac{1}{2}\sum_{j=1}^n \left(j-\frac{1}{2}\right) +\sum_{i=0}^{k-1}
\sum_{\ell=0}^{|y_i|-1}
\left(K\ell+k + \alpha_ii + [y_i\ge0]-\frac{1}{2}\right)
\\
= -\frac{1}{2}\left(\binom{n+1}{2} - \frac{n}{2}\right) + \sum_{i=0}^{k-1}
\left( K\binom{|y_i|}{2} + |y_i|\left(k +\alpha_i i + [y_i\ge0]-\frac{1}{2}\right) \right)
\\
= 
-\frac{n^2}{4} + \sum_{i=0}^{k-1} \left( \frac{1}{2}Ky_i^2 + (i+{\textstyle\frac{1}{2}})y_i \right),  
\end{multline*}
where the last equality follows summand-by-summand by a case distinction on the sign of $y_i$. Inserting this into our expression from before we obtain
\begin{multline*}
    P_{(k^n)}^{\mathrm{B}_n}(X;t,-t^{1/2}) \\=
\sum_{(y_0, \ldots, y_{k-1})\in\Z^k}
(-1)^{\binom{n}{2}+\inv(\vec z_{0,k-1})}
t^{\sum_{i=0}^{k-1} \left( \frac{1}{2}Ky_i^2 + (i+\frac{1}{2})y_i \right)}
\frac{\det\limits_{\substack{1\le i\le n \\0 \le j\le k+n-1 \\ j \not=z(y_r)-1/2, r=0,\dots,k-1}}(\eta_i(j+1/2))}{\det\limits_{1 \le i,j \le n}(\eta_i(n-j+1/2))}   \,.
\end{multline*}

To able to use the Weyl character formula of type $\mathrm{B}_n$ \eqref{eq: Weyl character formula type B} and Corollary~\ref{cor:useful.complementary}, we define a vector $\nu\coloneqq(\nu_1,\dots,\nu_k)$ by
$$\nu_i = z(y_{i-1}) -\frac{1}{2} = \alpha_{i-1}\Big(Ky_{i-1} + i - 1 + \frac{1}{2}\Big)-\frac{1}{2}
= |Ky_{i-1} + i| - [y_{i-1}\ge0],$$
for $i=1,\ldots,k$
and use $\inv(\vec z_{0,k-1}) = \inv(\nu)$
and then simultaneously set $y_i\rightarrow y_{i+1}$ for $i=0,\dots k-1$ to write our expression as
\[
P_{(k^n)}^{\mathrm{B}_n}(X;t,-t^{1/2})=
\sum_{(y_1, \ldots, y_k)\in\Z^k}
(-1)^{\binom{n}{2}+\inv(\nu)}
t^{\sum_{i=1}^{k} \left( \frac{1}{2}Ky_i^2 + (i-\frac{1}{2})y_i \right)}
\frac{\det\limits_{\substack{1\le i\le n \\0 \le j\le k+n-1 \\ j \not=\nu_r, r=1,\dots,k}}(\eta_i(j+1/2))}{\det\limits_{1\le i,j\le n}(\eta_i(n-j+1/2))}.
\]

By the Weyl character formula of type $\mathrm{B}_n$ \eqref{eq: Weyl character formula type B} and Corollary~\ref{cor:useful.complementary} we find that
\[
(-1)^{\binom{n}{2}}
\frac{\det\limits_{\substack{1\le i\le n \\0 \le j\le k+n-1 \\ j \not=\nu_r, r=1,\dots,k}}(\eta_i(j+1/2))}{\det\limits_{1\le i,j\le n}(\eta_i(n-j+1/2))}
= \oo_{\lambda(\nu)}(X).
\]
Note that the condition for the matrix in the numerator to be a square matrix is precisely the condition that the $y_a$'s corresponding to the $\nu_a$'s are inside $\{0,\dots,n+k-1\}$ and are all different.
Plugging this into our expression from above we obtain
\[
P_{(k^n)}^{\mathrm{B}_n}(X;t,-t^{1/2})=
\sum_{(y_1, \ldots, y_k)\in\Z^k}
(-1)^{\inv(\nu)}
t^{\sum_{i=1}^{k} \left( \frac{1}{2}Ky_i^2 + (i-\frac{1}{2})y_i \right)}
\oo_{\lambda(\nu)}(X),
\]
where we sum over all $y\in\Z^k$ such that the corresponding $\nu_a$'s are inside $\{0,\dots,n+k-1\}$ and are all different, as claimed.
\end{proof}

Having finished the proof of \eqref{thm-type-b-t-char} we are ready to prove \eqref{thm-type-b-t}. Recall that the dual Jacobi--Trudi formula for $P^{\mathrm{B}_n}_{(k^n)}(X;t,-t^{1/2})$ stated in \eqref{thm-type-b-t} is
\[
\sum_{y \in \mathbb{Z}^k}
\det_{1 \le i,j \le k}
\left(
t^{\frac{1}{2} K y_i^2 - (j-\frac{1}{2})y_i}
\left(
\dot{e}_{\,n-i+j-K y_i}
+
\dot{e}_{\,n+i+j- Ky_i-1}
\right)
\right)
\]
where $K=2k$.
\begin{proof}[Proof of ~\cref{thm-type-b-t}]
Let $\nu_a = |Ky_a + a | - [y_a\ge0]$ as in~\eqref{thm-type-b-t-char}. 
Unravelling the Definition~\ref{de:lambda(nu)}, we have
\begin{multline*}
\lambda(\nu)'_i = (\mathrm{sort}(\nu) - \rho_k)^{\square n^k}_i = 
n - (\mathrm{sort}(\nu) - \rho_k)_{k+1-i} =
n - \mathrm{sort}(\nu)_{k+1-i} + i -1\\
= n - |Ky_{\sigma(i)} + {\sigma(i)}| + [y_{\sigma(i)}\ge0] + i-1\,,
\end{multline*}
where $\sigma$ is the permutation that sorts $\nu$ into increasing order.

We apply the dual Jacobi--Trudi identity of type $\mathrm{B}_n$ \eqref{eq: dual JT type B} to the formula \ref{thm-type-b-t-char} to get
\[
P_{(k^n)}^{\mathrm{B}_n}(X;t,-t^{1/2})=\sum_{(y_1, \ldots, y_k)\in\Z^k}
(-1)^{\mathrm{inv}(\nu)}t^{\sum_{j=1}^k \left( \frac{1}{2}Ky_j^2 + (j -
\frac{1}{2})y_j \right)
}
\det_{1\leq i,j\leq k}(\dot{e}_{\lambda(\nu)'_i-i+j}
+ 
\dot{e}_{\lambda(\nu)'_i-i-j+1}).
\]
By the assumption in Theorem \ref{thm-b-char}, the sum ranges over all vectors $y\in\Z^k$ such that $\nu_1,\dots,\nu_k\in[0,n+k-1]$ are all different elements. By Lemma \ref{lem: handling the summation range} the determinant on the right hand side of this sum vanishes outside of this range of summation, hence we can extend the sum, to a sum over all $y\in\Z^k$. 

Plugging in the expression for $\lambda(\nu)'_i$ that we got before, this determinant is
\[
\det_{1\leq i,j\leq k}\left(
\dot{e}_{
n - |Ky_{\sigma(i)} + {\sigma(i)} | + [y_{\sigma(i)}\ge0]-1 + j
} +
\dot{e}_{
n - |Ky_{\sigma(i)} + {\sigma(i)}| + [y_{\sigma(i)}\ge0] - j 
}\right).
\]
Now we rearrange the rows of the matrix by the permutation $\sigma^{-1}$. This cancels the factor $\sgn(\sigma) = (-1)^{\mathrm{inv}(\nu)}$, as $\sigma$ is the permutation that sorts $\nu$ in increasing order, hence the sign is given by the number of inversions of $\nu$. The matrix becomes
\[
\det_{1\leq i,j\leq k}\left(
\dot{e}_{
n - |Ky_{i} + {i}| + [y_{i}\ge0] -1+ j
} +
\dot{e}_{
n - |Ky_{i} + {i}| + [y_{i}\ge0] - j 
}\right).
\]
We distinguish cases according to the sign of $y_i$ and fix some $1\leq j\leq k$.
If $y_i$ is non-negative, we use the formula
$\dot{e}_{n+r} = \dot{e}_{n-r}$ to get 
\begin{multline*}
\dot{e}_{
n - |Ky_{i} + {i}| + [y_{i}\ge0] -1+ j
} +
\dot{e}_{
n - |Ky_{i} + {i}| + [y_{i}\ge0] - j 
}\\=\dot{e}_{
n - Ky_{i} - i + j 
} +
\dot{e}_{
n - Ky_{i} - i - j + 1
}=\dot{e}_{
n + Ky_{i} + i - j 
} +
\dot{e}_{
n + Ky_{i} + i + j - 1
}
\end{multline*}
If $y_i$ is negative, we immediately have that
\begin{equation*}
\dot{e}_{
n - |Ky_{i} + {i}| + [y_{i}\ge0] -1+ j
} +
\dot{e}_{
n - |Ky_{i} + {i}| + [y_{i}\ge0] - j 
}=\dot{e}_{
n + Ky_{i} + i + j-1
} +
\dot{e}_{
n + Ky_{i} + i - j \,.
}
\end{equation*}
Replacing $y_i$ by $-y_i$ and transposing our matrix,
we have arrived at
    \[\sum_{(y_1, \ldots, y_k)\in\Z^k}
t^{\sum_{j=1}^k \left( \frac{1}{2} K y_j^2 - (j-\frac{1}{2})y_j \right)}
\det_{1 \le i,j \le k}
\left(
\dot{e}_{\,n-i+j-K y_j}
+
\dot{e}_{\,n+i+j-Ky_j-1}
\right),\]
where we set $K=2k$. By Lemma \ref{lem: handling the summation range}, the support of this sum is finite, so extending the sum to a finite $S_k$ invariant subset containing the support, by Lemma~\ref{lem:powery}, we get that this is equal to the right-hand side of~\eqref{thm-type-b-t}.
\end{proof}

\subsection{The case \texorpdfstring{$s=1$}{s=1}.}
We conclude the section related to type $\mathrm{B}_n$ with the third of of three formulae \eqref{thm-type-b-1-char} of Theorem~\ref{thm-b-char}, that is
\[
P_{(k^n)}^{\mathrm{B}_n}(X;t,1)=\sum_{y \in \mathbb{Z}^k}
(-1)^{\inv(\nu)+|y|}
t^{\sum_{j=1}^k \left( \frac{1}{2}K y_j^2 + (j-1)y_j \right)}
\oe_{\lambda(\nu)}(X),\]
where $K=2k$ and $\nu_a = |Ky_a + a - 1|$ for $a=1,\dots,k$ and $\lambda(\nu)$ is defined in Definition~\ref{de:lambda(nu)}. 
\begin{proof}[Proof of ~\cref{thm-type-b-1-char}] 
Set $s_1=1$ and $s_2=-1$ in the expression of Lemma~\ref{magical manipulation} to get
    \begin{multline*}
    P_{(k^n)}^{\mathrm{B}_n}(X;t,1)= \frac{\sqrt{t}^{\binom{n}{2}}}{2}
    \prod_{i=1}^nx_i^{n-1}\prod_{1\leq i<j\leq n}\frac{1}{(1-x_ix_j)(x_i-x_j)}\\\times\det_{1\le i,j\le n}\left(
    (\sqrt{t})^{j-1}\left(x_i^{j-1-k}+x_i^{-j+1+k}\right)+
    (\sqrt{t})^{-j+1}\left(x_i^{-j+1-k}+x_i^{j-1+k}\right)\right).
\end{multline*}
We identify the Weyl denominator of type \ref{eq: Denom type D} up to a sign $(-1)^{\binom{n}{2}}$ to write this as
\begin{multline*}
    (-\sqrt{t})^{\binom{n}{2}}\det_{1\le i,j\le n}\left(
    (\sqrt{t})^{j-1}\left(x_i^{j-1-k}+x_i^{-j+1+k}\right)+
    (\sqrt{t})^{-j+1}\left(x_i^{-j+1-k}+x_i^{j-1+k}\right)\right)\\\times 1/\det_{1\leq i,j\leq n}\left(x_i^{n-j}+\x^{n-j}\right).
\end{multline*}
Using the shorthand notation $\zeta_i(m)$ for the expression $x_i^{m} + \x_i^{m}$, we observe that the first column of our matrix in the numerator is given by $2\zeta_i(k)$. Factoring out this $2$ by linearity, we can rewrite our expression as
\begin{equation*}
    P_{(k^n)}^{\mathrm{B}_n}(X;t,1)=2(-\sqrt{t})^{\binom{n}{2}}
  \frac{
  \det\limits_{\substack{1 \le i \le n\\ 1 \le j < n}}\left(\matrixCols{
  \zeta_i(k)
  }{
  (\sqrt{t})^{j}\zeta_i(k-j) +(\sqrt{t})^{-j}\zeta_i(k+j)
  }
  \right)
  }{\det\limits_{1 \le i,j \le n}(\zeta_i(n-j))}.
\end{equation*}

We use Lemma~\ref{lem: noodle-doodle expansion 2.1c} with $\zeta_i(a)=x_i^{a} + \x_i^{a}$, $p_{\pm}(j) = (\sqrt{t})^{\mp j}$ and $K=2k$ to obtain 
\begin{multline*}
    \det\limits_{\substack{1 \le i \le n\\ 1 \le j < n}}\left(\matrixCols{
  \zeta_i(k)
  }{
  (\sqrt{t})^{j}\zeta_i(k-j) +(\sqrt{t})^{-j}\zeta_i(k+j)
  }
  \right)\\=\sum_{\substack{(y_0,\dots,y_{ k}) \in\Z^{k +1} \\ y_0,y_k \geq 0}}
        (-1)^{|y|+\inv(\vec z_{0,k})} t^{-\frac{1}{2}\binom{n}{2} +\sum_{i=0}^{k} \left( \frac{1}{2}Ky_i^2 + y_ii \right)}
    \det_{\substack{1\le i\le n \\0 \le j\le k+n-1 \\ j \not=z(y_r), r=0,\dots,k}}(\matrixCols{\zeta_i(k)}{\zeta_i(j)}),
\end{multline*}
where $z(y_j)=\alpha_j(Ky_j+j)$ and where we denote $\vec z_{0,k}\coloneqq(z(y_0),\dots z(y_{k}))$. This is because similarly as in the previous proofs in this section we have $\binom{k+1}{2} - \sum_{j=0}^{k} b_j = \inv(\vec z_{0,k})$, and 
\begin{equation*}
     \prod_{j=1}^{n-1}p_+(j) 
        \prod_{i=0}^{k}
        \prod_{\ell=0}^{\alpha_iy_i-1}
        \frac{p_-(K\ell+k+\alpha_i i)}{p_+(K\ell+k+\alpha_i i)}=t^{-\frac{1}{2}\binom{n}{2} +\sum_{i=0}^{k} \left( \frac{1}{2}Ky_i^2 + y_ii \right)},
\end{equation*}
as the following simple calculations shows:
\begin{multline*}
-\sum_{j=1}^{n-1}\frac{j}{2}+\sum_{i=0}^k\sum_{\ell=0}^{\alpha_iy_i-1}\left(K\ell+k+\alpha_i i\right)=-\frac{1}{2}\binom{n}{2} + \sum_{i=0}^{k} K\binom{|y_i|}{2} + |y_i|k +y_ii \\= 
-\frac{1}{2}\binom{n}{2} +\sum_{i=0}^{k} \left( \frac{1}{2}Ky_i^2 + y_ii \right).
\end{multline*}
Inserting this into our expression from before yields that
\begin{equation*}
    P_{(k^n)}^{\mathrm{B}_n}(X;t,1)=2\sum_{\substack{(y_0,\dots,y_{ k}) \in\Z^{k +1} \\ y_0,y_k \geq 0}}
        (-1)^{\binom{n}{2}+\inv(\vec z_{0,k})+|y|} t^{\sum_{i=0}^{k} \left( \frac{1}{2}Ky_i^2 + y_ii \right)}
    \frac{\det\limits_{\substack{1\le i\le n \\0 \le j\le k+n-1 \\ j \not=z(y_r), r=0,\dots,k}}(\matrixCols{\zeta_i(k)}{\zeta_i(j)})}{\det\limits_{1 \le i,j \le n}(\zeta_i(n-j))}.
\end{equation*}

Now consider the determinant $$\det\limits_{\substack{1\le i\le n \\0 \le j\le k+n-1 \\ j \not=z(y_r), r=0,\dots,k}}(\matrixCols{\zeta_i(k)}{\zeta_i(j)}),$$
in the numerator. The first column of the determinant is $\zeta_i(k)$. However, if $z(y_i)\ne k$ for all $i$ then there is a second column of the same form, and hence the determinant vanishes. An elementary algebraic manipulation reveals that $z(y_i) = k$ if and only if $i=k$ and $y_k\in\{0,-1\}$. Since $y_k\ge0$, we have only one option and may assume, from now on, that $y_k=0$. Note that then
\begin{equation*}
    \inv(\vec z_{0,k})=\inv((z(y_0),\dots,z(y_{k-1}),k)=\inv(\vec z_{0,k-1})+|\{j\in[0,k-1]\mid k<z(y_j)\}|.
\end{equation*}
Now reorder the columns of the determinant to be in increasing order with respect to the argument of $\zeta_i$. The only misplaced column is the first. This introduces a sign of $$(-1)^{k-|\{j\in[0,k-1]\mid z(y_j)< k\}|}=(-1)^{|\{j\in[0,k-1]\mid k<z(y_j)\}|},$$
as there are $k-|\{j\in[0,k-1]\mid z(y_j)< k\}|$ many columns after the first column of our matrix where the argument of $\zeta_i$ is smaller than $k$. Thus we have arrived at
\begin{equation*}
    P_{(k^n)}^{\mathrm{B}_n}(X;t,1)=2\sum_{\substack{(y_0,\dots,y_{k-1}) \in\Z^{k } \\ y_0 \geq 0}}
        (-1)^{\binom{n}{2}+\inv(\vec z_{0,k-1})+|y|} t^{\sum_{i=0}^{k-1} \left( \frac{1}{2}Ky_i^2 + y_ii \right)}
    \frac{\det\limits_{\substack{1\le i\le n \\0 \le j\le k+n-1 \\ j \not=z(y_r), r=0,\dots,k-1}}(\zeta_i(j))}{\det\limits_{1 \le i,j \le n}(\zeta_i(n-j))}.
\end{equation*}

To be able to use the Weyl character formula of type $\mathrm{D}_n$ \eqref{eq: Weyl character formula type D} and Corollary~\ref{cor:useful.complementary}, we define a vector $\nu\coloneqq(\nu_1\dots,\nu_k)$ by
\[
\nu_i := z(y_{i-1})=\alpha_{i-1}(Ky_{i-1}+i-1) = |Ky_{i-1}+i-1|
\]
for $i=1,\ldots,k$
and use $\inv(\vec z_{0,k-1}) = \inv(\nu)$
to reindex the sum by simultaneously setting $y_i\rightarrow y_{i+1}$ for $i=0,\dots k-1$ and write it as a sum over tuples $(y_1,\ldots,y_k)\in \Z^k$. We obtain
\begin{equation*}
    P_{(k^n)}^{\mathrm{B}_n}(X;t,1)
  =  
  2\sum_{\substack{(y_1, \ldots, y_k)\in\Z^k\\ y_1\ge0}}
    (-1)^{\binom{n}{2} + \inv(\nu) + |y|}
    t^{\sum_{i=1}^{k} \left( \frac{1}{2}Ky_i^2 + y_i(i-1) \right)}
  \frac{\det\limits_{\substack{1\le i\le n \\0 \le j\le k+n-1 \\ j \not=\nu_r, r=1,\dots,k}}(\zeta_i(j))
  }{\det\limits_{1 \le i, j \le n}(\zeta_i(n-j))}.
\end{equation*}
By the Weyl character formula of type $\mathrm{D}_n$ \eqref{eq: Weyl character formula type D} and by Corollary~\ref{cor:useful.complementary}, we see that
$$(-1)^{\binom{n}{2}}\frac{\det\limits_{\substack{1\le i\le n \\0 \le j\le k+n-1 \\ j \not=\nu_r, r=1,\dots,k}}(\zeta_i(j))
  }{\det\limits_{1 \le i, j \le n}(\zeta_i(n-j))}=\frac{(1+[\lambda(\nu)_n=0])}{2}\oe_{\lambda(\nu)}(X).$$
  Note that the condition for the matrix in the numerator to be a square matrix is precisely the condition that the $y_a$'s corresponding $\nu_a$'s are inside $\{0,\dots,n+k-1\}$ and are all different.
Plugging this into our expression from before we obtain
\[
    P_{(k^n)}^{\mathrm{B}_n}(X;t,1)
  =  
  \sum_{\substack{(y_1, \ldots, y_k)\in\Z^k\\ y_1\ge0}}
  (1+[\lambda(\nu)_n=0])
    (-1)^{\inv(\nu) + |y|}
    t^{\sum_{i=1}^{k} \left( \frac{1}{2}Ky_i^2 + y_i(i-1) \right)}
  \oe_{\lambda(\nu)}(X),
\]
where we sum over all $y\in\Z^k$ such that the corresponding $\nu_a$'s are inside $\{0,\dots,n+k-1\}$ and are all different. 

To finish the proof, we need to make use of a symmetry property of our summands. We first claim that $\lambda(\nu)_n=0$ if $y_1\neq 0$. By unravelling Lemma~\ref{lem:flip}, we have
\[\lambda(\nu)_n = k - \ell(\mathrm{sort}(\nu)-\rho_k)
  = k - |\{i\in[k]\mid\mathrm{sort}(\nu)_i > k-i\}|.
\]
Now note that $\mathrm{sort}(\nu)$ is a strict partition, because $(\nu_1=z(y_0),\dots,\nu_k=z(y_{k-1}))$ are all located on different connected components, hence are all different. Therefore the only way how $\ell(\mathrm{sort}(\nu)-\rho_k)\neq k$ and subsequently $\lambda(\nu)_n\neq 0$ is when $\nu_1=0$, which, if we assume $y_1\geq 0$, is the case if and only if $y_1=0$. Thus $\lambda(\nu)_n=0$ unless $y_1=0$. 

Denoting $$S(y_1,\ldots,y_k) \coloneqq (-1)^{\inv(\nu) + |y|}
    t^{\sum_{i=1}^{k} \left( \frac{1}{2}Ky_i^2 + y_i(i-1) \right)}
  \oe_{\lambda(\nu)}(X),$$ we get the following symmetry property of our summands:
$$S(-y_1,y_2,\ldots,y_k) = S(y_1,y_2,\ldots,y_k).$$
This follows from the fact that $y_1$, the previous $y_0$, only contributes to $\nu_1=z(y_0)$ and that $z(y_0)=\alpha_0Ky_0=z(-y_0)$, thus $\lambda(\nu)$ is symmetric in $y_1$. The symmetry of the exponent of $t$ follows directly from the fact that the $i=1$ summand in the exponent of $t$ is given by $\frac{1}{2}Ky_1^2$ and clearly symmetric in $y_1$.

Combining these two observations we see that
\begin{align*}
    P_{(k^n)}^{\mathrm{B}_n}(X;t,1)
  &=  
  \sum_{\substack{(y_1, \ldots, y_k)\in\Z^k\\ y_1\ge0}}
  (1+[\lambda(\nu)_n=0]) S(y_1,y_2,\ldots,y_k)\\
  &=  
  \sum_{\substack{(y_1, \ldots, y_k)\in\Z^k\\ y_1\ge0}}
  (1+[y_1\ne 0]) S(y_1,y_2,\ldots,y_k)\\
  &=  
  \sum_{\substack{(y_1, \ldots, y_k)\in\Z^k\\ y_1\ge0}}
  S(y_1,y_2,\ldots,y_k)
+
  \sum_{\substack{(y_1, \ldots, y_k)\in\Z^k\\ y_1 > 0}}
  S(y_1,y_2,\ldots,y_k)\\  &=  
  \sum_{\substack{(y_1, \ldots, y_k)\in\Z^k\\ y_1\ge0}}
  S(y_1,y_2,\ldots,y_k)
+
  \sum_{\substack{(y_1, \ldots, y_k)\in\Z^k\\ y_1 < 0}}
  S(y_1,y_2,\ldots,y_k)\\  &=  
  \sum_{(y_1, \ldots, y_k)\in\Z^k}
  S(y_1,y_2,\ldots,y_k)\\&=\sum_{(y_1, \ldots, y_k)\in\Z^k}(-1)^{\inv(\nu) + |y|}
    t^{\sum_{i=1}^{k} \left( \frac{1}{2}Ky_i^2 + y_i(i-1) \right)}
  \oe_{\lambda(\nu)}(X),
\end{align*}
which is the desired expression.
\end{proof}

Having finished the proof of \eqref{thm-type-b-1-char} we are ready to prove \eqref{thm-type-b-1}.
Recall that the dual Jacobi--Trudi formula for $P^{\mathrm{B}_n}_{(k^n)}(X;t,1)$ stated in \eqref{thm-type-b-1} is
\[
     P_{(k^n)}^{\mathrm{B}_n}(X;t,1)=
     \frac{1}{2}
\sum_{y \in \mathbb{Z}^k}
\det_{1 \le i,j \le k}
\left(
(-1)^{y_i}
t^{\frac{1}{2}K y_i^2 - (j-1)y_i}
(
\dot{e}_{\,n-i+j-K y_i}
+
\dot{e}_{\,n+i+j- Ky_i-2})
\right)
\]
where $K=2k$.
\begin{proof}[Proof of ~\cref{thm-type-b-1}] 
Let $\nu_a = |Ky_a + a-1|$ as in~\eqref{thm-type-b-1-char}. 
Unravelling the definitions preceding Lemma~\ref{lem:flip}, we have
\begin{multline*}
\lambda(\nu)'_i = (\mathrm{sort}(\nu) - \rho_k)^{\square n^k}_i = 
n - (\mathrm{sort}(\nu) - \rho_k)_{k+1-i} =
n - \mathrm{sort}(\nu)_{k+1-i} + i -1\\
= n - |Ky_{\sigma(i)} + \sigma(i)-1| + i-1\,,
\end{multline*}
where $\sigma$ is the permutation that sorts $\nu$ into increasing order.

We apply the dual Jacobi--Trudi identity of type $\mathrm{D}_n$ \eqref{eq: dual JT type D} to the formula of ~\eqref{thm-type-b-1-char} to get
\begin{multline*}
P_{(k^n)}^{\mathrm{B}_n}(X;t,1)\\=\frac{1}{2} 
\sum_{{(y_1, \ldots, y_k)\in\Z^k}} 
(-1)^{|y|+\mathrm{inv}(\nu)}
t^{\sum_{j=1}^k \left( \frac{1}{2}K y_j^2 + (j-1)y_j \right)}
\det_{1\leq i,j\leq k}( \dot{e}_{\lambda(\nu)'_i-i+j}
+ 
\dot{e}_{\lambda(\nu)'_i-i-j+2}).
\end{multline*}
By the assumption in Theorem \ref{thm-b-char}, the sum ranges over all vectors $y\in\Z^k$ such that $\nu_1,\dots,\nu_k\in[0,n+k-1]$ are all different elements. By Lemma \ref{lem: handling the summation range} the determinant on the right hand side of this sum vanishes outside of this range of summation, hence we can extend the sum, to a sum over all $y\in\Z^k$. 

Plugging in the expression for $\lambda(\nu)'_i$ that we got before, this determinant is
\[
\det_{1\leq i,j\leq k}\left(
\dot{e}_{
n - |Ky_{\sigma(i)} + \sigma(i) - 1|  + j - 1
} +
\dot{e}_{
n - |Ky_{\sigma(i)} + \sigma(i) - 1| - j + 1
}\right).
\]
Now we rearrange the rows of the matrix by the permutation $\sigma^{-1}$. This cancels the factor $\sgn(\sigma) = (-1)^{\mathrm{inv}(\nu)}$, as $\sigma$ is the permutation that sorts $\nu$ in increasing order, hence the sign is given by the number of inversions of $\nu$. The matrix becomes
\[
\det_{1\leq i,j\leq k}\left(
\dot{e}_{
n - |Ky_{i} + i - 1| + j - 1
} +
\dot{e}_{
n - |Ky_{i} + i - 1| - j + 1
}\right).
\]
We distinguish cases according to the sign of $y_i$ and fix some $1\leq i,j\leq k$.
If $y_i$ is non-negative, we use the formula
$\dot{e}_{n+r} = \dot{e}_{n-r}$ to get 
\begin{multline*}
\dot{e}_{
n - |Ky_{i} + i - 1| + j - 1
} +
\dot{e}_{
n - |Ky_{i} + i - 1| - j + 1
}\\ = 
\dot{e}_{
n - Ky_{i} - i + j 
} +
\dot{e}_{
n - Ky_{i} - i - j + 2
} =
\dot{e}_{
n + Ky_{i} + i - j 
} +
\dot{e}_{
n + Ky_{i} + i + j - 2
}\,.
\end{multline*}
If $y_i$ is negative, we immediately obtain
\begin{equation*}
\dot{e}_{
n - |Ky_{i} + i - 1| + j - 1
} +
\dot{e}_{
n - |Ky_{i} + i - 1| - j + 1
} = 
\dot{e}_{
n + Ky_{i} + i + j - 2
} +
\dot{e}_{
n + Ky_{i} + i - j
}.
\end{equation*}
Replacing $y_i$ by $-y_i$, and transposing the matrix
we have arrived at
    \[
    \frac{1}{2}
\sum_{{(y_1, \ldots, y_k)\in\Z^k}}
(-1)^{|y|}
t^{\sum_{j=1}^k \left( \frac{1}{2}K y_j^2 - (j-1)y_j \right)}
\det_{1 \le i,j \le k}
\left(
\dot{e}_{\,n-i+j-K y_j}
+
\dot{e}_{\,n+i+j- Ky_j-2}
\right).\]
By Lemma \ref{lem: handling the summation range}, the support of this sum is finite, so extending the sum to a finite $S_k$ invariant subset containing the support, by Lemma~\ref{lem:powery}, we get that this is equal to the right-hand side of~\eqref{thm-type-b-1}.
\end{proof}

\section{Type \texorpdfstring{$\mathrm{C}_n$}{Cn}}
\label{sec:type-c}
In this section, we provide proof of all four formulae related to type $\mathrm{C}_n$ presented in Theorem \ref{main-type-c} and Corollary \ref{cor-type-c}. We present the proofs of the formulae grouped by the parameter specialisations of $t,s$ in the $\mathrm{C}_n$ Hall--Littlewood polynomials. 

\subsection{The case \texorpdfstring{$s=0$}{s=0}.}
We begin with the first of the two formulae \eqref{thm-type-c-0-char} presented in Theorem \ref{main-type-c}, that is
\[
P_{(k^n)}^{\mathrm{C}_n}(X;t,0)=
\sum_{y \in \mathbb{Z}^{k}}
(-1)^{\mathrm{inv}(\nu)+\sum_{j=1}^k [y_j<0]}
t^{\sum_{j=1}^k \left(\frac{1}{2}Ky_j^2 + jy_j \right)}
\sp_{\lambda(\nu)}(X),\]
where $K=2k+2$ and $\nu_a = |Ky_a + a|-1$ for $a=1,\dots,k$ and $\lambda(\nu)$ is defined in Definition~\ref{de:lambda(nu)}. 
\begin{proof}[Proof of ~\cref{thm-type-c-0-char}] 
Setting $s_1=0$ and $s_2=0$ in the expression of Lemma~\ref{magical manipulation} we get 

\begin{multline*}
    P_{(k^n)}^{\mathrm{C}_n}(X;t,0)= \frac{\sqrt{t}^{\binom{n}{2}}}{2}
    \prod_{i=1}^nx_i^{n-1}\prod_{1\leq i<j\leq n}\frac{1}{(1-x_ix_j)(x_i-x_j)}\\\times\det_{1\le i,j\le n}\left(\frac{1}{1-x_i^2}
    \left((\sqrt{t})^{j-1}x_i^{j-1-k}+
    (\sqrt{t})^{-j+1}x_i^{-j+1-k}\right)\right.\\\left.-\frac{x_i^2}{1-x_i^2}
    \left((\sqrt{t})^{j-1}x_i^{-j+1+k}
    +(\sqrt{t})^{-j+1}x_i^{j-1+k}\right)\right)\\=
    \frac{\sqrt{t}^{\binom{n}{2}}}{2}
    \prod_{i=1}^n\frac{x_i^{n-1}}{x_i^2-1}\prod_{1\leq i<j\leq n}\frac{1}{(1-x_ix_j)(x_i-x_j)}\\\times\det_{1\le i,j\le n}
    \left((\sqrt{t})^{j-1}\left(x_i^{-j+3+k}-x_i^{j-1-k}\right)+
    (\sqrt{t})^{-j+1}\left(x_i^{j+1+k}-x_i^{-j+1-k}\right)\right).
\end{multline*}
 We identify the Weyl denominator of type \ref{eq: Denom type C} up to a factor of $\prod_{i=1}^nx_i^{-1}$, which we distribute over the determinant, and a sign $(-1)^{\binom{n}{2}}$, to write our expression as
 \begin{multline*}
     \frac{(-\sqrt{t})^{\binom{n}{2}}}{2}
    \det_{1\le i,j\le n}
    \left((\sqrt{t})^{j-1}\left(x_i^{-j+2+k}-x_i^{j-2-k}\right)+
    (\sqrt{t})^{-j+1}\left(x_i^{j+k}-x_i^{-j-k}\right)\right)\\\times 1/\det_{1\leq i,j\leq n}\left(x_i^{n-j+1}-\x^{n-j+1}\right).
 \end{multline*}
Using our shorthand notation $\eta_i(m)$ for the expression $ x_i^{m} - \x_i^{m}$ from before, we observe that the first column is given by $2\eta_i(k+1)$. Factoring out this $2$ from the first column by the multilinearity of the determinant, we can write the above as
 \[
 P_{(k^n)}^{\mathrm{C}_n}(X;t,0)
  =  
  (-\sqrt{t})^{\binom{n}{2}}
  \frac{\det\limits_{\substack{1 \le i \le n\\ 1 \le j < n}}(
  \matrixCols{\eta_i(k+1)}{(\sqrt{t})^{-j}\eta_i(k+1+j) +(\sqrt{t})^{j}\eta_i(k+1-j)}
  )}{\det\limits_{1 \le i, j \le n}(\eta_i(n-j+1))}\,.
 \]
We invoke Lemma~\ref{lem: noodle-doodle expansion 2.3a} with $\eta_i(a)=x_i^{a} - \x_i^{a}$, $p_{\pm}(j) = (\sqrt{t})^{\mp j}$ and $K=2k+2$ to obtain that
\begin{multline*}
    \det\limits_{\substack{1 \le i \le n\\ 1 \le j < n}}(
  \matrixCols{\eta_i(k+1)}{(\sqrt{t})^{-j}\eta_i(k+1+j) +(\sqrt{t})^{j}\eta_i(k+1-j)}
  )\\=\sum_{\substack{(y_0,\dots,y_{k+1}) \in\Z^{k +2} \\ y_0,y_{k+1} \geq 0}}
        (-1)^ {\sum_{i=0}^{k+1} [y_i<0] + \inv(\vec z_{0,k+1})}t^{-\frac{1}{2}\binom{n}{2} +\sum_{i=0}^{k+1} \left( \frac{1}{2}Ky_i^2 + y_ii \right)}
        \\\times \det_{\substack{1\le i\le n \\0 \le j\le k+n \\ j \not=z(y_r), r=0,\dots,k+1}}(\matrixCols{\eta_i(k+1)}{\eta_i(j)}),
\end{multline*}
where $z(y_j)=\alpha_j(Ky_j+j)$ and we denote $\vec z_{0,k+1}\coloneqq(z(y_0),\dots,z(y_{k+1}))$. This is because as in the proofs of the previous section we have $\binom{k+2}{2} - \sum_{j=0}^{k+1} b_j = \inv(\vec z_{0,k+1})$ and furthermore
\begin{equation*}
    \prod_{j=1}^{n-1}p_+(j) 
        \prod_{i=0}^{k+1}
        \prod_{\ell=0}^{\alpha_iy_i-1}
        \frac{p_-(K\ell+k+1+\alpha_i i)}{p_+(K\ell+k+1+\alpha_i i)}=t^{-\frac{1}{2}\binom{n}{2} +\sum_{i=0}^{k+1} \left( \frac{1}{2}Ky_i^2 + y_ii \right)},
\end{equation*}
as the following simple calculation shows
\begin{multline*}
    -\sum_{j=1}^{n-1}\frac{j}{2}+\sum_{i=0}^{k+1}\sum_{\ell=0}^{\vert y_i\vert-1}(K\ell+k+1+\alpha_i i)=-\frac{1}{2}\binom{n}{2} + \sum_{i=0}^{k+1} \left(K\binom{|y_i|}{2} + |y_i|(k+1) +y_ii\right) \\= 
-\frac{1}{2}\binom{n}{2} +\sum_{i=0}^{k+1} \left( \frac{1}{2}Ky_i^2 + y_ii \right).
\end{multline*} Inserting this into our expression from before we get that
\begin{multline*}
    P_{(k^n)}^{\mathrm{C}_n}(X;t,0)=\sum_{\substack{(y_0,\dots,y_{k+1}) \in\Z^{k +2} \\ y_0,y_{k+1} \geq 0}}
        (-1)^ {\binom{n}{2}+\sum_{i=0}^{k+1} [y_i<0] + \inv(\vec z_{0,k+1})}t^{ \sum_{i=0}^{k+1} \left( \frac{1}{2}Ky_i^2 + y_ii \right)}
        \\\times \frac{\det\limits_{\substack{1\le i\le n \\0 \le j\le k+n \\ j \not=z(y_r), r=0,\dots,k+1}}(\matrixCols{\eta_i(k+1)}{\eta_i(j)})}{\det\limits_{1 \le i, j \le n}(\eta_i(n-j+1))}.
\end{multline*}

Now consider the determinant
$$\det\limits_{\substack{1\le i\le n \\0 \le j\le k+n \\ j \not=z(y_r), r=0,\dots,k+1}}(\matrixCols{\eta_i(k+1)}{\eta_i(j)}),$$
from the numerator and note that if $y_{k+1}\ne 0$, then the determinant has two equal columns and thus vanishes. Therefore, we can assume $y_{k+1}=0$ and so $z(y_{k+1})=k+1$ from now on. If $y_0\ne0$, then the matrix has a column of zeros and thus vanishes, so we can assume $y_0=0$ and $z(y_0)=0$ from now on too. Note that then
\begin{multline*}
    \inv(\vec z_{0,k+1})=\inv(0,z(y_1),\dots,z(y_k),k+1)\\=\inv(\vec z_{1,k})+\vert\{j\in[1,k]\mid z(y_j)<0\}+\vert\{j\in[1,k]\mid k+1<z(y_j)\}\\=\inv(\vec z_{1,k})+\vert\{j\in[1,k]\mid k+1<z(y_j)\},
\end{multline*}
as $z(y_j)\geq 0$ for all $1\leq j\leq k$. We reorder the columns of the matrix to be in increasing order with respect to the argument of $\eta_i$. The only misplaced column is the first one, and this introduces a sign of 
$$(-1)^{k-\vert\{j\in[1,k]\mid z(y_j)<k+1\}\vert}=(-1)^{\vert\{j\in[1,k]\mid k+1<z(y_j)\}},$$
as there are $k-\vert\{j\in[1,k]\mid z(y_j)<k+1\}\vert$ many columns after the first column of our matrix where the argument of $\eta_i$ is smaller than $k+1$. Thus we have arrived at
\begin{equation*}
    P_{(k^n)}^{\mathrm{C}_n}(X;t,0)=\sum_{(y_1,\dots,y_{k}) \in\Z^{k}}
        (-1)^ {\binom{n}{2}+ \inv(\vec z_{1,k})+\sum_{j=1}^k [y_j<0]}t^{ \sum_{i=1}^{k} \left( \frac{1}{2}Ky_i^2 + y_ii \right)} \frac{\det\limits_{\substack{1\le i\le n \\1 \le j\le k+n \\ j \not=z(y_r), r=1,\dots,k}}(\eta_i(j))}{\det\limits_{1 \le i, j \le n}(\eta_i(n-j+1))}.
\end{equation*}

To be able to use the Weyl character formula of type $\mathrm{C}_n$ \eqref{eq: Weyl character formula type C} and Corollary~\ref{cor:useful.complementary}, we define a vector $\nu\coloneqq (\nu_1,\dots,\nu_k)$ by
$$\nu_i\coloneqq z(y_i)-1=\alpha_i(Ky_i+i)-1=\vert Ky_i+i\vert-1$$
and observe that $\inv(\vec z_{1,k})=\inv(\nu)$. This allows us to rewrite our expression as

\[
    P_{(k^n)}^{\mathrm{C}_n}(X;t,0)=\sum_{(y_1,\ldots,y_{k})\in\Z^{k}}
    (-1)^{\binom{n}{2}+ \inv(\nu)+\sum_{j=1}^k [y_j<0] }
    t
    ^{\sum_{i=1}^{k} \left(\frac{1}{2}Ky_i^2 + y_ii \right)}
    \frac{\det\limits_{\substack{1\le i\le n \\1 \le j\le k+n \\ j \not=\nu_r+1, r=1,\dots,k}}(\eta_i(j))}{\det\limits_{1 \le i,j \le n}(\eta_i(n-j+1))}  
\]

By the Weyl character formula of type $\mathrm{C}_n$ \eqref{eq: Weyl character formula type C} and Corollary~\ref{cor:useful.complementary} we have that
$$(-1)^{\binom{n}{2}}\frac{\det\limits_{\substack{1\le i\le n \\1 \le j\le k+n \\ j \not=\nu_r+1, r=1,\dots,k}}(\eta_i(j))}{\det\limits_{1 \le i,j \le n}(\eta_i(n-j+1))}=\sp_{\lambda(\nu)}(X).$$
Note that the condition for the matrix in the numerator to be a square matrix is precisely the condition that the $y_a$'s corresponding $\nu_a$'s are inside $\{0,\dots,n+k-1\}$ and are all different. Inserting into our expression from above yields
\[
P_{(k^n)}^{\mathrm{C}_n}(X;t,0)
= 
    \sum_{(y_1,\ldots,y_{k})\in\Z^{k}}
    (-1)^{ \inv(\nu)+\sum_{j=1}^k [y_j<0]}
    t
    ^{\sum_{i=1}^{k} \left( \frac{1}{2}Ky_i^2 + y_ii \right)}\sp_{\lambda(\nu)}(X), 
\]
where we sum over all $y\in\Z^k$ such that the corresponding $\nu_a$'s are inside $\{0,\dots,n+k-1\}$ and are all different, as claimed.
\end{proof}

Having finished the proof of \eqref{thm-type-c-0-char}, we can now show the dual Jacobi--Trudi formula~\eqref{cor-type-c-0} for this specialisation, 
    \[
P_{(k^n)}^{\mathrm{C}_n}(X;t,0)=\sum_{y \in \mathbb{Z}^k}
\det_{1 \le i,j \le k}
\left(
t^{\frac{1}{2} K y_i^2 - jy_i}
\left(
\dot{e}_{\,n-i+j-K y_i}
-
\dot{e}_{\,n+i+j- Ky_i}
\right)
\right),
\]
where $K = 2k+2$.
\begin{proof}[Proof of ~\cref{cor-type-c-0}]
  Let $\nu_a = |Ky_a + a| - 1$ as in~\eqref{thm-type-c-0-char}. 
Unravelling the definitions preceding Lemma~\ref{lem:flip} we have
\begin{multline*}
\lambda(\nu)'_i = (\mathrm{sort}(\nu) - \rho_k)^{\square n^k}_i = 
n - (\mathrm{sort}(\nu) - \rho_k)_{k+1-i} =
n - \mathrm{sort}(\nu)_{k+1-i} + i -1\\
= n - |Ky_{\sigma(i)} + \sigma(i)| + i\,,
\end{multline*}
where $\sigma$ is the permutation that sorts $\nu$ into increasing order.

We apply the dual Jacobi--Trudi identity of type $\mathrm{C}_n$ \eqref{eq: dual JT type C} to the formula of ~\eqref{thm-type-c-0-char} to get
\[
P_{(k^n)}^{\mathrm{C}_n}(X;t,0)=
\sum_{y \in \mathbb{Z}^k}
(-1)^{\mathrm{inv}(\nu)+\sum_{j=1}^k [y_j<0]}
t^{\sum_{j=1}^k \left( \frac{1}{2}Ky_j^2 + jy_j \right)}
\det_{1\leq i,j\leq k}(\dot{e}_{\lambda(\nu)'_i-i+j}
- 
\dot{e}_{\lambda(\nu)'_i-i-j}).
\]
By the assumption in Theorem \ref{main-type-c}, the sum ranges over all vectors $y\in\Z^k$ such that $\nu_1,\dots,\nu_k\in[0,n+k-1]$ are all different elements. By Lemma \ref{lem: handling the summation range} the determinant on the right hand side of this sum vanishes outside of this range of summation, hence we can extend the sum, to a sum over all $y\in\Z^k$. 

Plugging in the expression for $\lambda(\nu)'_i$, from before this determinant is equal to
\[
\det_{1\leq i,j\leq k}(
\dot{e}_{
n - |Ky_{\sigma(i)} + \sigma(i)|  + j
} -
\dot{e}_{
n - |Ky_{\sigma(i)} + \sigma(i)| - j
}).
\]
Now we rearrange the rows of the matrix by the permutation $\sigma^{-1}$. This cancels the factor $\sgn(\sigma) = (-1)^{\mathrm{inv}(\nu)}$, as $\sigma$ is the permutation that sorts $\nu$ in increasing order, hence the sign is given by the number of inversions of $\nu$. The matrix becomes
\[
\det_{1\leq i,j\leq k}(
\dot{e}_{
n - |Ky_{i} + i| + j 
} -
\dot{e}_{
n - |Ky_{i} + i| - j 
}).
\]
We distinguish cases according to the sign of $y_i$ and fix some $1\leq i,j\leq k$.
If $y_i$ is non-negative, we use the formula $\dot{e}_{n+r}=\dot{e}_{n-r}$  to get
\begin{equation*}\dot{e}_{
n - |Ky_{i} + i| + j 
} -
\dot{e}_{
n - |Ky_{i} + i| - j 
}= 
\dot{e}_{
n - Ky_{i} - i + j
} -
\dot{e}_{
n - Ky_{i} - i - j
} =
\dot{e}_{
n + Ky_{i} + i - j
} -
\dot{e}_{
n + Ky_{i} + i + j 
}\,.
\end{equation*}
If $y_i$ is negative, one directly gets
\begin{equation*}
\dot{e}_{
n - |Ky_{i} + i| + j 
} -
\dot{e}_{
n - |Ky_{i} + i| - j 
}=
-(
\dot{e}_{
n + Ky_{i} + i - j
} -
\dot{e}_{
n + Ky_{i} + i + j 
})\,.
\end{equation*}
Pulling this sign out of the determinant using the multilinearity we obtain an additional factor of $(-1)^{\sum_{j=1}^k [y_j<0]}$. Replacing $y_i$ by $-y_i$ and then transposing the matrix yields
\[
P_{(k^n)}^{\mathrm{C}_n}(X;t,0)=
\sum_{y \in \Z^k}
t^{\sum_{j=1}^k \left( \frac{1}{2}K y_j^2 - jy_j \right)}
\det_{1 \le i,j \le k}
\left(
\dot{e}_{\,n-i+j-K y_j}
-
\dot{e}_{\,n+i+j- Ky_j}
\right).\]
By Lemma \ref{lem: handling the summation range}, the support of this sum is finite, so extending the sum to a finite $S_k$ invariant subset containing the support, by Lemma~\ref{lem:powery}, we get that this is equal to the right-hand side of the sought expression.
\end{proof}

\subsection{The case $s=t$.}
We conclude the section related to type $\mathrm{C}_n$ by proving the second of the two formulae \eqref{thm-type-c-t-char} presented in Theorem \ref{main-type-c}, that is
\[
P_{(k^n)}^{\mathrm{C}_n}(X;t,t)=
\sum_{\substack{y \in \mathbb{Z}^{k}\\ y_k\ge0}}
(-1)^{|y|+\mathrm{inv}(\nu)+ \sum_{j=1}^k [y_j<0]}
t^{\sum_{j=1}^k \left( \frac{1}{2}Ky_j^2 + jy_j \right)}
\sp_{\lambda(\nu)}(X),\]
where $K=2k$ and $\nu_a = |Ky_a + a|-1$ for $a=1,\dots,k$  and $\lambda(\nu)$ is defined in Definition~\ref{de:lambda(nu)}. 
\begin{proof}[Proof of ~\cref{thm-type-c-t-char}] Set $s_1=\sqrt{t}$ and $s_2=-\sqrt{t}$ in the expression of Lemma~\ref{magical manipulation} to get
    \begin{multline*}
    P_{(k^n)}^{\mathrm{C}_n}(X;t,t)= \frac{(\sqrt{t})^{\binom{n}{2}}}{2}
    \prod_{i=1}^nx_i^{n-1}\prod_{1\leq i<j\leq n}\frac{1}{(1-x_ix_j)(x_i-x_j)}\\\times\det_{1\le i,j\le n}\left(\frac{(1-\sqrt{t}x_i)(1+\sqrt{t}x_i)}{1-x_i^2}
    \left((\sqrt{t})^{j-1}x_i^{j-1-k}+
    (\sqrt{t})^{-j+1}x_i^{-j+1-k}\right)\right.\\\left.-\frac{(\sqrt{t}-x_i)(-\sqrt{t}-x_i)}{1-x_i^2}
    \left((\sqrt{t})^{j-1}x_i^{-j+1+k}
    +(\sqrt{t})^{-j+1}x_i^{j-1+k}\right)\right)\\
    =\frac{(\sqrt{t})^{\binom{n}{2}}}{2}
    \prod_{i=1}^n\frac{x_i^{n-1}}{x_i^2-1}\prod_{1\leq i<j\leq n}\frac{1}{(1-x_ix_j)(x_i-x_j)}\\\times\det_{1\le i,j\le n}\left((tx_i^2-1)\left((\sqrt{t})^{j-1}x_i^{j-1-k}+
    (\sqrt{t})^{-j+1}x_i^{-j+1-k}\right)\right.\\\left.+(x_i^2-t)
    \left((\sqrt{t})^{j-1}x_i^{-j+1+k}
    +(\sqrt{t})^{-j+1}x_i^{j-1+k}\right)\right).
\end{multline*}
We identify the Weyl denominator of type \ref{eq: Denom type C} up to a factor of $\prod_{i=1}^nx_i^{-1}$, which we distribute over the determinant, and a sign $(-1)^{\binom{n}{2}}$, to write our expression as
\begin{multline*}
    \frac{(-\sqrt{t})^{\binom{n}{2}}}{2}\det_{1\le i,j\le n}\left((tx_i-\x_i)\left((\sqrt{t})^{j-1}x_i^{j-1-k}+
    (\sqrt{t})^{-j+1}x_i^{-j+1-k}\right)\right.\\\left.+(x_i-t\x_i)
    \left((\sqrt{t})^{j-1}x_i^{-j+1+k}
    +(\sqrt{t})^{-j+1}x_i^{j-1+k}\right)\right)/\det_{1\leq i,j\leq n}\left(x_i^{n-j+1}-\x^{n-j+1}\right).
\end{multline*}
Using our shorthand notation $\eta_j(m)$ for the expression $x_j^m-\x_j^m$ again, we can rewrite this expression as
\begin{multline*}
    \frac{(-\sqrt{t})^{\binom{n}{2}}}{2}\det_{1\le i,j\le n}\left((\sqrt{t})^{-j+1}\eta_i(k+j)
    -(\sqrt{t})^{j+1}\eta_i(k-j)
    \right.\\\left.-(\sqrt{t})^{-(j-2)+1}\eta_i(k+j-2)
    +(\sqrt{t})^{(j-2)+1}\eta_i(k-(j-2))\right)/\det_{1\leq i,j\leq n}\left(\eta_i(n-j+1)\right).
\end{multline*}
Now observe that the determinant in the numerator equals the following determinant, where we factored out the $2$ from the first column, since the first matrix can be obtained from the latter by applying elementary column operations $C_j \rightarrow C_j-C_{j-2}$ for $j=n,\dots, 3$,
\begin{multline*}
    2\det_{1\leq i,j\leq n}\left((\sqrt{t})^{-j+1}\eta_i(k+j)
    -(\sqrt{t})^{j+1}\eta_i(k-j)\right)\\=2(\sqrt{t})^n\det_{1\leq i,j\leq n}\left((\sqrt{t})^{-j}\eta_i(k+j)
    -(\sqrt{t})^{j}\eta_i(k-j)\right).
\end{multline*}
Therefore, our expression is equal to
\begin{equation*}
    P_{(k^n)}^{\mathrm{C}_n}(X;t,t)=(-1)^{\binom{n}{2}}(\sqrt{t})^{\binom{n}{2}+n}\frac{\det\limits_{1\leq i,j\leq n}\left((\sqrt{t})^{-j}\eta_i(k+j)
    -(\sqrt{t})^{j}\eta_i(k-j)\right)}{\det\limits_{1\leq i,j\leq n}\left(\eta_i(n-j+1)\right)}.
\end{equation*}
  We use Lemma~\ref{lem: noodle-doodle expansion 2.3b} with $\eta_j(a)=x_j^a-\x_j^a$, $p_{\pm}(j) = \pm(\sqrt{t})^{\mp j}$ and $K=2k$ to obtain
\begin{multline*}
    \det\limits_{1\leq i,j\leq n}\left((\sqrt{t})^{-j}\eta_i(k+j)
    -(\sqrt{t})^{j}\eta_i(k-j)\right)\\=\sum_{\substack{(y_0,\dots,y_{ k}) \in\Z^{k + 1} \\ y_0,y_k \geq 0}}
        (-1)^ {|y|+\sum_{i=0}^{k} [y_i<0] + \inv(\vec z_{0,k})} 
        t^{-\frac{1}{2}\binom{n+1}{2}+\sum_{i=0}^k\left(\frac{1}{2}Ky_i^2+y_ii\right)}\det_{\substack{1\le i\le n \\0 \le j\le k+n \\ j \not=z(y_r), r=0,\dots,k}}({\eta_i(j)}),
\end{multline*}
where $z(y_j)=\alpha_j(Ky_j+j)$ and we denote $\vec z_{0,k}\coloneqq (z(y_0),\dots,z(y_k))$. This is because as in the previous proofs we have that $\binom{k+1}{2} - \sum_{j=0}^{k} b_j = \inv(\vec z)$ and furthermore we know that 
\begin{equation*}
    \prod_{j=1}^np_+(j) 
        \prod_{i=0}^{k}
        \prod_{\ell=0}^{\alpha_iy_i-1}
        \frac{p_-(K\ell+k+\alpha_i i)}{p_+(K\ell+k+\alpha_i i)}=(-1)^{|y|}t^{-\frac{1}{2}\binom{n+1}{2}+\sum_{i=0}^k\left(\frac{1}{2}Ky_i^2+y_ii\right)},
\end{equation*}
as every $p_-$ contributes a factor of $-1$ and by the following simple calculation:
\begin{multline*}
    -\sum_{j=1}^n\frac{j}{2}+\sum_{i=0}^k\sum_{\ell=0}^{\vert y_i\vert-1}(K\ell+k+\alpha_i i)=-\frac{1}{2}\binom{n+1}{2}+\sum_{i=0}^k\left(K\binom{\vert y_i\vert}{2}+\vert y_i\vert k+y_ii\right)\\=-\frac{1}{2}\binom{n+1}{2}+\sum_{i=0}^k\left(\frac{1}{2}Ky_i^2+y_ii\right).
\end{multline*}
Inserting this into our expression from before we get that
\begin{multline*}
    P_{(k^n)}^{\mathrm{C}_n}(X;t,t)\\=\sum_{\substack{(y_0,\dots,y_{ k}) \in\Z^{k + 1} \\ y_0,y_k \geq 0}}
        (-1)^ {|y|+\binom{n}{2}+\sum_{i=0}^{k} [y_i<0] + \inv(\vec z_{0,k})} 
        t^{\sum_{i=0}^k\left(\frac{1}{2}Ky_i^2+y_ii\right)}\frac{\det\limits_{\substack{1\le i\le n \\0 \le j\le k+n \\ j \not=z(y_r), r=0,\dots,k}}({\eta_i(j)})}{\det\limits_{1\leq i,j\leq n}\left(\eta_i(n-j+1)\right)}.
\end{multline*}
Now consider the determinant. If $y_0\ne0$, then the determinant has a column of zeros and vanishes. Therefore, we may assume $y_0=0$ from now on. Note that then
\begin{equation*}
    \inv(\vec z_{0,k})=\inv(0,z(y_1),\dots,z(y_k))=\inv(\vec z_{1,k})+\vert\{j\in[1,k]\mid z(y_j)<0\}=\inv(\vec z_{1,k}),
\end{equation*}
as $z(y_j)\geq 0$ for all $1\leq j\leq k$. We have arrived at the following expression
\begin{equation*}
    P_{(k^n)}^{\mathrm{C}_n}(X;t,t)
  =  
  \sum_{\substack{(y_1,\ldots,y_k)\in\Z^k\\ y_k\ge0}}
  (-1)^{|y|+\binom{n}{2} + \sum_{i=1}^k[y_i<0] + \inv(\vec z_{1,k})}
  t^{\sum_i \left( \frac{1}{2}Ky_i^2 +y_ii \right)}
  \frac{\det\limits_{\substack{1\le i\le n \\1 \le j\le k+n \\ j \not=z(y_r), r=1,\dots,k}}({\eta_i(j)})}{\det\limits_{1 \le i,j \le n}(\eta_i(n-j+1))}.
\end{equation*}
To be able to use the Weyl character formula of type $\mathrm{C}_n$ \eqref{eq: Weyl character formula type C} and Corollary~\ref{cor:useful.complementary}, we define a vector $\nu\coloneqq (\nu_1,\dots,\nu_k)$ by
$$\nu_i\coloneqq z(y_i)-1=\alpha_i(Ky_i+i)-1=\vert Ky_i+i\vert-1$$
and observe that $\inv(\vec z)=\inv(\nu).$
Therefore, we can rewrite our expression as
\begin{equation*}
    P_{(k^n)}^{\mathrm{C}_n}(X;t,t)
  =  
  \sum_{\substack{(y_1,\ldots,y_k)\in\Z^k\\ y_k\ge0}}
  (-1)^{|y|+\binom{n}{2} + \sum_i[y_i<0] + \inv(\nu)}
  t^{\sum_{i=1}^k \left( \frac{1}{2}Ky_i^2 +y_ii \right)}
  \frac{\det\limits_{\substack{1\le i\le n \\1 \le j\le k+n \\ j \not=\nu_r+1, r=1,\dots,k}}({\eta_i(j)})}{\det\limits_{1 \le i,j \le n}(\eta_i(n-j+1))}.
\end{equation*}
By the Weyl character formula of type $\mathrm{C}_n$ \eqref{eq: Weyl character formula type C} and Corollary~\ref{cor:useful.complementary} we have that
$$(-1)^{\binom{n}{2}}\frac{\det\limits_{\substack{1\le i\le n \\1 \le j\le k+n \\ j \not=\nu_r+1, r=1,\dots,k}}(\eta_i(j))}{\det\limits_{1 \le i,j \le n}(\eta_i(n-j+1))}=\sp_{\lambda(\nu)}(X). $$
Note that the condition for the matrix in the numerator to be a square matrix is precisely the condition that the $y_a$'s corresponding $\nu_a$'s are inside $\{0,\dots,n+k-1\}$ and are all different. Inserting into our expression from above yields
\begin{equation*}
    P_{(k^n)}^{\mathrm{C}_n}(X;t,t)
  =  
  \sum_{\substack{(y_1,\ldots,y_k)\in\Z^k\\ y_k\ge0}}
  (-1)^{|y|+ \sum_i[y_i<0] + \inv(\nu)}
  t^{\sum_i \left( \frac{1}{2}Ky_i^2 +y_ii \right)}
  \sp_{\lambda(\nu)}(X),
\end{equation*}
where we sum over all $y\in\Z^k$ such that the corresponding $\nu_a$'s are inside $\{0,\dots,n+k-1\}$ and are all different, as desired.
\end{proof}

Having finished the proof of \eqref{thm-type-c-t-char} we are ready to prove the corresponding Jacobi--Trudi formula \eqref{cor-type-c-t}, that is
    \[
    P_{(k^n)}^{\mathrm{C}_n}(X;t,t)=
    \sum_{\substack{y \in \mathbb{Z}^k,\\ |y| \text{ even }}}
\det_{1 \le i,j \le k}
\left(
t^{\frac{1}{2} K y_i^2 - jy_i}
\left(
\dot{e}_{\,n-i+j-K y_i}
-
\dot{e}_{\,n+i+j- Ky_i}
\right)
\right),
    \]
    where $K=2k.$

\begin{proof}[Proof of ~\cref{cor-type-c-t}]
Let $\nu_a = |Ky_a + a| - 1$ as in~\eqref{thm-type-c-t-char}. 
Unravelling the definitions preceding Lemma~\ref{lem:flip} we have
\begin{multline*}
\lambda(\nu)'_i = (\mathrm{sort}(\nu) - \rho_k)^{\square n^k}_i = 
n - (\mathrm{sort}(\nu) - \rho_k)_{k+1-i} =
n - \mathrm{sort}(\nu)_{k+1-i} + i -1\\
= n - |Ky_{\sigma(i)} + \sigma(i)| + i\,,
\end{multline*}
where $\sigma$ is the permutation that sorts $\nu$ into increasing order.
Apply the dual Jacobi--Trudi identity of type $\mathrm{C}_n$ \eqref{eq: dual JT type C} to the formula of~\cref{thm-type-c-t-char} to get
\[
P_{(k^n)}^{\mathrm{C}_n}(X;t,t)=
\sum_{\substack{y \in \mathbb{Z}^k\\y_k\geq 0}}
(-1)^{|y|+\mathrm{inv}(\nu) + \sum_{j=1}^k [y_j<0]}
t^{\sum_{j=1}^k \left( \frac{1}{2}Ky_j^2 + jy_j \right)}
\det_{1\leq i,j\leq k}(\dot{e}_{\lambda(\nu)'_i-i+j}
- 
\dot{e}_{\lambda(\nu)'_i-i-j}).
\]
By the assumption in Theorem \ref{main-type-c}, the sum ranges over all vectors $y\in\Z^k$ such that $\nu_1,\dots,\nu_k\in[0,n+k-1]$ are all different elements and $y_k\geq 0$. By Lemma \ref{lem: handling the summation range} the determinant on the right hand side of this sum vanishes outside of this range of summation, hence we can extend the sum, to a sum over all $y\in\Z^k$ with $y_k\geq 0$. 

Plugging in the expression for $\lambda(\nu)'_i$, from before this determinant is equal to
\[
\det_{1\leq i,j\leq k}(
\dot{e}_{
n - |Ky_{\sigma(i)} + \sigma(i)|  + j
} -
\dot{e}_{
n - |Ky_{\sigma(i)} + \sigma(i)| - j
}).
\]
Now rearrange the rows of the matrix by the permutation $\sigma^{-1}$. This cancels the factor $\sgn(\sigma) = (-1)^{\mathrm{inv}(\nu)}$, as $\sigma$ is the permutation that sorts $\nu$ in increasing order, hence the sign is given by the number of inversions of $\nu$. The matrix becomes
\[
\det_{1\leq i,j\leq k}(
\dot{e}_{
n - |Ky_{i} + i| + j 
} -
\dot{e}_{
n - |Ky_{i} + i| - j 
}).
\]
We distinguish cases according to the sign of $y_i$ and fix some $1\leq i,j\leq k$.
If $y_i$ is non-negative, we use the formula $\dot{e}_{n+r}=\dot{e}_{n-r}$  to get
\begin{equation*}\dot{e}_{
n - |Ky_{i} + i| + j 
} -
\dot{e}_{
n - |Ky_{i} + i| - j 
}= 
\dot{e}_{
n - Ky_{i} - i + j
} -
\dot{e}_{
n - Ky_{i} - i - j
} =
\dot{e}_{
n + Ky_{i} + i - j
} -
\dot{e}_{
n + Ky_{i} + i + j 
}\,.
\end{equation*}
If $y_i$ is negative, one directly gets
\begin{equation*}
\dot{e}_{
n - |Ky_{i} + i| + j 
} -
\dot{e}_{
n - |Ky_{i} + i| - j 
}=
-(
\dot{e}_{
n + Ky_{i} + i - j
} -
\dot{e}_{
n + Ky_{i} + i + j 
})\,.
\end{equation*}
Pulling this sign out of the determinant using the multilinearity we obtain an additional factor of $(-1)^{\sum_j [y_j<0]}$. Replacing $y_i$ by $-y_i$ and then transposing the matrix yields
\[
P_{(k^n)}^{\mathrm{C}_n}(X;t,t)=
\sum_{\substack{y \in \Z^k\\y_k\leq 0}}
(-1)^{|y|}t^{\sum_{j=1}^k \left( \frac{1}{2}K y_j^2 - jy_j \right)}
\det_{1 \le i,j \le k}
\left(
\dot{e}_{\,n-i+j-K y_j}
-
\dot{e}_{\,n+i+j- Ky_j}
\right)\]
To conclude the proof using Lemma~\ref{lem:powery}, we need one more symmetry argument. Let us denote one summand of this sum by 
$$S(y_1,\dots,y_k)\coloneqq (-1)^{|y|}t^{\sum_{j=1}^k \left( \frac{1}{2}K y_j^2 - jy_j \right)}
\det_{1 \le i,j \le k}
\left(
\dot{e}_{n-i+j-K y_j}
-
\dot{e}_{n+i+j- Ky_j}
\right).$$ 
By using $\dot{e}_{n+r} = \dot{e}_{n-r}$ in the third equality and recalling that $K=2k$ we get the following symmetry of the $k$-th column of the matrix:
\begin{multline*}
    \left(\dot{e}_{\,n-i+k-K y_k}
-
\dot{e}_{\,n+i+k- Ky_k}\right)|_{y_k=1-y_k}=\dot{e}_{\,n-i+k-K(1- y_k)}
-
\dot{e}_{\,n+i+k- K(1-y_k)}\\=\dot{e}_{\,n-i-k+K y_k}
-
\dot{e}_{\,n+i-k+Ky_k}=\dot{e}_{\,n+i+k-K y_k}
-
\dot{e}_{\,n-i+k-Ky_k}\\=-\left(\dot{e}_{\,n-i+k-K y_k}
-
\dot{e}_{\,n+i+k- Ky_k}\right).
\end{multline*}
Combining this with the observations that
$$\left.\frac{1}{2}Ky_k^2-ky_k\right|_{y_k=(1-y_k)}=k(1-y_k)^2-k(1-y_k)=\frac{1}{2}Ky_k^2-ky_k,$$ 
and 
$$\left.(-1)^{|y|}\right|_{y_k=1-y_k}=(-1)^{\sum_{i=1}^{k-1}y_i+1-y_k}=-(-1)^{|y|}$$
we get that
\[
S(y_1,\dots,y_{k-1},y_k) = S(y_1,\dots,y_{k-1},1-y_k).
\]
Now denote by $R = \{y\in\Z^k\mid\sum_j y_j~\text{even}\}$ and decompose the sum in our expression as follows:
\begin{multline*}
P_{(k^n)}^{\mathrm{C}_n}(X;t,t)=
\sum_{\substack{y\in R\\y_k\le0}} S(y_1,\dots,y_{k-1},y_k)
+
\sum_{\substack{y\in\Z^k\setminus R\\y_k\le0}} S(y_1,\dots,y_{k-1},y_k)\\
=\sum_{\substack{y\in R\\y_k\le0}} S(y_1,\dots,y_{k-1},y_k)
+
\sum_{\substack{y\in\Z^k\setminus R\\y_k\le0}} S(y_1,\dots,y_{k-1},1-y_k)\\=
\sum_{\substack{y\in R\\y_k\le0}} S(y_1,\dots,y_{k-1},y_k)
+
\sum_{\substack{y\in R\\y_k\ge1}} S(y_1,\dots,y_{k-1},y_k)
\\=
\sum_{\substack{y\in\Z^k\\\vert y\vert~\text{even}}}
t^{\sum_{j=1}^k \left( \frac{1}{2}K y_j^2 - jy_j \right)}
\det_{1 \le i,j \le k}
\left(
\dot{e}_{\,n-i+j-K y_j}
-
\dot{e}_{\,n+i+j- Ky_j}
\right),
\end{multline*}
where we can omit the sign $(-1)^{|y|}$ as $|y|$ is always even in the final sum. By Lemma \ref{lem: handling the summation range}, the support of this sum is finite, so extending the sum to a finite $S_k$ invariant subset containing the support and retaining the additional parameter restriction $|y|$ even, by Lemma~\ref{lem:powery}, we get that this yields the desired statement.
\end{proof}

\section{Type \texorpdfstring{$\mathrm{BC}_n$}{BCn}}
\label{sec:type-bc}
We conclude the paper by establishing the two formulae related to type $\mathrm{BC}_n$ presented in Theorem \ref{thm-type-bc-0-char-all} and Corollary \ref{cor-type-bc}. We start with the character expansion from \eqref{thm-type-bc-0-char}, that is
\[
P_{(k^n)}^{\mathrm{BC}_n}(X;t,-\sqrt{t},0)=
\sum_{y \in \mathbb{Z}^{k}}
(-1)^{|y| + \mathrm{inv}(\nu)- \sum_{j=1}^k [y_j<0]}
t^{\sum_{j=1}^k \left( \frac{1}{2}Ky_j^2 + jy_j\right)}
\sp_{\lambda(\nu)}(X),\]
where $K=2k+1$ and $\nu_a = |Ky_a + a|-1$ for $a=1,\dots,k$  and $\lambda(\nu)$ is defined in Definition~\ref{de:lambda(nu)}. 
\begin{proof}[Proof of Equation~\ref{thm-type-bc-0-char}] 
Set $s_1=-\sqrt{t}$ and $s_2=0$ in the expression of Lemma~\ref{magical manipulation} to get
   \begin{multline*}
    P_{(k^n)}^{\mathrm{BC}_n}(X;t,-\sqrt{t},0)= \frac{(\sqrt{t})^{\binom{n}{2}}}{2}
    \prod_{i=1}^nx_i^{n-1}\prod_{1\leq i<j\leq n}\frac{1}{(1-x_ix_j)(x_i-x_j)}\\\times\det_{1\le i,j\le n}\left(\frac{(1+\sqrt{t}x_i)}{1-x_i^2}
    \left((\sqrt{t})^{j-1}x_i^{j-1-k}+
    (\sqrt{t})^{-j+1}x_i^{-j+1-k}\right)\right.\\\left.-\frac{(\sqrt{t}+x_i)x_i}{1-x_i^2}
    \left((\sqrt{t})^{j-1}x_i^{-j+1+k}
    +(\sqrt{t})^{-j+1}x_i^{j-1+k}\right)\right)\\=\frac{(\sqrt{t})^{\binom{n}{2}}}{2}
    \prod_{i=1}^n\frac{x_i^{n-1}}{x_i^2-1}\prod_{1\leq i<j\leq n}\frac{1}{(1-x_ix_j)(x_i-x_j)}\\\times\det_{1\le i,j\le n}\left((\sqrt{t}+x_i)x_i
    \left((\sqrt{t})^{j-1}x_i^{-j+1+k}
    +(\sqrt{t})^{-j+1}x_i^{j-1+k}\right)\right.\\\left.-(1+\sqrt{t}x_i)
    \left((\sqrt{t})^{j-1}x_i^{j-1-k}+
    (\sqrt{t})^{-j+1}x_i^{-j+1-k}\right)\right).
\end{multline*}
We identify the Weyl denominator of type \ref{eq: Denom type C} up to a factor of $\prod_{i=1}^nx_i^{-1}$, which we distribute over the determinant, and a sign $(-1)^{\binom{n}{2}}$, to write our expression as
\begin{multline*}
    \frac{(-\sqrt{t})^{\binom{n}{2}}}{2}\det_{1\le i,j\le n}\left((\sqrt{t}+x_i)
    \left((\sqrt{t})^{j-1}x_i^{-j+1+k}
    +(\sqrt{t})^{-j+1}x_i^{j-1+k}\right)\right.\\\left.-(\sqrt{t}+\x_i)
    \left((\sqrt{t})^{j-1}x_i^{j-1-k}+
    (\sqrt{t})^{-j+1}x_i^{-j+1-k}\right)\right)/\det_{1\leq i,j\leq n}\left(x_i^{n-j+1}-\x_i^{n-j+1}\right).
\end{multline*}
Using our shorthand notation $\eta_j(m)$ for the expression $x_j^m-\x_j^m$ again, we can rewrite this expression as
\begin{multline*}
    \frac{(-\sqrt{t})^{\binom{n}{2}}}{2}\det_{1\le i,j\le n}\left( (\sqrt{t})^{j}\eta_i(k+1/2-(j-1/2))
    \right.\\\left.+(\sqrt{t})^{1-j}\eta_i(k+1/2+j-1/2)
    +(\sqrt{t})^{2-j}\eta_i(k+1/2+j-3/2)
    \right.\\\left.+(\sqrt{t})^{j-1}\eta_i(k+1/2-(j-3/2))\right)/\det_{1\leq i,j\leq n}\left(\eta_i(n-j+1)\right).
\end{multline*}
Now observe that the determinant in the numerator equals the following determinant, where we factored out the $2$ from the first column, since the first matrix can be obtained from the latter by applying elementary column operations $C_j \rightarrow C_j+C_{j-1}$ for $j=n,\dots, 2$,
\begin{multline*}
    2\det_{1\leq i,j\leq n}\left(
(\sqrt{t})^{1-j}\eta_i(k+1/2+j-1/2)
    +(\sqrt{t})^{j}\eta_i(k+1/2-(j-1/2)\right)\\=2(\sqrt{t})^{n/2}\det_{1\leq i,j\leq n}\left(
(\sqrt{t})^{-(j-1/2)}\eta_i(k+1/2+j-1/2)
    +(\sqrt{t})^{j-1/2}\eta_i(k+1/2-(j-1/2)\right).
\end{multline*}
Therefore, our expression is given by
\begin{multline*}
    P_{(k^n)}^{\mathrm{BC}_n}(X;t,-\sqrt{t},0)=(-1)^{\binom{n}{2}}(\sqrt{t})^{\binom{n}{2}+n/2} \\\times \frac{\det_{1\leq i,j\leq n}\left(
(\sqrt{t})^{-(j-1/2)}\eta_i(k+1/2+j-1/2)
 +(\sqrt{t})^{j-1/2}\eta_i(k+1/2-(j-1/2)\right)}{\det_{1\leq i,j\leq n}\left(\eta_i(n-j+1)\right)}.
\end{multline*}
We use Lemma~\ref{lem: noodle-doodle expansion 2.5a} with $\eta_j(a)=x_j^a-\x_j^a$, $p_{\pm}(j) = (\sqrt{t})^{\mp (j-\frac{1}{2})}$ and $K=2k+1$ to obtain
\begin{multline*}
    \det_{1\leq i,j\leq n}\left(
(\sqrt{t})^{-(j-1/2)}\eta_i(k+1/2+j-1/2)
 +(\sqrt{t})^{j-1/2}\eta_i(k+1/2-(j-1/2)\right)\\=\sum_{\substack{(y_0,\dots,y_{ k}) \in\Z^{k+1}\\ y_0\geq 0}}
        (-1)^{|y| + \sum_{j=0}^k [y_j<0] + \inv(\vec z_{0,k})} t^{-\frac{n^2}{4}+\sum_{i=0}^k\left(\frac{1}{2}Ky_i^2+y_ii\right)}
         \det_{\substack{1\le i\le n \\0 \le j\le k+n \\ j \not=z(y_r), r=0,\dots,k}}({\eta_i(j)}),
\end{multline*}
where $z(y_j)=\alpha_j(Ky_j+j)$ and we denote $\vec z_{0,k}\coloneqq(z(y_o),\dots,z(y_k))$. This is because as in the previous proofs we have that $\binom{k+1}{2} - \sum_{j=0}^{k} b_j = \inv(\vec z_{0,k})$ and furthermore 
\begin{equation*}
    \prod_{j=1}^{n}p_+(j) 
        \prod_{i=0}^{k}
        \prod_{\ell=0}^{\alpha_iy_i-1}
        \frac{p_-(K\ell+k+\alpha_i i+1)}{p_+(K\ell+k+\alpha_i i+1)}=t^{-\frac{n^2}{4}+\sum_{i=0}^k\left(\frac{1}{2}Ky_i^2+y_ii\right)},
\end{equation*}
as the following simple calculations shows:
\begin{multline*}
    -\sum_{j=1}^n\left(\frac{j}{2}-\frac{1}{4}\right)+\sum_{i=0}^k\sum_{\ell=0}^{\vert y_i\vert-1}\left(K\ell+k+\alpha_ii+1-1/2\right)\\=-\frac{1}{2}\binom{n+1}{2}+\frac{n}{4}+\sum_{i=0}^k\left(K\binom{\vert y_i\vert }{2}+\left(k+\frac{1}{2}\right)\vert y_i\vert +y_ii\right)=-\frac{n^2}{4}+\sum_{i=0}^k\left(\frac{1}{2}Ky_i^2+y_ii\right).
\end{multline*}
Inserting this into our expression from before we have that
\begin{multline*}
    P_{(k^n)}^{\mathrm{BC}_n}(X;t,-\sqrt{t},0)\\=\sum_{\substack{(y_0,\dots,y_{ k}) \in\Z^{k+1}\\ y_0\geq 0}}
        (-1)^{\binom{n}{2}+|y| + \sum_{j=0}^k [y_j<0] + \inv(\vec z_{0,k})} t^{\sum_{i=0}^k\left(\frac{1}{2}Ky_i^2+y_ii\right)}
         \frac{\det\limits_{\substack{1\le i\le n \\0 \le j\le k+n \\ j \not=z(y_r), r=0,\dots,k}}({\eta_i(j)})}{\det_{1\leq i,j\leq n}\left(\eta_i(n-j+1)\right)}.
\end{multline*}
Now consider the determinant
$$\det\limits_{\substack{1\le i\le n \\0 \le j\le k+n \\ j \not=z(y_r), r=0,\dots,k}}({\eta_i(j)}),$$
from the numerator and note that if $y_0\ne0$, the determinant vanishes as one column is all zeros. Therefore, we may assume $y_0=0$ from now on. Note that then
\begin{equation*}
    \inv(\vec z_{0,k})=\inv(0,z(y_1),\dots,z(y_k))=\inv(\vec z_{1,k})+\vert\{j\in[1,k]\mid z(y_j)<0\}=\inv(\vec z_{1,k}).
\end{equation*}
We have arrived at the following expression
\begin{multline*}
    P_{(k^n)}^{\mathrm{BC}_n}(X;t,-\sqrt{t},0)\\=\sum_{(y_1,\dots,y_{ k}) \in\Z^{k}}(-1)^{\binom{n}{2}+|y| + \sum_{j=1}^k [y_j<0] + \inv(\vec z_{1,k})}t^{\sum_{i=1}^k\left(\frac{1}{2}Ky_i^2+y_ii\right)}\frac{\det\limits_{\substack{1\le i\le n \\1 \le j\le k+n \\ j \not=z(y_r), r=1,\dots,k}}({\eta_i(j)})}{\det_{1\leq i,j\leq n}\left(\eta_i(n-j+1)\right)}.
\end{multline*}
To be able to use the Weyl character formula of type $\mathrm{C}_n$ \eqref{eq: Weyl character formula type C} and Corollary~\ref{cor:useful.complementary}, we define a vector $\nu\coloneqq (\nu_1,\dots,\nu_k)$ by
$$\nu_i\coloneqq z(y_i)-1=\alpha_i(Ky_i+i)-1=\vert Ky_i+i\vert-1$$
and observe that $\inv(\vec z)=\inv(\nu).$
Therefore, we can rewrite our expression as
\begin{multline*}
    P_{(k^n)}^{\mathrm{BC}_n}(X;t,-\sqrt{t},0)\\=\sum_{(y_1,\dots,y_{ k}) \in\Z^{k}}(-1)^{\binom{n}{2}+|y| + \sum_{j=1}^k [y_j<0] + \inv(\nu)}t^{\sum_{i=1}^k\left(\frac{1}{2}Ky_i^2+y_ii\right)}\frac{\det\limits_{\substack{1\le i\le n \\1 \le j\le k+n \\ j \not=\nu_r+1, r=1,\dots,k}}({\eta_i(j)})}{\det_{1\leq i,j\leq n}\left(\eta_i(n-j+1)\right)}.
\end{multline*}
By the Weyl character formula of type $\mathrm{C}_n$ \eqref{eq: Weyl character formula type C} and Corollary~\ref{cor:useful.complementary} we have that
$$(-1)^{\binom{n}{2}}\frac{\det\limits_{\substack{1\le i\le n \\1 \le j\le k+n \\ j \not=\nu_r+1, r=1,\dots,k}}(\eta_i(j))}{\det\limits_{1 \le i,j \le n}(\eta_i(n-j+1))}=\sp_{\lambda(\nu)}(X). $$
Note that the condition for the matrix in the numerator to be a square matrix is precisely the condition that the $y_a$'s corresponding to the $\nu_a$'s are inside $\{0,\dots,n+k-1\}$ and are all different. Inserting into our expression from above yields
\begin{equation*}
    P_{(k^n)}^{\mathrm{BC}_n}(X;t,-\sqrt{t},0)=\sum_{(y_1,\dots,y_{ k}) \in\Z^{k}}(-1)^{|y| + \sum_{j=1}^k [y_j<0] + \inv(\nu)}t^{\sum_{i=1}^k\left(\frac{1}{2}Ky_i^2+y_ii\right)}\sp_{\lambda(\nu)}(X)
\end{equation*}
where we sum over all $y\in\Z^k$ such that the corresponding $\nu_a$'s are inside $\{0,\dots,n+k-1\}$ and are all different, as desired.

\end{proof}

Having established \eqref{thm-type-bc-0-char} we are ready to prove~\eqref{cor-type-bc-0}:
\[       
P_{(k^n)}^{\mathrm{BC}_n}(X;t,-t^{1/2},0)=
    \sum_{y \in \mathbb{Z}^k}
\det_{1 \le i,j \le k} 
\left(
(-1)^{y_i}
t^{\frac{1}{2} K y_i^2 - jy_i}
\left(
\dot{e}_{\,n-i+j-K y_i}
-
\dot{e}_{\,n+i+j- Ky_i}
\right)
\right),
\]
    where $K=2k+1.$

\begin{proof}[Proof of Corollary~\ref{cor-type-bc}]
  Let $\nu_a = |Ky_a + a| - 1$ as in Theorem~\ref{thm-type-bc-0-char-all}. 
Unravelling the definitions preceding Lemma~\ref{lem:flip} we have
\begin{multline*}
\lambda(\nu)'_i = (\mathrm{sort}(\nu) - \rho_k)^{\square n^k}_i = 
n - (\mathrm{sort}(\nu) - \rho_k)_{k+1-i} =
n - \mathrm{sort}(\nu)_{k+1-i} + i -1\\
= n - |Ky_{\sigma(i)} + \sigma(i)| + i\,,
\end{multline*}
where $\sigma$ is the permutation that sorts $\nu$ into increasing order.
Apply the dual Jacobi--Trudi identity of type $\mathrm{C}_n$ \eqref{eq: dual JT type C} to the formula of~\cref{thm-type-bc-0-char} to get
\begin{multline*}
    P_{(k^n)}^{\mathrm{BC}_n}(X;t,-\sqrt{t},0)\\=\sum_{y \in\Z^{k}}(-1)^{|y| + \sum_{j=1}^k [y_j<0] + \inv(\nu)}t^{\sum_{j=1}^k\left(\frac{1}{2}Ky_j^2+y_jj\right)}\det_{1\leq i,j\leq k}(\dot{e}_{\lambda(\nu)'_i-i+j}
- 
\dot{e}_{\lambda(\nu)'_i-i-j})
\end{multline*}
By the assumption in Theorem \ref{thm-type-bc-0-char-all}, the sum ranges over all vectors $y\in\Z^k$ such that $\nu_1,\dots,\nu_k\in[0,n+k-1]$ are all different elements. By Lemma \ref{lem: handling the summation range} the determinant on the right hand side of this sum vanishes outside of this range of summation, hence we can extend the sum, to a sum over all $y\in\Z^k$. 

Plugging in the expression for $\lambda(\nu)'_i$, from before this determinant is equal to
\[
\det_{1\leq i,j\leq k}(
\dot{e}_{
n - |Ky_{\sigma(i)} + \sigma(i)|  + j
} -
\dot{e}_{
n - |Ky_{\sigma(i)} + \sigma(i)| - j
}).
\]
Now rearrange the rows of the matrix by the permutation $\sigma^{-1}$. This cancels the factor $\sgn(\sigma) = (-1)^{\mathrm{inv}(\nu)}$, as $\sigma$ is the permutation that sorts $\nu$ in increasing order, hence the sign is given by the number of inversions of $\nu$. The matrix becomes
\[
\det_{1\leq i,j\leq k}(
\dot{e}_{
n - |Ky_{i} + i| + j 
} -
\dot{e}_{
n - |Ky_{i} + i| - j 
}).
\]
We distinguish cases according to the sign of $y_i$ and fix some $1\leq i,j\leq k$.
If $y_i$ is non-negative, we use the formula $\dot{e}_{n+r}=\dot{e}_{n-r}$  to get
\begin{equation*}\dot{e}_{
n - |Ky_{i} + i| + j 
} -
\dot{e}_{
n - |Ky_{i} + i| - j 
}= 
\dot{e}_{
n - Ky_{i} - i + j
} -
\dot{e}_{
n - Ky_{i} - i - j
} =
\dot{e}_{
n + Ky_{i} + i - j
} -
\dot{e}_{
n + Ky_{i} + i + j 
}\,.
\end{equation*}
If $y_i$ is negative, one directly gets
\begin{equation*}
\dot{e}_{
n - |Ky_{i} + i| + j 
} -
\dot{e}_{
n - |Ky_{i} + i| - j 
}=
-(
\dot{e}_{
n + Ky_{i} + i - j
} -
\dot{e}_{
n + Ky_{i} + i + j 
})\,.
\end{equation*}
Pulling this sign out of the determinant using the multilinearity, we obtain an additional factor of $(-1)^{\sum_{j=1}^k [y_j<0]}$. Replacing $y_i$ by $-y_i$ and then transposing the matrix yields
\[
P_{(k^n)}^{\mathrm{BC}_n}(X;t,-\sqrt{t},0)=
\sum_{y \in \Z^k}(-1)^{\vert y\vert}
t^{\sum_{j=1}^k \left( \frac{1}{2}K y_j^2 - jy_j \right)}
\det_{1 \le i,j \le k}
\left(
\dot{e}_{\,n-i+j-K y_j}
-
\dot{e}_{\,n+i+j- Ky_j}
\right).\]
By Lemma \ref{lem: handling the summation range}, the support of this sum is finite, so extending the sum to a finite $S_k$ invariant subset containing the support, by Lemma~\ref{lem:powery}, we conclude the proof.
\end{proof}

\subsection*{Acknowledgements}
AI was used exclusively for proofreading the final text.

\bibliography{Bibliography}
\bibliographystyle{alpha}
\end{document}